\newcommand{\NN}{\mathbb N}
\newcommand{\RR}{\mathbb R}
\newcommand{\QQ}{\mathbb Q}
\newcommand{\ZZ}{\mathbb Z}
\newcommand{\MC}{\mathcal C}
\newcommand{\MD}{\mathcal D}
\newcommand{\MF}{\mathcal F}
\newcommand{\ML}{\mathcal L}
\newcommand{\MM}{\mathcal M}
\newcommand{\MT}{\mathcal T}
\DeclareMathOperator{\cone}{Cone}
\DeclareMathOperator{\supp}{Supp}
\DeclareMathOperator{\sign}{sgn}
\DeclareMathOperator{\nullity}{nullity}
\DeclareMathOperator{\lcm}{lcm}
\theoremstyle{plain}
\newtheorem{theorem}{Theorem}[section]
\newtheorem*{theorem*}{Theorem}
\newtheorem{lemma}[theorem]{Lemma}
\newtheorem{proposition}[theorem]{Proposition}
\newtheorem*{proposition*}{Proposition}
\newtheorem{corollary}[theorem]{Corollary}
\newtheorem{conjecture}[theorem]{Conjecture}
\theoremstyle{definition}
\newtheorem{definition}[theorem]{Definition}
\newtheorem*{definition*}{Definition}
\newtheorem{example}[theorem]{Example}
\theoremstyle{remark}
\newtheorem{remark}[theorem]{Remark}
\newtheorem{question}[theorem]{Question}
\title{Distance Reducing Markov Bases}
\author{Oliver Clarke and Dimitra Kosta}
\address{Dimitra Kosta, School of Mathematics, University of Edinburgh and Maxwell Institute for Mathematical Sciences, United Kingdom }
\email{D.Kosta@ed.ac.uk}
\address{Oliver Clarke, School of Mathematics, University of Edinburgh and Maxwell Institute for Mathematical Sciences, United Kingdom }
\email{oliver.clarke@ed.ac.uk}
\date{}
\begin{document}

\begin{abstract}
    The distance reducing property for Markov bases is an important property that provides a bound on the mixing time of the associated Markov chain. The goal of this project is to understand properties of distance-reducing Markov bases. We explore the distance reducing property for monomial curves and give a complete characterisation of distance reduction in the case of complete intersection monomial curves. Our characterisation carefully uses the notion of gluings for numerical semigroups. We also characterise the distance reducing property for non-complete intersection monomial curves in small dimensions. We also explore the distance irreducible elements: the moves that appear in all distance reducing Markov bases.
\end{abstract}

\maketitle

\tableofcontents

\section{Introduction}

In Algebraic Statistics, Markov bases play an important role in sampling contingency tables and approximating Fisher's exact test with a Markov chain Monte Carlo approach. For background on Algebraic Statistics we refer to \cite{sullivant2018algebraic, drton2009lectures, casanellas2020algebraic}, and for a thorough survey on Markov bases and contingency tables we refer to \cite{aoki2012markovbook}. When the sample size is large, it becomes infeasible to write down every possible contingency table. As an approximation, we may use a Markov chain Monte Carlo method outlined by Diaconis and Sturmfels \cite{diaconis1998algebraic}: start with any sample with the correct marginals; repeatedly apply random moves to obtain a random sample. The set of moves is a fixed Markov basis, which is a generating set of the underlying toric ideal. By definition, a Markov basis produces a connected Markov chain on the fiber of the contingency table. So, after a certain number of moves, the sample produced is close to the stationary distribution of the Markov chain. The number of moves required to achieve this is called the \textit{mixing time}.

It is highly desirable to have a Markov basis with a low mixing time. However, computing the mixing time is often very difficult. So, we would like to find Markov bases that tightly and predictably connect the fibers of the contingency table. This motivates the notion of a \textit{distance reducing} Markov basis. See Definition~\ref{def: distance reducing} and \cite{aoki2005distance}. For distance reducing Markov bases, the minimum number of moves required to connect a pair of points in a fiber is bounded above the distance between them.

There are many known examples of distance reducing Markov bases. For instance, the Graver basis is distance reducing with respect to the $1$-norm. For the homogeneous case, see \cite{aoki2005distance}, and for the general case we prove the following.

\begin{proposition*}[Proposition~\ref{prop: Graver is 1 norm reducing}]
    The Graver basis is strongly distance reducing, even if $A$ is inhomogeneous. 
\end{proposition*}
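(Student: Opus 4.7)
The plan is to adapt the classical proof that the Graver basis is $1$-norm distance reducing, while verifying carefully that homogeneity of $A$ is never invoked. Given distinct nonnegative integer points $u, v$ in the same fiber, so that $Au = Av$, I set $w := u - v \in \ker_{\ZZ}(A)$ and decompose $w = w^+ - w^-$ into its positive and negative parts, which have disjoint supports. The goal is to produce a Graver element $g$ satisfying $u - g \in \NN^n$ and $\|(u - g) - v\|_1 = \|u - v\|_1 - \|g\|_1$; this equality is precisely the strong distance reducing conclusion, after which one can iterate.

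The key tool is Graver's sign-compatible decomposition theorem: every $w \in \ker_{\ZZ}(A)$ admits a representation $w = \sum_i \lambda_i g_i$ with $\lambda_i \in \ZZ_{>0}$ and each $g_i$ in the Graver basis satisfying $g_i^+ \leq w^+$ and $g_i^- \leq w^-$ componentwise. The standard proof of this fact is a Noetherian induction on the componentwise partial order on $\NN^n$ restricted to $\ker_{\ZZ}(A)$; it is a purely lattice-theoretic statement that never uses a grading or homogeneity on $A$, and therefore applies verbatim in our setting. Picking any such $g = g_i$, sign-compatibility forces $u - g \geq 0$ componentwise: on $\supp(w^+)$ we have $g^- = 0$ and $g^+ \leq w^+ \leq u$; on $\supp(w^-)$ we have $g^+ = 0$ and $g^- \geq 0$ only adds to $u$; and outside both supports $g$ vanishes.

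For the reduction of distance, I compute $(u - g) - v = w - g = (w^+ - g^+) - (w^- - g^-)$. Both $w^+ - g^+$ and $w^- - g^-$ are nonnegative (again by sign-compatibility) and are supported on the disjoint supports of $w^+$ and $w^-$ respectively. Hence
\[
\|w - g\|_1 = \|w^+ - g^+\|_1 + \|w^- - g^-\|_1 = \|w^+\|_1 + \|w^-\|_1 - \|g^+\|_1 - \|g^-\|_1 = \|w\|_1 - \|g\|_1,
\]
which gives the strong reduction. The main obstacle is more rhetorical than technical: one must confirm that every ingredient used in the homogeneous argument of \cite{aoki2005distance}, especially the sign-compatible decomposition and the equality $\|(u-g)-v\|_1 = \|u-v\|_1 - \|g\|_1$, is a statement about the lattice $\ker_{\ZZ}(A)$ and the componentwise order on $\NN^n$, not about any grading or degree function on $A$. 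Once this is done, removing the homogeneity hypothesis costs nothing.
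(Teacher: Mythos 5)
Your proof is correct and follows essentially the same route as the paper's: both extract a Graver element conformal (sign-compatible) to $u-v$ — you by citing Graver's decomposition theorem, the paper by iterating proper conformal decompositions with a norm-descent argument — and both then use additivity of the $1$-norm under conformal decomposition to get the strict reduction, noting that the same move is applicable to both endpoints. No homogeneity is used in either argument, exactly as you observe.
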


The family of \textit{strongly robust} toric ideals are those that are minimally generated by their Graver basis. While this may seem like a very strong condition, there are a number of families of examples that exhibit a rich combinatorial structure. See \cite{kosta2023strongly, kosta2024complex, petrovic2018bouquet, sullivant2019strongly, gross2013combinatorial}. For instance, the family of toric ideals of Lawrence type are strongly robust, see \cite[Chapter~7]{sturmfels1996grobner}. For each strongly robust toric ideal, any Markov basis is distance reducing. However, not all distance reducing Markov bases belong to strongly robust toric ideals.

\subsection{Our results}

Typically the Graver basis is very large and impractical to compute, even for some small examples. So it is useful to find methods to determine whether a Markov basis is distance reducing with very few computational steps. In this paper, we characterise the distance reduction property for complete intersection monomial curves in terms of the \textit{circuits}.

\begin{theorem*}[Corollary~\ref{cor: ci dist red iff dist red circuits}]
Let $A \in \ZZ^{1 \times n}$ and assume that $I_A$ is a complete intersection toric ideal. Let $M \subseteq \ker(A)$ be a minimal Markov basis. Then $M$ is distance reducing if and only if $M$ reduces the distance of the circuits of $A$.
\end{theorem*}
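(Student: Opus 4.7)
The ``only if'' direction is immediate: each circuit $c$ of $A$ gives rise to a pair $(c^+, c^-)$ of lattice points in a common fiber, so any $M$ that is distance reducing on all fiber pairs is in particular distance reducing on circuits.

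For the ``if'' direction, my plan is to induct on $n$, using the characterization of complete intersection monomial curves as iterated gluings (Delorme's theorem), a tool the paper has certainly set up in its section on gluings. The base case $n=2$ is immediate, since the minimal Markov basis consists of a single element which is the unique circuit. For $n \geq 3$, I would write $A = A_1 \sqcup A_2$ according to the outermost gluing, so that $I_{A_1}$ and $I_{A_2}$ are themselves complete intersection monomial curve ideals on strictly fewer generators, and the minimal Markov basis decomposes as $M = M_1 \cup M_2 \cup \{g\}$ with $M_i$ a minimal Markov basis of $I_{A_i}$ and $g$ the gluing binomial. Circuits of $A_i$, padded with zeros, are circuits of $A$, so the hypothesis transfers: each $M_i$ reduces distance on circuits of $A_i$, and hence by induction is distance reducing for all $A_i$-fibers.

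Given $x,y$ in a common fiber of $A$, split $x = (x_1, x_2)$ and $y = (y_1, y_2)$ along the gluing. The equation $A_1(y_1 - x_1) = A_2(x_2 - y_2)$ forces the common value to be an integer multiple of the gluing element $d$. If this multiple is zero, then $(x_i, y_i)$ lies in the same $A_i$-fiber for both $i$, and the inductive hypothesis yields a distance-reducing move in $M_1 \cup M_2$. The main obstacle is the mixed case where the multiple is nonzero and the gluing binomial $g$ is genuinely needed. Since $g$ has support of size equal to the sum of the generator counts of the two glued pieces, it is not a circuit in general, so distance reduction on $g$ does not follow directly from the hypothesis. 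My plan is to use distance reduction on the \emph{cross-circuits} --- the $2$-element-support circuits with one column from $A_1$ and one from $A_2$ --- to force a sign compatibility between the coordinates of $y_1 - x_1$ and $x_2 - y_2$ relative to the gluing coefficients (writing $g = (u, -v)$ with $A_1 u = A_2 v = d$), so that $g$ or $-g$ itself strictly reduces the distance. Making this step rigorous, likely by choosing a minimal counterexample $(x,y)$ and extracting from it a circuit on which $M$ fails to reduce distance, thereby contradicting the hypothesis, is the technical heart of the argument.
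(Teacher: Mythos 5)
The ``only if'' direction and the overall idea of inducting along the gluing are fine, but there are two concrete gaps in the ``if'' direction. First, the claim that ``the hypothesis transfers: each $M_i$ reduces distance on circuits of $A_i$'' is not automatic. A circuit supported inside the block of $A_1$ is reduced by \emph{some} element of $M$, but that element can be the gluing move $g=(u,-v)$ rather than an element of $M_1$: if $\supp(g^+)$ is a single coordinate $\{i\}$ with $g_i>\|g^-\|$, then $g$ reduces the distance of circuits $z_{i,j}$ with both $i,j$ in the first block. This is exactly the issue the paper confronts in Lemmas~\ref{lem: non reducible coordinate} and \ref{lem: w cannot reduce circuits on one side}, whose conclusion is only that the transfer works on \emph{one} of the two sides (the side depending on the support of $g$); the other side has to be handled by a separate combinatorial argument (decorated gluing trees and the ``sign game''), which is what forces the gluing to be of the first kind (Theorem~\ref{thm: dist red ci implies first kind}). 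Second, your plan for the mixed case --- using cross-circuits to force $\pm g$ itself to reduce the distance --- cannot succeed, because in the mixed case the reducing move is frequently \emph{not} $g$. Already for $n=3$ with $M=\{b,c\}$ and $z=\alpha b+\beta c$ with $\alpha,\beta>0$, the proof of Theorem~\ref{thm: dim3 ci} reduces $z$ using the circuit $b$ (via $b_1>b_2$), and $c=g$ need not reduce $z$ at all. In general the identity of the reducing move depends on the sign pattern of the coefficients of $z$ in terms of $M$, which is why the paper encodes the answer as a disjunction of inequalities (the conditions $R_{i,j}$ of Definition~\ref{def: first kind Rij conditions}) and proves sufficiency by the delicate case analysis of Theorem~\ref{thm: Rij implies M dist red}.

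For comparison, the paper's route is: circuit-reduction forces $A$ to admit a gluing of the first kind (a ``caterpillar'' gluing tree), and for such curves an explicit system of inequalities on the entries of $M$ is equivalent to both circuit-reduction and full distance reduction (Theorem~\ref{thm: characterisation dist red first kind}). Your direct induction could in principle be completed, but to close it you would need to (a) prove the one-sided transfer statement of Lemma~\ref{lem: w cannot reduce circuits on one side} rather than the two-sided one you assert, and (b) replace ``$\pm g$ reduces the distance'' in the mixed case with a case analysis over which basis element reduces, i.e.\ essentially re-derive the $R_{i,j}$ conditions. Note also that any argument must use the complete intersection hypothesis in an essential way: Example~\ref{example: dim4 reduces circuits but not reducing} shows that circuit-reduction does not imply distance reduction for general monomial curves.
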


In the above, a circuit refers to an element $z \in \ker(A)$ of the form
\[
    z = (0, \dots, 0, z_i, 0, \dots, 0, -z_j, 0, \dots, 0),
\]
i.e., an element supported on exactly two coordinates. This classification shows that to check the distance reduction property for monomial curves, it suffices to check that a Markov basis reduces the distance of the circuits. The circuits form a very small subset of the Graver basis, so this result is a significant improvement over checking the entire Graver basis, see Theorem~\ref{thm: 1 norm reducing test via Graver basis} below.

For non complete intersection, we give a similar characterisation. For monomial curves in $\mathbb A^3$, we use Herzog's classification of Markov bases \cite{Herzog1970} to prove the following.

\begin{theorem*}[Theorems~\ref{thm: dim3 all}, \ref{thm: dim3 ci} and \ref{thm: dim3 nci}]
Let $A \in \ZZ^{1 \times 3}$ and $M$ a minimal Markov basis for $A$. Then $M$ is distance reducing if and only if $M$ reduces the distance of the circuits of $A$.
\end{theorem*}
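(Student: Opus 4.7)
The plan is to establish the nontrivial direction: that distance reduction of the circuits implies distance reduction of every element of $\ker(A)$. The other direction is immediate, since every circuit lies in $\ker(A)$.

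I would split by whether $I_A$ is a complete intersection. The CI case, corresponding to Theorem~\ref{thm: dim3 ci}, follows directly from the already-proved Corollary~\ref{cor: ci dist red iff dist red circuits} applied to $n=3$. Thus the bulk of the work is the non-CI case of Theorem~\ref{thm: dim3 nci}, after which Theorem~\ref{thm: dim3 all} is just the disjunction of the two.

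For the non-CI case with $A = (n_1, n_2, n_3)$, I would invoke Herzog's classification: up to sign the minimal Markov basis is a uniquely determined triple $M = \{m_1, m_2, m_3\}$, where each $m_i$ has the shape of a single positive entry balanced against two negative entries,
\[
m_i \;=\; c_i\, e_i \,-\, c_{ij}\, e_j \,-\, c_{ik}\, e_k,
\]
with positive integers satisfying $c_i n_i = c_{ij} n_j + c_{ik} n_k$ and a prescribed syzygy expressing one $m_i$ as a combination of the other two. In parallel I would write down the three circuits $C_{ij} = \tfrac{n_j}{d_{ij}} e_i - \tfrac{n_i}{d_{ij}} e_j$ where $d_{ij} = \gcd(n_i, n_j)$, and translate the hypothesis ``$M$ reduces the distance of circuits'' into concrete numerical inequalities between the $c_i, c_{ij}$ and the circuit entries $n_j / d_{ij}$.

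Given a nonzero $z \in \ker(A)$ I would show that some $\pm m_i$ reduces the distance of $z$, splitting by $|\supp(z)|$. When $|\supp(z)| \leq 2$, the element $z$ is a positive integer multiple of a circuit, and the hypothesis plus iterated application handles it. When $|\supp(z)| = 3$, the three possible sign patterns of $z$ correspond bijectively to the three $m_i$ (each of which has a distinct ``lone positive'' coordinate up to overall sign), which identifies a natural candidate move; the Herzog relations then bound the positive entries of $z$ from below sufficiently to guarantee the move can be applied while strictly decreasing the $1$-norm.

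The main obstacle I expect is the full-support case: even when the sign pattern of $z$ matches some $m_i$, the relevant positive entry of $z$ may be too small to accommodate $c_i$, and one must use the syzygy among the $m_j$ to switch to a different generator. Making this redistribution rigorous, and showing it terminates with a strict $1$-norm decrease, is where I anticipate most of the technical work, and it is also where Herzog's specific structural constraints on the $c_i$ and $c_{ij}$ (rather than merely the cardinality of $M$) become essential.
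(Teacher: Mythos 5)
Your overall architecture matches the paper's: one direction is trivial, the CI and non-CI cases are separated, and the non-CI case rests on Herzog's classification together with the syzygy $g_1+g_2+g_3=0$ (Proposition~\ref{prop: 3 dim nci gi sum to zero}). Your shortcut for the CI case --- citing Corollary~\ref{cor: ci dist red iff dist red circuits} at $n=3$ --- is logically admissible (the machinery of Sections~\ref{sec: monomial curves first kind} and \ref{sec: ci mon curves} does not depend on Theorem~\ref{thm: dim3 ci}, and every CI in $\mathbb A^3$ is of the first kind), but it imports the heavy general theory where the paper gives a short self-contained argument producing the explicit criterion $c_1 < c_2 + c_3$; either route is fine for the stated equivalence.

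The gap is in the full-support step of the non-CI case, which you defer rather than resolve, and your diagnosis of the obstacle is slightly off. After using the syzygy to write $z = \beta g_2 + \gamma g_3$, applicability is not the real problem: the sign analysis of $\beta,\gamma$ directly yields lower bounds such as $z_2 \ge c_2$ and $z_3 \ge v_{23}$, so the candidate moves are always applicable. The crux is forcing a \emph{strict} $1$-norm decrease, and the two ingredients you are missing are: (a) ordering $a_1<a_2<a_3$ so that the relation $c_1 a_1 = v_{12}a_2+v_{13}a_3$ gives $c_1 > v_{12}+v_{13}$, whence $g_1$ reduces the distance of $z$ \emph{unconditionally} whenever applicable --- this disposes of the entire case $\beta,\gamma>0$ without touching the circuit hypothesis; and (b) the precise translation of the circuit hypothesis into the disjunction ``$v_{21}<c_2+v_{23}$ or $v_{31}<v_{32}+c_3$'', which is exactly what is needed in the remaining case $\beta>0,\gamma<0$ to choose between $g_2$ and $g_3$ (the sign pattern of $z$ alone does not pick the right generator, so your proposed bijection between sign patterns and the $m_i$ is not the selection mechanism). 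Finally, for $|\supp(z)|\le 2$ you should replace ``iterated application'' by a single appeal to Lemma~\ref{lem: 1-norm reduction and conformal decomposition}: a multiple $kC$ of a circuit $C$ decomposes conformally as $C + (k-1)C$, so any move reducing $C$ reduces $kC$.
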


Similarly for monomial curves in $\mathbb A^4$, we characterise the distance reduction property whenever the curve admits a \textit{gluing}.

\begin{theorem*}[Theorem~\ref{thm: dim4 nci glue}]
Let $A \in \ZZ^{1 \times 4}$ and $M \subseteq \ker(A)$ a minimal Markov basis for $A$. Assume that $A$ admits a gluing. Then $M$ is distance reducing if and only if $M$ reduces the distance of the circuits of $A$.
\end{theorem*}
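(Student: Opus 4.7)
The ``only if'' direction is immediate since every circuit lies in $\ker(A)$. For the converse, the plan is to exploit the gluing to reduce to results already in hand for lower dimensions. After relabelling, the gluing hypothesis partitions the generators as $A = (A_1, A_2)$ with $(|A_1|, |A_2|) \in \{(1,3), (2,2)\}$. The standard theory of gluings of numerical semigroups gives a decomposition
\[
    M = \widetilde{M}_1 \cup \widetilde{M}_2 \cup \{g\},
\]
where $\widetilde{M}_i$ is the lift of a minimal Markov basis $M_i$ of $A_i$, supported only on the $A_i$-coordinates, and $g$ is the gluing relator, whose support meets both blocks.

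The first step is to descend circuit reduction to each piece. Any circuit of $A_i$ lifts to a circuit of $A$ supported entirely in the $A_i$-coordinates, and among the moves in $M$ only those in $\widetilde{M}_i$ can reduce the distance of such a pair without introducing mass on coordinates outside its support. So $M_i$ reduces the circuit distances of $A_i$. Applying Theorem~\ref{thm: dim3 all} when $|A_i| = 3$ and Corollary~\ref{cor: ci dist red iff dist red circuits} when $|A_i| \leq 2$ (the toric ideal of $A_i$ is principal, hence a complete intersection), I conclude that each $M_i$ is distance reducing for $A_i$.

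Next, given $u, v$ in a common $A$-fiber, I would write $u = (u^{(1)}, u^{(2)})$ and $v = (v^{(1)}, v^{(2)})$. If $A_i u^{(i)} = A_i v^{(i)}$ holds for both $i$, then $u^{(i)}$ and $v^{(i)}$ lie in the same $A_i$-fiber, and a move in $\widetilde{M}_1 \cup \widetilde{M}_2$ reduces $\|u - v\|_1$ by the previous step. Otherwise, the gluing property forces $A_1 u^{(1)} - A_1 v^{(1)}$ to be a nonzero integer multiple of the gluing degree, so $g$ (in one of its two orientations) is applicable at $u$.

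The main obstacle is this last case: showing that a single application of $\pm g$ strictly decreases $\|u - v\|_1$. Since $g$ has mixed signs and touches both blocks, the decrease is not automatic. The plan here is to use the circuit-reducing hypothesis on the \emph{straddling circuits} of $A$, those supported on one coordinate from each block: such circuits cannot be reduced by any element of $\widetilde{M}_1 \cup \widetilde{M}_2$, so their reducibility by $M$ must come from $g$, and this pins down a favourable sign balance for $g$ relative to the coordinates where $u$ and $v$ disagree. A case analysis along the two gluing shapes $(1,3)$ and $(2,2)$, combined with the distance reducing property of each $M_i$ to mop up residual disagreements after applying $g$, should complete the argument, and this is where I expect the technical heart of the proof to live.
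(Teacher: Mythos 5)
Your reduction of the complete intersection cases (gluing shapes $(2,2)$ and $(1,3)$ with the $3$-block a complete intersection) to Corollary~\ref{cor: ci dist red iff dist red circuits} is correct and matches the paper. The problem is the remaining case, a gluing $A = A' \circ a_4$ with $A'$ a non-complete-intersection $3$-block, and here two of your key claims are false as stated. First, you assert that a circuit supported inside one block can only be distance reduced by moves of that block, "without introducing mass on coordinates outside its support." Reducing the $1$-norm does not require avoiding new support: the gluing relator $h = (h_1, 0, 0, -h_4)$ with $h_1 > h_4$ reduces the circuit $(z_1, -z_2, 0, 0)$ while putting mass on coordinate $4$. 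So the descent of circuit reduction to $A'$ does not follow from your support argument; it is true, but it is precisely the content of Proposition~\ref{prop: dim4 nci reduces circuits implies reduces dim3 circuits}, whose proof is a genuine contradiction argument chaining together three different circuits. Second, you claim that a "straddling" circuit (one coordinate in each block) can only be reduced by $g$, and you plan to use this to pin down a sign balance for $g$. This is also false: in the paper's notation, $g_2 = (v_{21}, -c_2, v_{23}, 0)$, supported entirely in the first block, reduces the straddling circuit $(0, z_2, 0, -z_4)$ exactly when $c_2 > v_{21} + v_{23}$ — and this is one of the two alternatives in condition (ii) of Theorem~\ref{thm: dim4 nci glue}. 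So the favourable sign condition on $g$ that you hope to extract from straddling circuits need not hold at all.

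Beyond these two incorrect claims, the part you label the "technical heart" is deferred rather than done, and the mechanism you propose for it is not the one that works. It is not true that a single application of $\pm g$ reduces $\|u-v\|_1$ whenever the block-fibers disagree; in the paper's argument the reducing move in the mixed case may be $h$, $g_1$, $g_2$, or $g_3$ depending on which alternative of the extracted inequalities (i)–(iii) holds, and identifying the right move requires writing $z = \alpha h + \beta g_1 + \gamma g_2 + \delta g_3$, normalizing the coefficients via the relation $g_1 + g_2 + g_3 = 0$ from Proposition~\ref{prop: 3 dim nci gi sum to zero}, and running a several-branch case analysis (Propositions~\ref{prop: dim4 nci glue forward} and \ref{prop: dim4 nci glue reverse}). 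Also note that applicability of $g$ at $u$ does not follow merely from $A_1 u^{(1)} - A_1 v^{(1)}$ being a nonzero multiple of the gluing degree; one needs coordinatewise domination of $g^+$ or $g^-$, which in the paper is secured by the special form $h = (h_1, h_2, h_3, -h_4)$ with $h^-$ supported on the single glued coordinate. As it stands the proposal is a plausible outline with its two load-bearing intermediate claims wrong and its hardest step unexecuted.
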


A gluing is a operation that arises in the classification of complete intersection toric ideals \cite{rosales1997presentations, morales2005complete}. The proofs of the above results carefully use the properties of Markov bases that arise from glued monomial curves.

There are many important families of moves related to Markov bases. For instance: the \textit{indispensables} $S(A)$ are the moves that belong to every Markov basis; the \textit{Graver basis} $G(A)$ is a Markov basis given by the set of \textit{primitive} elements; and the \textit{Universal Markov basis} $M(A)$ is the union of all minimal Markov bases. These sets also admit algebraic descriptions in terms of certain decompositions \cite{charalambous2014markov}.

The distance-reducing analogue of the indispensables is the set of \textit{distance irreducible elements}, which are the elements that appear in every distance reducing Markov basis. In Section~\ref{sec: dist irred elements}, we define and investigate the relationships between the families of moves $S(A)$, $M(A)$, $G(A)$ above, and their distance-reducing analogues: weakly distance irreducibles $D^w(A)$, distance irreducibles $D(A)$, universal strongly distance reducing Markov basis $\MD^s(A)$, and universal distance-reducing Markov basis $\MD(A)$.

\begin{proposition*}[Proposition~\ref{prop: dist irred chain inclusions}]
The following chain of inclusions holds:
\[
    S(A) \subseteq D(A) \subseteq D^w(A) \subseteq G(A).
\]
\end{proposition*}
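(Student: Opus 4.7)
The plan is to verify each of the three containments directly from the definitions, using two inputs: the hierarchy that every strongly distance reducing Markov basis is distance reducing and every distance reducing Markov basis is in particular a Markov basis, together with Proposition~\ref{prop: Graver is 1 norm reducing}, which asserts that the Graver basis $G(A)$ is itself a strongly distance reducing Markov basis.

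For $S(A) \subseteq D(A)$, I would observe that the family of minimal distance reducing Markov bases sits inside the family of all minimal Markov bases. Taking intersections reverses the inclusion, so an element forced to lie in every minimal Markov basis is automatically forced to lie in every minimal distance reducing one. The same structural argument one level down gives $D(A) \subseteq D^w(A)$: since every strongly distance reducing Markov basis is distance reducing, the family of minimal strongly distance reducing Markov bases is contained in the family of minimal distance reducing ones, and monotonicity of the ``intersection over the family'' operation yields the inclusion.

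For $D^w(A) \subseteq G(A)$, I would use Proposition~\ref{prop: Graver is 1 norm reducing} to produce a minimal strongly distance reducing Markov basis contained in $G(A)$: start with $G(A)$ itself, which is strongly distance reducing, and prune by iteratively removing an element whenever the remainder is still a strongly distance reducing Markov basis, until no further removal is possible. Since pruning only removes elements, the resulting minimal strongly distance reducing Markov basis lies inside $G(A)$. Any element of $D^w(A)$ must appear in this particular basis, and hence in $G(A)$.

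I do not expect a substantive obstacle: once the chain of implications among the three basis properties is fixed and the Graver basis is known to be strongly distance reducing, each of the three inclusions follows formally by unwinding the relevant ``irreducibility'' definition. The only point to watch is the standard direction-reversal, namely that shrinking the underlying family of Markov bases enlarges the intersection of elements forced to appear in every basis of that family.
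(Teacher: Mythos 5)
Your route is genuinely different from the paper's: you work with the characterisations of $S(A)$, $D(A)$, $D^w(A)$ as intersections over families of Markov bases, whereas the paper argues directly from the decomposition definitions in Definition~\ref{def: distance irreducible elements} (a conformal decomposition is both a positive and a negative distance decomposition, giving $D^w(A)\subseteq G(A)$; a positive or negative distance decomposition is semi-conformal, giving $S(A)\subseteq D(A)$; and $D(A)=D^+(A)\cap D^-(A)\subseteq D^+(A)\cup D^-(A)=D^w(A)$ is immediate). Your pruning argument for $D^w(A)\subseteq G(A)$ is fine, and legitimately uses Proposition~\ref{prop: Graver is 1 norm reducing} together with the (earlier, independently proved) identity $D^w(A)=\bigcap_{B\in\mathcal B^s}B$; the paper's argument has the advantage of not needing that identity or the Graver basis result at all.

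However, two of your family containments are false as stated. First, the family of minimal(ly) distance reducing Markov bases does \emph{not} sit inside the family of minimal Markov bases — the paper explicitly warns that a minimally distance reducing Markov basis need not be a minimal Markov basis. This is repairable because $S(A)$ is the intersection of \emph{all} Markov bases, and every minimally distance reducing set is a Markov basis by Proposition~\ref{prop: B d-reducing => Markov basis}. Second, and more seriously, the family of minimal strongly distance reducing Markov bases is \emph{not} known to be contained in the family of minimal distance reducing ones: a minimal strongly distance reducing set may have a proper subset that is still distance reducing (just not strongly), so it need not be minimally distance reducing. This is precisely the subtlety the paper flags in Remark~\ref{rmk: dist irred poset} when discussing whether $\MD(A)\subseteq\MD^s(A)$. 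Your inclusion $D(A)\subseteq D^w(A)$ can be rescued either by observing that every $B\in\mathcal B^s$ is in particular distance reducing and hence contains $D(A)$ (each such $B$ contains \emph{some} member of $\mathcal B$, which is all the monotonicity argument needs), or, far more simply, by reading off $D^+(A)\cap D^-(A)\subseteq D^+(A)\cup D^-(A)$ from the definitions, as the paper does.
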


If $A$ has a unique minimal Markov basis then we may relate the universal Markov basis to the distance irreducible elements as follows.

\begin{proposition*}[Proposition~\ref{prop: unique min Markov dist irred sets}]
    If $A$ has a unique distance-reducing minimal Markov basis, then we have
    \[
    S(A) = D(A) = M(A) = \mathcal D(A).
    \]
\end{proposition*}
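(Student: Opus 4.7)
The plan is to let $M$ denote the minimal Markov basis whose uniqueness is assumed (reading the hypothesis as: the minimal Markov basis of $A$ is unique and happens to be distance-reducing). The strategy is then a clean definition-chase: show $M$ equals each of the four sets $S(A)$, $D(A)$, $M(A)$, $\mathcal D(A)$ individually. The non-distance-reducing pair $S(A) = M(A) = M$ is essentially immediate: since $M$ is the only minimal Markov basis, the union $M(A)$ of all minimal Markov bases is just $M$; and every Markov basis contains a minimal one (which must be $M$), so every element of $M$ lies in every Markov basis, forcing $M \subseteq S(A)$, while the reverse inclusion $S(A) \subseteq M$ holds because $M$ is itself a Markov basis.

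The second stage is to establish that $M$ is in fact the unique \emph{minimal distance-reducing} Markov basis. To see minimality among distance-reducing Markov bases, I would argue that if some $M' \subsetneq M$ were distance-reducing, then $M'$ would in particular be a Markov basis, contradicting the minimality of $M$ as a Markov basis. For uniqueness, suppose $N$ is any minimal distance-reducing Markov basis. Since $N$ is a Markov basis, it contains a minimal Markov basis, which by hypothesis must be $M$; thus $M \subseteq N$. But $M$ is already distance-reducing, so the minimality of $N$ among distance-reducing Markov bases forces $N = M$.

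With this in hand, the two distance-reducing invariants follow. For $\mathcal D(A)$, being the union of all minimal distance-reducing Markov bases, uniqueness yields $\mathcal D(A) = M$. For $D(A)$: any distance-reducing Markov basis $B$ contains a minimal distance-reducing sub-basis, which is $M$, so $M \subseteq \bigcap_B B = D(A)$, and the reverse inclusion holds because $M$ itself is a distance-reducing Markov basis. Assembling all four identifications gives $S(A) = D(A) = M(A) = \mathcal D(A) = M$.

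The only real obstacle is sorting out the interpretation of ``unique distance-reducing minimal Markov basis'' and tracking the difference between ``minimal Markov basis'' and ``minimal distance-reducing Markov basis''; these are not a priori the same notion, since a distance-reducing Markov basis that is inclusion-minimal among distance-reducing Markov bases need not be a minimal Markov basis in general. The key lemma doing the work in Step~2 is the standard fact that every Markov basis contains some minimal Markov basis, combined with the uniqueness hypothesis, which collapses this subtlety in our setting.
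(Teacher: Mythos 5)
Your proof is correct, and its overall architecture is the same as the paper's: identify $S(A) = M(A) = M$, show that $M$ is the unique minimal distance-reducing Markov basis, and then read off $\mathcal D(A)$ and $D(A)$ as the union and intersection of the (single) minimal distance-reducing Markov basis. The one genuine difference is the lemma used to force $M \subseteq N$ for an arbitrary minimal distance-reducing Markov basis $N$: you extract a minimal Markov basis from $N$ (necessarily $M$, by uniqueness), whereas the paper routes this containment through $S(A) \subseteq D(A) \subseteq N$, i.e.\ through Proposition~\ref{prop: dist irred chain inclusions} together with the fact that $D(A)$ lies in every distance-reducing set, and correspondingly concludes $D(A) = M$ from the sandwich $S(A) \subseteq D(A) \subseteq \mathcal D(A) = S(A)$ rather than from the characterisation of $D(A)$ as the intersection of all minimal distance-reducing Markov bases. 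Your variant is marginally more elementary for that step, at the cost of invoking the standard (and implicitly used throughout the paper) fact that every Markov basis contains a minimal one; both arguments silently treat Markov bases up to sign. Finally, your reading of the hypothesis --- a unique minimal Markov basis that happens to be distance reducing --- is exactly what Proposition~\ref{prop: unique min Markov dist irred sets} states and proves in the body of the paper.
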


We also show that there are always finitely many minimal distance reducing Markov bases.

\begin{theorem*}[Corollary~\ref{cor: universal dist red bases are finite}]
    The sets $\MD(A)$ and $\MD^s(A)$ are finite.
\end{theorem*}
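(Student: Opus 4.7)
My plan is to deduce finiteness from the 1-norm reducing test via the Graver basis (Theorem~\ref{thm: 1 norm reducing test via Graver basis}), which recasts the distance-reducing property of a Markov basis $M$ as a hitting-set condition indexed by the finitely many Graver basis elements.

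For each $g \in G(A)$, I would define the set $R(g) \subseteq \ker(A)$ of moves that reduce the $1$-distance in one step when applied at $g^-$ targeting $g^+$; that is, $m \in R(g)$ iff $g^- + m \in \NN^n$ and $|m - g|_1 < |g|_1$ (with the analogous condition for $-m$). By the triangle inequality $|m|_1 = |(m - g) + g|_1 \leq |m - g|_1 + |g|_1 < 2|g|_1$, so every such $m$ lies in a finite lattice box and $R(g)$ is finite. Since $G(A)$ itself is finite by a classical theorem of Graver, the union $R := \bigcup_{g \in G(A)} R(g)$ is finite.

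Theorem~\ref{thm: 1 norm reducing test via Graver basis} then says that a Markov basis $M$ is distance-reducing if and only if $M \cap R(g) \neq \emptyset$ for every $g \in G(A)$, i.e.\ $M$ is a hitting set for the family $\{R(g)\}_{g \in G(A)}$. If $M$ is a minimal distance-reducing Markov basis and $m \in M$ lies outside $R$, then removing $m$ preserves every intersection $M \cap R(g)$, so $M \setminus \{m\}$ would still be distance-reducing, contradicting minimality. Therefore every minimal distance-reducing Markov basis is contained in $R$, giving $\MD(A) \subseteq R$, and in particular $\MD(A)$ is finite. The identical argument, applied to the strongly distance-reducing version of the test with its corresponding (conformal) reducer sets $R^s(g) \subseteq R(g)$, shows that $\MD^s(A)$ is finite.

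The main obstacle is pinning down the precise form of the reducer sets $R(g)$ coming out of Theorem~\ref{thm: 1 norm reducing test via Graver basis} and confirming the $1$-norm bound; once this is done, the hitting-set interpretation makes the finiteness conclusion formal. A subtler issue is the implicit requirement that a minimal hitting set in this argument be a Markov basis in its own right, which follows because any subset that is distance-reducing is automatically a Markov basis.
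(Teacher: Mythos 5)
Your proposal is correct and follows essentially the same route as the paper: both reduce the distance-reducing condition to the finite Graver basis via Theorem~\ref{thm: 1 norm reducing test via Graver basis}, use the triangle inequality to bound the $1$-norm of any move that can reduce a Graver element (your bound $\|m\|<2\|g\|$ is the contrapositive form of the paper's $\|z\|>2\max(G)$ estimate), and then invoke minimality to conclude that every element of a minimal (strongly) distance-reducing Markov basis must reduce some Graver element and hence lies in a fixed finite set. The hitting-set phrasing is a cosmetic repackaging of the same argument, and your closing remarks (applicability of $-m$, and the fact that distance-reducing sets are automatically Markov bases) address exactly the points the paper leaves implicit.
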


\subsection{Paper outline}
In Section~\ref{sec: prelim} we fix our notation for: Markov bases; distance reduction, see Definition~\ref{def: distance reducing}; and gluings in Section~\ref{sec: prelim ci and gluing}. 

Sections~\ref{sec: dim3 monomial curves}-\ref{sec: dim4 mon curves} are about the distance reduction property for monomial curves. In Section~\ref{sec: dim3 monomial curves}, we characterise the distance reduction property of monomial curves in $\mathbb A^3$. Our main results are Theorem~\ref{thm: dim3 ci} for complete intersections and Theorem~\ref{thm: dim3 nci} for non complete intersections. In Section~\ref{sec: monomial curves first kind} we characterise the distance reduction property for a special kind of symmetric complete intersection monomial curves given by Theorem~\ref{thm: characterisation dist red first kind}. In Section~\ref{sec: ci mon curves}, we prove Theorem~\ref{thm: dist red ci implies first kind}, which shows that distance reducing Markov bases of complete intersection monomial curves are exactly those in Section~\ref{sec: monomial curves first kind}. In Section~\ref{sec: dim4 mon curves}, we focus on monomial curves in $\mathbb A^4$. We make explicit the characterisation complete intersections in Corollary~\ref{cor: dim 4 markov basis type 211} and  characterise the distance reducing Markov bases of glued non complete intersections in Theorem~\ref{thm: dim4 nci glue}.

In Section~\ref{sec: generalisation of 1 norm results} we generalise results from \cite{aoki2005distance} to the inhomogeneous case and prove that a Markov basis is distance reducing if and only if it reduced the distance of the Graver basis. See Theorem~\ref{thm: 1 norm reducing test via Graver basis}. In Section~\ref{sec: dist irred elements}, we report on the \textit{distance irreducible elements}, which are the elements that appear in every distance reducing Markov basis. In Section~\ref{sec: discussion} we discuss possible generalisations of our results and further questions. In particular, in Section~\ref{sec: discussion dist red complex} we introduce the distance reducing complex for studying families of metrics.

\section{Preliminaries}\label{sec: prelim}

\noindent
\textbf{Notation.}
Throughout, we write $\NN := \{0, 1, 2, \dots \}$ for the set of non-negative integers and $[n] := \{1, 2, \dots, n \}$ for the integers from $1$ to $n$.
For any set $S$, such as $\NN$, $\ZZ$ or $\QQ$, we take elements of $S^n$ to be column vectors. 
Given a set of positive integers $X = \{x_1, x_2, \dots, x_n\}$, we write
$\NN X \subseteq \NN$ for the numerical semigroup generated by $X$ and $\ZZ X \subseteq \ZZ$ for the subgroup generated by $X$.

\smallskip
\noindent \textbf{Affine semigroups.}
Throughout, we consider matrices $A \in \ZZ^{d \times n}$, which, unless specified otherwise, we assume satisfy $\ker(A) \cap \NN^n = \{0\}$. For any vector $u \in \ZZ^n$ we define $u^+$ and $u^-$ to be the unique vectors in $\NN^n$ such that $u = u^+ - u^-$. We denote by $(u)_i$ is the $i$-th coordinate of $u$ and $u_i = |(u)_i|$ its absolute value. Let $R = K[x_1, \dots, x_n]$ be a polynomial ring and recall the multi-index notation for monomials $x^u := x_1^{u_1}x_2^{u_2} \cdots x_n^{u_n} \in R$ for any $u = (u_1, \dots, u_n)^T \in \NN^n$. The columns $a_1, \dots, a_n$ of $A$ generate the affine semigroup \[
\NN A := \left\{\sum_{i \in [n]} \lambda_i a_i \colon \lambda_i \in \NN \right\} \subseteq \NN^d.
\]
For each $t \in \NN A$, we write $\MF_t = \{ u \in \NN^n \colon Au = t \}$ for the \textit{fiber} of $A$. In addition, given any element $v \in \NN^n$, we write $\MF(v) := \MF_{Av}$ for the fiber containing $v$. We denote by $\MF(A) = \{\MF_t \colon t \in \NN A \}$ the collection of all fibers of $A$. 

\smallskip 
\noindent \textbf{Markov bases.}
The \textit{toric ideal} of $A$ is defined to be $I_A = \langle x^{u^+} - x^{u^-} \colon u \in \ker(A) \rangle \subseteq R$. A \textit{Markov basis} is a set $B \subseteq \ker(A)$ such that $I_A = \langle x^{u^+} - x^{u^-} \colon u \in B\rangle$. A Markov basis is \textit{minimal} if no proper subset is a Markov basis. The union of all minimal Markov bases is the \textit{Universal Markov basis}, denoted $M(A)$. The intersection of all Markov bases $S(A)$ is called the set of indispensable elements. The natural partial ordering on $\NN^n$ is given by $u \le v \iff u_i \le v_i$ for all $i \in [n]$. An element $z \in \ker(A)$ is called \textit{primitive} if $y \in \ker(A)$ with $y^+ \le z^+$ and $y^- \le z^-$ then $y = z$. The set of all primitive elements of $\ker(A)$ is the \textit{Graver basis} $G(A)$ of $A$.

\medskip
\noindent
\textbf{Decompositions.} Let $z \in \ker(A)$. A decomposition of $z$ is an expression for $z$ as a sum of elements of $\ker(A)$. We say that a decomposition is proper if no summands is zero. Below we describe the decompositions that appear throughout the paper.

\smallskip
\indent
\textit{Conformal decomposition:} $z = u+v$ such that $z^+ = u^+ + v^+$ and $z^- = u^- + v^-$.

\smallskip
\indent
\textit{Semi-conformal decomposition:} $z = u+v$ such that $u_i > 0  \implies v_i \ge 0$.

\smallskip
\noindent
The indispensable elements $S(A)$ and the elements of the Graver basis $G(A)$ are completely characterised by the above decompositions.

\begin{theorem}[{\cite{sturmfels1996grobner}, \cite[Proposition~1.1]{charalambous2014markov}}]
    We have the following:
    \begin{itemize}
        \item $z \in G(A)$ if and only if $z \in \ker(A)$ has no proper conformal decomposition,
        \item $z \in S(A)$ if and only if $z \in \ker(A)$ has no proper semi-conformal decomposition. 
    \end{itemize}
\end{theorem}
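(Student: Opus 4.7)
The plan is to prove the two equivalences separately, but with parallel structure: in each, the ``no proper decomposition'' side uses the hypothesis to forbid witnesses to non-extremality, and the converse manufactures such a decomposition from a witness. For the Graver characterization, I would first suppose $z$ has no proper conformal decomposition. Given any nonzero $y \in \ker(A)$ with $y^+ \le z^+$ and $y^- \le z^-$, I would set $u = y$ and $v = z - y$; since $y^+$ sits inside $z^+$ and $y^-$ inside $z^-$, and $z^+, z^-$ have disjoint supports, a short coordinate check shows $(z-y)^+ = z^+ - y^+$ and $(z-y)^- = z^- - y^-$, so $z = u + v$ is conformal. The hypothesis then forces $v = 0$, i.e.\ $y = z$, as required for primitivity. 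The converse is immediate: if $z = u + v$ is a proper conformal decomposition, then $u$ is nonzero with $u^\pm \le z^\pm$, so primitivity forces $u = z$ and $v = 0$, contradicting properness.

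For the indispensable characterization, the forward direction (no proper semi-conformal decomposition implies $z \in S(A)$) goes by contrapositive. Assuming some Markov basis $B$ avoids $\pm z$, I would take a walk $z^+ = w_0, w_1, \ldots, w_k = z^-$ in the fiber $\MF(z^+)$ with steps $w_i - w_{i-1} = \pm b_i$ for $b_i \in B$ and every $w_i \in \NN^n$, and then set $u = z^+ - w_1 \in \ker(A)$ and $v = z - u = w_1 - z^-$. Both $u$ and $v$ are nonzero: $u \ne 0$ because $b_1$ is a nonzero move, and $v \ne 0$ because $u \ne z$ (since $\pm u \in B$ but $\pm z \notin B$). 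Non-negativity of $w_1$ yields $z^+ \ge u^+$ coordinate-wise, while non-negativity of $z^-$ combined with $v = w_1 - z^-$ yields $z^- \ge v^-$; these two inequalities together are easily seen to be equivalent to the semi-conformality of $z = u + v$, producing the required proper semi-conformal decomposition. For the converse, given a proper semi-conformal $z = u + v$, I would write down the polynomial identity
\[
x^{z^+} - x^{z^-} = x^{z^+ - u^+}\bigl(x^{u^+} - x^{u^-}\bigr) + x^{z^- - v^-}\bigl(x^{v^+} - x^{v^-}\bigr),
\]
whose monomial factors have honest non-negative exponents thanks to the same inequalities $z^+ \ge u^+$ and $z^- \ge v^-$. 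Starting from any minimal Markov basis $B_0$, the set $B = (B_0 \cup \{u, v\}) \setminus \{z, -z\}$ then still generates $I_A$ and avoids both $z$ and $-z$, once I verify that proper semi-conformality rules out $u, v \in \{0, \pm z\}$ (a quick case-check on $u = \pm z$ shows it fails the coordinate-wise semi-conformal condition); this shows $z \notin S(A)$.

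The main obstacle I anticipate is the forward direction of the semi-conformal characterization: I must extract a two-term decomposition from a walk of arbitrary length and see that a single first step already carries all the needed information. The delicate point is that semi-conformality must be deduced from the non-negativity of $w_1$ alone, with the entire remainder $w_1 \to \cdots \to z^-$ absorbed wholesale into the single summand $v = w_1 - z^-$; once one recognizes this and translates the non-negativity constraints into $z^+ \ge u^+$ and $z^- \ge v^-$, the rest is coordinate bookkeeping, and these same inequalities reappear on the opposite side of the equivalence to justify the polynomial identity underlying the converse.
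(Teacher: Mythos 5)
The paper does not prove this statement—it is quoted from Sturmfels and from Charalambous--Katsabekis--Thoma—so there is no internal proof to compare against; your argument is correct and is essentially the standard one from those sources (conformality of $z = y + (z-y)$ for the Graver part; the first step of a connecting walk giving $u = z^+ - w_1$, $v = w_1 - z^-$ for one direction of the indispensability part, and the two-term binomial identity for the other). The only points worth making explicit are the standing conventions you implicitly use: $z \neq 0$, Markov bases taken up to sign (so ``$B$ avoids $z$'' means $\pm z \notin B$), and $\ker(A) \cap \NN^n = \{0\}$, which is what guarantees $z$ has both a positive and a negative coordinate in your case-check ruling out $u, v = \pm z$.
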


\medskip
\noindent 
\textbf{Moves inside fibers.}
Let $u \in \ker(A)$ be a nonzero element. It is convenient to think of $u$ as a \textit{move} in the following sense. Suppose that $x \in \NN^n$. We say that $u$ is \textit{applicable} to $x$ if either
$x^- \ge u^-$ or $x^+ \ge u^+$. If $x^- \ge u^-$ then we say $u$ sends $x$ to $x+u \in \NN^n$, otherwise if $x^+ \ge u^+$ then we say $u$ sends $x$ to $x-u \in \NN^n$. Given an element $z \in \ker(A)$, we say that a move \textit{$u$ is applicable to $z$} if $u$ is applicable to either $z^+$ or $z^-$.

Suppose that $x$ and $y$ belong to the same fiber of $A$. We say that a subset $B \subseteq \ker(A)$ \textit{connects} $x$ and $y$ if there is a sequence of moves $u = (u_1, u_2, \dots, u_k)$ such that for each $i \in [k]$ we have that with $u_i \in B$ is a move that sends 
\[
x + u_1 + \dots + u_{i-1} \quad \text{ to } \quad x + u_1 + \dots + u_i
\]
and $x+u_1+\dots +u_k = y$. The Fundamental Theorem of Toric Ideals states that a collection of moves $B \subseteq \ker(A)$ is a Markov basis if and only if $B$ connects any pair of points within each fiber of $A$.

\medskip
\noindent
\textbf{Metrics.} Let $X$ be a set. A \textit{metric} on $X$ is a function $d: X \times X \rightarrow \RR \cup \{\infty \}$ such that for all $x, y, z \in X$:
\[
    d(x,y) \ge 0, \quad
    d(x,y) = 0 \iff x = y, \quad
    d(x,y) = d(y,x), \quad
    d(x,y) + d(y,z) \ge d(x,z).
\]
Throughout this article, we often consider the metric on $\RR^n$ induced by the $1$-norm $|| \cdot ||$, which is given by $d(x,y) = ||x-y|| = \sum_i |x_i - y_i|$, where $|\cdot|$ is the absolute value on $\RR$. 

\smallskip

In their work, Aoki and Takemura identify a special class of Markov bases called \textit{distance reducing Markov bases}.

\begin{definition} \label{def: distance reducing}(Distance reducing)
    Fix a metric $d$ on $\NN^n$ and subsets $B, Z \subseteq \ker(A)$. We say that \textit{$B$ reduces the distance of $Z$} with respect to $d$ (or \textit{$B$ $d$-reduces $Z$}) if for each $z \in Z$, there exists $u \in B$ such at least one of the following holds:
    \begin{itemize}
        \item $u$ is applicable to $z^+$ and either $d(z^++u, z^-) < d(z^+,z^-)$ or $d(z^+-u, z^-) < d(z^+,z^-)$,
        \item $u$ is applicable to $z^-$ and either $d(z^+, z^-+u) < d(z^+,z^-)$ or $d(z^+, z^--u) < d(z^+,z^-)$.
    \end{itemize}
    We say that $B$ \textit{strongly reduces the distance of $Z$} with respect to $d$ if for each $z \in Z$ each of the above conditions is satisfied by some, possibly different, $u \in B$.
    We say that \textit{$B$ is [strongly] distance reducing} with respect to $d$ if $B$ [strongly] reduces the distance of $\ker(A)$ with respect to $d$.
    We say that \textit{$B$ [strongly] reduces the distance of $Z$} if $B$ [strongly] reduces the distance of $Z$ with respect to the metric induced by the $1$-norm.
    We say that \textit{$B$ is [strongly] distance reducing} if $B$ [strongly] reduces the distance of $\ker(A)$ with respect to the metric induced by the $1$-norm. 
\end{definition}

In other words, a set of moves $B$ is distance reducing if for any $x$, $y$ in the same fiber of $A$ there exists a move $u \in B$ that reduces the distance between them. For the $1$-norm, this means that $u$ is applicable to at least one of $x$ and $y$ either
\begin{equation}\label{eqn: 1-norm reduction}
    ||x-y-u|| < ||x-y|| 
    \quad \text{or} \quad
    ||x-y+u|| < ||x-y||.
\end{equation}
And $B$ is strongly distance reducing if there exist moves $u, u' \in B$ applicable to $x, y$ respectively that satisfy (\ref{eqn: 1-norm reduction}).

\begin{proposition}[Aoki-Takemura Proposition~1]\label{prop: B d-reducing => Markov basis}
    If $B$ is distance reducing with respect to some metric then $B$ is a Markov basis.
\end{proposition}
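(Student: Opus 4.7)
The plan is to prove the stronger claim that $B$ connects $z^+$ and $z^-$ within their common fiber for every $z \in \ker(A)$. This implies the Markov basis property via a shifting argument: given $x, y \in \NN^n$ with $Ax = Ay$, set $z = x - y$ and $m = \min(x, y)$, so $x = z^+ + m$ and $y = z^- + m$. The conditions $v^- \ge u^-$ and $v^+ \ge u^+$ defining applicability of a move $u$ to $v \in \NN^n$ are both preserved under the shift $v \mapsto v + m$ with $m \in \NN^n$, so any $B$-path from $z^+$ to $z^-$ lifts step by step to a $B$-path from $x$ to $y$.

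To prove the claim I would use a double well-founded induction. The outer parameter is $\tau(z) := Az^+ = Az^- \in \NN A$; the standing assumption $\ker(A) \cap \NN^n = \{0\}$ makes $\NN A$ pointed, so it admits a well-order (for example, any linear refinement of the partial order $t' \le t \iff t - t' \in \NN A$, or the $\NN$-valued height coming from a positive linear functional on $\NN A$). The inner parameter is $d(z^+, z^-)$; it takes only finitely many values on the finite fiber $\MF_{\tau(z)}$ and is hence well-ordered there. The base case $z = 0$ is immediate. For the inductive step, the distance reducing property supplies $u \in B$; by symmetry, I may assume $u$ is applicable to $z^+$ with $d(z^+ + u, z^-) < d(z^+, z^-)$.

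Set $z' := z + u \in \ker(A)$. A coordinate-wise calculation shows $(z')^+ = (z^+ + u) - c$ and $(z')^- = z^- - c$, where $c := \min(z^+ + u,\, z^-) \in \NN^n$ records the cancellation needed to enforce disjoint support. Split on whether $c$ vanishes. If $c = 0$, the canonical pair $((z')^+, (z')^-)$ equals $(z^+ + u, z^-)$, still lies in $\MF_{\tau(z)}$, and has $d$-distance strictly below $d(z^+, z^-)$, so the inner induction yields a $B$-connection. If $c \ne 0$, then $\tau(z') = \tau(z) - Ac$ with $Ac \ne 0$ by pointedness, so $\tau(z') < \tau(z)$ in the outer well-order; the outer induction connects $(z')^+$ to $(z')^-$, and a further shift by $c$ lifts this to a $B$-path between $(z')^+ + c = z^+ + u$ and $(z')^- + c = z^-$ inside $\MF_{\tau(z)}$. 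In either subcase, prepending the single move $z^+ \to z^+ + u$ supplies the desired connection from $z^+$ to $z^-$.

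The main obstacle is the mismatch between the quantity controlled by the distance reducing hypothesis, namely $d(z^+ + u, z^-)$, and the canonical distance $d((z+u)^+, (z+u)^-)$ associated with the new kernel element. For the $1$-norm these coincide by translation invariance, but for an arbitrary metric the cancellation $c$ can distort the distance in uncontrolled ways, so a naive single induction on $d$ fails. Splitting on whether $c$ vanishes is the essential move: in the $c = 0$ branch there is nothing to reconcile, while in the $c \ne 0$ branch the metric is entirely sidelined and the descent is driven by the metric-independent pointedness of $\NN A$. Careful bookkeeping of the shifting argument in both branches, verifying that applicability and the connecting sequence transport intact under $+m$ and $+c$, is the remaining delicate step.
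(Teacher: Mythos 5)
Your proposal is correct, but it is a genuinely more careful argument than the one the paper records, which is only the Aoki--Takemura sketch: take $x,y$ in a fiber, apply a distance-reducing move, and terminate because the fiber is finite and the distance strictly drops at each step. That greedy iteration is immediate when the reducing hypothesis is available for arbitrary pairs in a fiber (as in Aoki--Takemura's original pairwise formulation, or for translation-invariant metrics such as the $1$-norm, where $d(x,y)=d(z^+,z^-)$ for $z=x-y$), but under Definition~\ref{def: distance reducing} the hypothesis is stated only for canonical pairs $(z^+,z^-)$, and after one move the pair $(z^++u,\,z^-)$ need not be canonical: for a general metric the cancellation $c=\min(z^++u,\,z^-)$ can distort the relevant distance, which is exactly the mismatch you isolate. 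Your remedy --- reduce to connecting $z^+$ and $z^-$ via the shift-by-$\min(x,y)$ argument, then run a lexicographic well-founded induction that descends in the pointed semigroup $\NN A$ when $c\neq 0$ and in the finitely many distance values of the fixed fiber when $c=0$ --- closes this gap and gives a complete proof valid for every metric and for inhomogeneous $A$, at the cost of extra bookkeeping (pointedness of $\NN A$ from $\ker(A)\cap\NN^n=\{0\}$, transport of paths and applicability under the shifts by $m$ and by $c$; incidentally, your ``linear refinement'' device is also sound, since the divisibility order on $\NN A$ is a well-quasi-order by Dickson's lemma, though the integral positive height is the cleaner choice). In short, the paper's sketch buys brevity and fidelity to the original homogeneous $1$-norm setting; your double induction buys a proof that works verbatim under the kernel-element formulation of distance reduction that the paper actually uses.
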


To illustrate the proof of this proposition we provide the following illuminating sketch. Take $x$ and $y$ in the same fiber of $A$ and consider the task of finding a sequence of moves in $B$ that connects $x$ and $y$. Since $B$ is distance reducing, we can find a move $u$, which is applicable to $x$ or $y$ and reduces the distance between them. We can continue in this way to find a move at each subsequent step that reduces the distance. Since there are only finitely elements in the fiber, we will eventually obtain a finite sequence of moves in $B$ that connects $x$ and $y$. Hence $B$ is a Markov basis.

\begin{remark}
    We note an important difference between our setup and that of Aoki-Takemura. We do not assume that $A$ is homogeneous, i.e., we do not assume that the all $1$'s row-vector is not a linear combination of the rows of $A$. In Section~\ref{sec: generalisation of 1 norm results}, we generalise some results about the homogeneous to the inhomogeneous case. 
\end{remark}

\medskip
\noindent \textbf{Minimal sets.}
A \textit{minimal Markov basis} is a Markov basis such that any proper subset is not a Markov basis. Similarly, a set is \textit{minimally distance reducing} if no proper subset is distance reducing. By Proposition~\ref{prop: B d-reducing => Markov basis}, any minimally distance reducing set is a Markov basis so we often call such a set a \textit{minimally distance reducing Markov basis}. If a minimal Markov basis is distance reducing then we call it a \textit{distance-reducing minimal Markov basis}. Note any that distance-reducing minimal Markov bases are minimally distance reducing Markov bases. However, a minimally distance reducing Markov basis need not be a minimal Markov basis.

\subsection{Complete intersections and gluings}\label{sec: prelim ci and gluing}

Let $A = \begin{pmatrix}
    a_1 & a_2 & \dots & a_n
\end{pmatrix} \in \ZZ^{d \times n}$ be a matrix of positive integers such that $\ker(A) \cap \NN^n = \{0\}$. We say that $A$ is a \textit{complete intersection} if the ideal $I_A$ is a complete intersection, i.e., $I_A$ is generated by $\nullity(A)$ elements.

The complete intersection matrices $A$  are characterised by \textit{gluings}. More precisely, the matrix $A$ is a complete intersection if and only if the columns of $A$ can be partitioned into two parts $\{a_1, a_2, \dots, a_n\} = \{b_1, \dots, b_p\} \sqcup \{c_1, \dots, c_q\}$ such that the matrices $B = \begin{pmatrix}
    b_1 & \dots & b_p
\end{pmatrix}$ and $C = \begin{pmatrix}
    c_1 & \dots & c_q
\end{pmatrix}$ are both complete intersections and satisfy the gluing property:
\[
\text{there exists }
x \in \NN B \cap \NN C
\text{ such that }
x\ZZ = \ZZ B \cap \ZZ C.
\]
Equivalently, there exists $z \in \ker([B \mid C])$ with $\supp(z^+) \subseteq \{1, \dots, p\}$ and $\supp(z^-) \subseteq \{p+1, \dots, p+q\}$ such that
    \[
    \ker([B \mid C]) = 
    \left(\ker(B) \times \{0\}\right) \oplus 
    \left(\{0\} \times \ker(C)\right) \oplus
    \langle z \rangle_\ZZ.
    \]
If $B \in \ZZ^{d \times p}$ and $C \in \ZZ^{d \times q}$ are matrices that satisfy the gluing property, then we write $B \circ C = [B \mid C]$ for the juxtaposition of $B$ and $C$.

The equivalence in the above definition is proved originally in \cite[Theorem~1.4]{rosales1997presentations} and, more generally in the setting of $p$-gluings, in \cite[Theorem~2.6]{morales2005complete}. The definitions are related as follows.
If $x \in \NN B \cap \NN C$, then $x = \sum_{i \in [p]} \lambda_i b_i = \sum_{j \in [q]} \mu_j c_j$ for some non-negative integers $\lambda_i$ and $\mu_j$, as in the first definition. Then the element $z = (\lambda_1, \dots, \lambda_p, -\mu_1, \dots, -\mu_q) \in \ker([B \mid C])$ satisfies the second definition. Similarly, if $z = (z_1, \dots, z_p, -z_{p+1}, \dots, -z_{p+q}) \in \ker([B \mid C])$ satisfied the second definition, then $x = b_1 z_1 + \dots + b_p z_p = c_1 z_{p+1} + \dots c_q z_{p+q} \in \NN A \cap \NN B$ satisfies the first definition.  

\medskip
\noindent \textbf{Gluing notation.} We introduce the following notation to keep track of gluings for monomial curves. Assume that $A = \begin{pmatrix}
    a_1 & a_2 & \dots & a_n
\end{pmatrix} \in \ZZ^{1 \times n}$ is a complete intersection. The \textit{gluing type} of $A$ is defined recursively. The base case is $n = 1$, which has gluing type $a_1$. If $A$ is the gluing of two complete intersections $B$ and $C$ with gluing types $T_B$ and $T_C$ respectively, then the type of $A$ is denoted $(T_B \circ T_C)$. For extra detail, we may also record the value $x \in \NN B \cap \NN C$ for the gluing and write the gluing type of $A$ as $(T_B \circ_x T_C)$.  So, for example, all complete intersection monomial curves in $\mathbb A^3$ have gluing type $((a_1 \circ a_2) \circ a_3)$ for an appropriate ordering of the coordinates. By a slight abuse of notation we identify $A$ with its gluing type.

\begin{example}
    Some matrices admit different gluing types. Let $A = \begin{pmatrix}
        3 & 5 & 9
    \end{pmatrix}$ then we have
    \[
    A = ((3 \circ_{15} 5) \circ_{9} 9)
    \quad \text{and} \quad
    A = ((3 \circ_9 9) \circ_{15} 5).
    \]
    There are two distinct minimal Markov bases for $A$ given by
    \[
        M_1 = \begin{bmatrix}
            5 & -3 & 0 \\
            3 & 0  & -1
        \end{bmatrix}
        \quad \text{and} \quad
        M_2 = \begin{bmatrix}
            2 & -3 & 1 \\
            3 & 0  & -1
        \end{bmatrix}.
    \]
    We observe that $M_1$ is not distance reducing as it fails to reduce the distance of the circuit $(0, 9, -5)$. On the other hand, it turns out that $M_2$ is distance reducing, which follows from Theorem~\ref{thm: dim3 ci}.
\end{example}

\section{Monomial curves in \texorpdfstring{$\mathbb A^3$}{A3}}\label{sec: dim3 monomial curves}

In this section we characterise the distance-reducing minimal Markov bases of monomial curves in $\mathbb A^3$. Throughout this section, we consider a matrix $A = \begin{pmatrix}a_1 & a_2 & a_3 \end{pmatrix}$ with distinct entries. 

\begin{theorem}\label{thm: dim3 all}
     Let $M$ be a minimal Markov basis for $A$. Then $M$ is distance reducing if and only if $M$ reduces the distance of the circuits of $A$.
\end{theorem}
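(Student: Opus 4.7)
The plan is to handle the two directions separately and to reduce the nontrivial direction to a case split based on whether $I_A$ is a complete intersection. The forward direction is immediate: by definition each circuit of $A$ belongs to $\ker(A)$, so if $M$ reduces the distance of every element of $\ker(A)$, it in particular reduces the distance of the three circuits $c_{12}, c_{13}, c_{23}$.

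For the backward direction, I would first appeal to Theorem~\ref{thm: 1 norm reducing test via Graver basis}, which asserts that $M$ is distance reducing if and only if $M$ reduces the distance of every element of the Graver basis $G(A)$. In dimension $3$, support-size-$1$ vectors cannot lie in $\ker(A)$ because the entries of $A$ are positive, and the support-size-$2$ primitive elements are, up to sign, exactly the circuits. So the task reduces to showing that, under the hypothesis that $M$ reduces the distance of each circuit, $M$ also reduces the distance of every full-support $z \in G(A)$.

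At this point I would invoke Herzog's classification of Markov bases for monomial curves in $\mathbb A^3$ \cite{Herzog1970}, which forces $|M| \in \{2, 3\}$ according to whether $I_A$ is a complete intersection or not. This yields a clean case split. In the complete intersection case, handled in Theorem~\ref{thm: dim3 ci}, the matrix $A$ admits a gluing $(a_i \circ a_j) \circ a_k$ for some permutation, and one can leverage the gluing structure, together with the restricted support of the two Markov generators, to trace how a distance-reducing move on a circuit propagates to any fully-supported Graver element. In the non-complete-intersection case, handled in Theorem~\ref{thm: dim3 nci}, the minimal Markov basis $M$ consists of three specific binomials of full support whose exponents are given by Herzog's explicit formulas, and the argument exploits these formulas directly.

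The main obstacle, which is deferred to Theorems~\ref{thm: dim3 ci} and \ref{thm: dim3 nci} rather than tackled in the present statement, is controlling the fully-supported primitive elements of $\ker(A)$: showing that every such $z$ admits a move in $M$ that strictly decreases $||z^+ - z^-||$, using only the assumption that $M$ does so for the three circuits. I expect the CI case to follow from a careful combinatorial analysis of the two gluing binomials together with the three circuit relations, while the NCI case will require unpacking Herzog's explicit presentation and checking, by examination of the possible sign patterns of a Graver element, that at least one of the three Markov generators must be applicable and distance reducing.
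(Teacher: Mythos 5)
Your proposal is correct and takes essentially the same route as the paper: the forward direction is immediate, and the backward direction is a case split on whether $I_A$ is a complete intersection, with all the real work deferred to Theorems~\ref{thm: dim3 ci} and~\ref{thm: dim3 nci}. The detour through Theorem~\ref{thm: 1 norm reducing test via Graver basis} is harmless but unnecessary here, since the proofs of those two theorems establish the implication directly for every element of $\ker(A)$ (by writing an arbitrary $z$ as an integer combination of the Markov basis elements and analysing signs), not just for full-support Graver elements.
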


\begin{proof}
    If $A$ is a complete intersection then the result follows from Theorem~\ref{thm: dim3 ci}. Otherwise, $A$ is not a complete intersection and the result follows from Theorem~\ref{thm: dim3 nci}.
\end{proof}

To prove Theorems~\ref{thm: dim3 ci} and \ref{thm: dim3 nci}, we use the explicit characterisation of the minimal Markov bases for monomial curves in $\mathbb A^3$. Let us begin by recalling from \cite{Herzog1970} the following terminology for elements of $\ker(A)$.

\medskip
\noindent \textbf{Notation.} Let $z \in \ker(A)$. If $z$ is nonzero, then there is a coordinate of $z$ of a different sign to the other coordinates, i.e., for some $i \in [3]$ we have that either
\begin{itemize}
    \item[1.] $z_i > 0$ and $z_j \le 0$ for each $j \neq i$, 
    \item[2.] $z_i < 0$ and $z_j \ge 0$ for each $j \neq i$.
\end{itemize}
In this case, we say that \textit{$z$ has type $i$}. Note, if $z$ is a circuit, then $z$ has two types. Suppose that $z$ has type $i$ and for any other element $z' \in \ker(A)$ of type $i$ we have $|z_i| \le |z'_i|$, then we say that $z$ is \textit{minimal type $i$}, or \textit{minimal} when the context is clear.

\smallskip

In \cite{Herzog1970}, Herzog shows that the ideal $I_A$ is minimally generated by the binomials corresponding to minimal elements of $\ker(A)$. Moreover, these generating sets directly determine whether $A$ is a complete intersection. The following theorem summarises the important results that we use throughout this section.

\begin{theorem}[{\cite[Propositions~3.3 and 3.5 and Theorem~3.8]{Herzog1970}}]\label{thm: Herzog dim 3}
    The minimal Markov bases of $A$ fall into one of two cases.
    \begin{enumerate}[parsep=5pt]
        \item \label{case: non ci thm Herzog dim 3}
        If no minimal element of $\ker(A)$ is a circuit, then, up to sign, there are unique minimal elements of $\ker(A)$:
        \[
        g_1 = (-c_1,   v_{12}, v_{13}), \ 
        g_2 = (v_{21}, -c_2,   v_{23}), \ 
        g_3 = (v_{31}, v_{32}, -c_3)
        \]
        of types $1$, $2$, $3$, respectively, with $c_i$ and $v_{i,j}$ positive for all $i, j$.
        Up to sign, $A$ has a unique minimal Markov basis that consists of the three minimal elements. In this case, $A$ is not a complete intersection.
        
        \item \label{case: ci thm Herzog dim 3} If a circuit is a minimal element of $\ker(A)$, then, up to sign and permutation of the coordinates, two  minimal elements of $\ker(A)$ are
        \[
        b = (b_1, -b_2, 0) \text{ and } c = (c_1, c_2, -c_3)
        \]
        where $b$ is the unique minimal type $1$ and $2$ element, and $c$ is a minimal type $3$ element, with $b_1, b_2, c_3 > 0$ and $c_1, c_2 \ge 0$. The other minimal elements of $\ker(A)$ are $c + \lambda b$ for any $\lambda \in \ZZ$ such that $c + \lambda b$ has type $3$. The minimal Markov bases are given by $\{b, c + \lambda b\}$ for each $\lambda$ as above. In this case, $A$ is a complete intersection.
    \end{enumerate}
\end{theorem}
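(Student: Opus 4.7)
The plan is to classify first the minimal elements of $\ker(A)$ by type, and then determine which subsets form minimal Markov bases.

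Since the entries of $A$ are positive and $\ker(A) \cap \NN^3 = \{0\}$, every nonzero $z \in \ker(A)$ must have at least one coordinate of strictly different sign from the others, giving a well-defined type $i \in [3]$ (with two types exactly when $z$ is a circuit). For each type, minimal elements exist by well-ordering on $|z_i|$. The first key step is a reduction lemma: every $z \in \ker(A)$ of type $i$ can be decomposed as $z = m + z'$ where $m$ is a multiple of a minimal type $i$ element and $z'$ has strictly smaller norm. Iterating this shows that the minimal type elements generate $\ker(A)$ as a Markov basis, and that a minimal Markov basis is obtained by choosing one minimal element per ``essential'' type (that is, a type not already covered by a circuit already in the basis).

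The case split then reads as follows. If no circuit is a minimal element (Case~1), each minimal type $i$ element $g_i$ has full support, and uniqueness of $g_i$ up to sign follows from a rank-$2$ lattice argument in $\ker(A) \cong \ZZ^2$: two distinct minimal type $i$ elements would differ by a nonzero element with $i$-th coordinate zero, i.e., by a circuit, and propagating minimality would force a minimal circuit, contradicting the case hypothesis. Thus $\{g_1, g_2, g_3\}$ is the unique minimal Markov basis; since it has three elements while $\nullity(A) = 2$, $I_A$ is not a complete intersection. If some circuit is minimal (Case~2), we may assume after permuting coordinates that $b = (b_1, -b_2, 0)$ is the unique minimal element of types $1$ and $2$ (again by the rank-$2$ argument), and any minimal type $3$ element $c$ pairs with $b$ to form a minimal Markov basis $\{b, c\}$ of size $\nullity(A) = 2$, so $I_A$ is a complete intersection. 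To enumerate all minimal type $3$ elements: if $c'$ is another with $(c')_3 = (c)_3$, then $c' - c \in \ker(A)$ has vanishing third coordinate, hence $c' - c = \lambda b$ for some $\lambda \in \ZZ$; the requirement that $c + \lambda b$ remain of type $3$ restricts the admissible values of $\lambda$, yielding the stated family $\{b, c + \lambda b\}$ of minimal Markov bases.

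The main obstacle is the reduction lemma underpinning the first step. Establishing that every $z \in \ker(A)$ admits a type-preserving reduction by a minimal type element of the same type --- so that the process terminates and the resulting generating set is actually a Markov basis rather than merely a $\ZZ$-basis for $\ker(A)$ --- is a type-sensitive analogue of Dickson's lemma and requires careful book-keeping of sign patterns to avoid circular reasoning. Once this is in place, the classification is a short combinatorial exercise in the rank-$2$ lattice $\ker(A)$, and the minimality claims follow because no element of any one type can be produced as a Markov-basis combination of moves whose types strictly avoid it.
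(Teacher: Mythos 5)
First, a point of orientation: the paper does not prove this statement at all --- it is imported verbatim from Herzog's 1970 paper (Propositions~3.3, 3.5 and Theorem~3.8 there), so there is no in-paper proof to compare your route against. Your overall strategy (classify minimal elements by type, show they generate, then enumerate minimal Markov bases by a rank-$2$ lattice computation) is broadly the shape of Herzog's argument, and your Case~2 enumeration of the family $c + \lambda b$ via ``two minimal type $3$ elements differ by an element with vanishing third coordinate, hence by a multiple of $b$'' is correct.

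However, the lemma you yourself identify as the crux --- that every $z \in \ker(A)$ of type $i$ decomposes as $z = m + z'$ with $m$ a multiple of a minimal type~$i$ element and $\|z'\| < \|z\|$ --- is not merely unproved in your write-up; as stated it is false, and the paper's own Example~\ref{example: 3-dim CI: unique MB does not imply reducing} is a counterexample. For $A = (3,5,11)$ the minimal elements are $b = (5,-3,0)$ and $c = (2,1,-1)$, and the circuit $z = (0,11,-5)$ (of types $2$ and $3$) satisfies $z = -2b + 5c$, yet $\|z - \lambda c\| \ge 16 = \|z\|$ for every $\lambda \in \ZZ$ and $\|z - \lambda b\| \ge 16$ likewise; no decomposition of the claimed form exists. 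Indeed, if your reduction lemma held, every minimal Markov basis of a monomial curve in $\mathbb A^3$ would be distance reducing, contradicting the main theme of Section~\ref{sec: dim3 monomial curves}. The correct well-founded measure in Herzog's argument is the semigroup degree $Az^+ \in \NN A$ (or the length $|z_i|$ in the distinguished coordinate), not the $1$-norm, and the connectivity argument must track applicability (staying in $\NN^3$), not just norm decrease. Separately, two of your uniqueness claims are asserted rather than proved: in Case~1, that two distinct minimal type-$i$ elements would ``force a minimal circuit'' does not follow directly from their difference being a multiple of a circuit (one must compare that circuit's entries against $c_2$ and $c_3$, which requires Herzog's semigroup argument); and in Case~2, that a circuit which is minimal for type $1$ is automatically the minimal element for type $2$ as well is a genuine assertion of the theorem, not a consequence of the rank-$2$ observation. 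Finally, your claim that a minimal Markov basis consists only of minimal elements (equivalently, that every Markov basis contains an element of each type realising the minimal value $c_i$) is used but never justified.
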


If $A$ is not a complete intersection, then the unique minimal Markov basis described above satisfies the following relation.

\begin{proposition}[{\cite[Proposition~3.2]{Herzog1970}}]\label{prop: 3 dim nci gi sum to zero}
    If $A$ is not a complete intersection, then its unique minimal Markov basis $\{g_1, g_2, g_3 \}$ satisfies $g_1 + g_2 + g_3 = 0$.
\end{proposition}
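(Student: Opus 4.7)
The plan is to prove $g_1 + g_2 + g_3 = 0$ by showing that $h := g_1 + g_2 + g_3$ has non-negative coordinates; since $h \in \ker(A)$ and $\ker(A) \cap \NN^3 = \{0\}$ by the standing hypothesis, this forces $h = 0$. Reading $h$ coordinate-wise, the diagonal entry $-c_k$ appears in the $k$-th slot of exactly one $g_i$, so the required non-negativity becomes the three ``triangle-like'' inequalities $c_k \leq v_{ik} + v_{jk}$ for $\{i,j,k\} = \{1,2,3\}$.

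The first step is to establish the six off-diagonal bounds $v_{ij} < c_j$ for $i \neq j$. Taking $v_{21}$ versus $c_1$ as a representative case, I would assume for contradiction that $v_{21} \geq c_1$ and examine $g_1 + g_2 = (v_{21} - c_1,\ v_{12} - c_2,\ v_{13} + v_{23})$: if additionally $v_{12} \geq c_2$ then all three coordinates are non-negative while the third is strictly positive, so $g_1 + g_2 \in \ker(A) \cap \NN^3 \setminus \{0\}$, contradicting the standing hypothesis; if instead $v_{12} < c_2$, then $g_1 + g_2$ is a type $2$ element with type-$2$ magnitude $c_2 - v_{12} < c_2$, contradicting the minimality of $g_2$. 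The remaining five inequalities follow by permuting the indices.

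Once the off-diagonal bounds are in hand, the sign pattern of each pairwise sum is pinned down. For instance $g_1 + g_2$ has signs $(-,-,+)$, so it is a type $3$ element whose type-$3$ magnitude is $v_{13} + v_{23}$; minimality of $g_3$ yields $c_3 \leq v_{13} + v_{23}$. The analogous arguments applied to $g_1 + g_3$ and $g_2 + g_3$ give $c_2 \leq v_{12} + v_{32}$ and $c_1 \leq v_{21} + v_{31}$. Substituting these into the coordinate-wise formula for $h$ shows each coordinate of $h$ is non-negative, so $h \in \ker(A) \cap \NN^3 = \{0\}$.

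The main obstacle is the case analysis behind the off-diagonal inequalities: it relies simultaneously on the minimality of each $g_j$ (to rule out the ``small magnitude'' case) and on the standing assumption $\ker(A) \cap \NN^3 = \{0\}$ (to rule out the ``all non-negative'' case), and one must be careful to track strict versus non-strict inequalities so that the minimality step actually produces a contradiction. Once those bounds are proved, everything else is a short coordinate-wise computation.
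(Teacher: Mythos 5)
Your argument is correct. Note that the paper does not prove this proposition at all --- it is quoted directly from Herzog, so there is no in-paper proof to compare against; your write-up supplies a self-contained derivation. The structure is sound: the six off-diagonal bounds $v_{ij}<c_j$ follow exactly as you describe (in Case B one gets a type-$2$ element $g_1+g_2$ with second-coordinate magnitude $c_2-v_{12}<c_2$, and minimality of $g_2$ forces $v_{12}\le 0$, contradicting $v_{12}>0$; the element is nonzero because its third coordinate $v_{13}+v_{23}$ is strictly positive). With those bounds, each pairwise sum $g_i+g_j$ has the sign pattern of a type-$k$ element, and minimality of $g_k$ gives $c_k\le v_{ik}+v_{jk}$ for each $\{i,j,k\}=\{1,2,3\}$; hence $g_1+g_2+g_3\in\ker(A)\cap\NN^3=\{0\}$. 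The only point worth making explicit when you write this up is that ``minimal type $k$'' in the sense of Theorem~\ref{thm: Herzog dim 3} compares absolute values of the $k$-th coordinate across both sign cases of type-$k$ elements, which is what licenses comparing $g_k$ (whose $k$-th coordinate is negative) with $g_i+g_j$ (whose $k$-th coordinate is positive). Herzog's original route goes through uniqueness of representations in the numerical semigroup and actually yields the equalities $c_k=v_{ik}+v_{jk}$ directly; your kernel-positivity argument recovers the same conclusion from the two standing hypotheses ($\ker(A)\cap\NN^3=\{0\}$ and minimality) without touching the semigroup structure.
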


By Theorem~\ref{thm: dim3 all}, the distance reduction property is characterised by circuits. We use this to observe that a unique minimal Markov basis does not imply that it is distance reducing.

\begin{example} \label{example: 3-dim CI: unique MB does not imply reducing}
    Let $A = \begin{pmatrix}
        3 & 5 & 11
    \end{pmatrix}$. Then $A$ has a unique minimal Markov basis 
    \[
    M = \{b := (5, -3, 0), \,  c:= (2, 1, -1)\},
    \]
    which is evident from the observation that $b$ is not applicable to $c$. Hence, there is not nonzero $\lambda \in \ZZ$ such that $c + \lambda b$ has type $3$. Consider the circuit $z = (0, 11, -5) \in \ker(A)$. The elements $b$ and $c$ are applicable to $z$. However, neither reduces the distance of $z$ as
    \[
        ||z + b|| = ||(5, 8, -5)|| = 18 \not < ||z||
        \text{ and }
        ||z - c|| = ||(-2, 10, -4)|| = 16 \not < ||z||.
    \]

\end{example}

\subsection{Complete intersections}

By Theorem~\ref{thm: Herzog dim 3}-(\ref{case: ci thm Herzog dim 3}), the complete intersection monomial curves in $A = \begin{pmatrix}
    a_1 & a_2 & a_3
\end{pmatrix}$ have a minimal Markov basis consisting of a circuit $b = (b_1, -b_2, 0)$ and another element $c = (c_1, c_2, -c_3)$. By swapping the first and second column of $A$, we may assume that $b_1 > b_2$ or, equivalently, that $a_1 < a_2$. 

\begin{theorem}\label{thm: dim3 ci}
    Suppose $A$ is a complete intersection and let 
    \[
    M = \{b := (b_1, -b_2, 0),\ c := (c_1, c_2, -c_3)\}
    \]
    be a minimal Markov basis for $A$, as in Theorem~\ref{thm: Herzog dim 3}, with $b_1 > b_2$. Then the following are equivalent:
    \begin{enumerate}
        \item $M$ is distance reducing,
        \item $M$ reduces the distance of the circuit $z = \gamma(0, a_3, -a_2)$ where $\gamma = 1 / \gcd(a_2, a_3)$,
        \item $c_1 < c_2 + c_3$.
    \end{enumerate}
\end{theorem}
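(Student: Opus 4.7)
The plan is to prove the cyclic implications (1)$\Rightarrow$(2)$\Rightarrow$(3)$\Rightarrow$(1). The implication (1)$\Rightarrow$(2) is immediate since the circuit $z$ lies in $\ker(A)$.

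For (2)$\Rightarrow$(3), I argue directly on $z = (0, a_3/d, -a_2/d)$ with $d = \gcd(a_2, a_3)$. Since the first coordinate of $z$ is zero and $b_1 > 0$, the move $b$ is applicable to $z$ only via addition to $z^+$, which requires $a_3/d \ge b_2$. The hypothesis $b_1 > b_2$ then gives $\|z + b\| = \|z\| + b_1 - b_2 > \|z\|$ and $\|z - b\| = \|z\| + b_1 + b_2 > \|z\|$, so $b$ cannot reduce $z$. Hence any reduction must come from $c$. The minimality of $c_3$ as a type-$3$ value forces $c_3 \le a_2/d$, and combined with the kernel equation $a_1 c_1 + a_2 c_2 = a_3 c_3$ this yields $c_2 \le a_3/d$. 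With both inequalities, $c$ is applicable to $z^-$ via addition, and the absolute values in $\|z - c\|$ collapse to give $\|z - c\| = \|z\| + c_1 - c_2 - c_3$. Therefore $c$ reduces $z$ if and only if $c_1 < c_2 + c_3$.

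For (3)$\Rightarrow$(1), I apply Theorem~\ref{thm: 1 norm reducing test via Graver basis} to reduce the problem to showing that $M$ reduces every primitive $z \in G(A)$. Up to sign, we may take the third coordinate of $z$ to be non-positive. If it is zero, then $z = \pm b$ and $b$ reduces. If it is negative and the first two coordinates of $z$ are non-negative (type $3$), then the third coordinate of $z^-$ is a positive multiple of $c_3$ (since $\ker(A) = \ZZ b \oplus \ZZ c$), so $c$ is applicable to $z^-$ via addition. I compute $\|z\| - \|z - c\|$ and split into four sub-cases according to the signs of $(z)_1 - c_1$ and $(z)_2 - c_2$: the case $(z)_1 < c_1, (z)_2 < c_2$ is ruled out by $a_1(c_1 - (z)_1) + a_2(c_2 - (z)_2) = a_3(c_3 - |(z)_3|) \le 0$; the cases with $(z)_2 \ge c_2$ yield reduction via $c$ (using condition (3) when $(z)_1 < c_1$). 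The delicate remaining sub-case is $(z)_1 \ge c_1$ with $(z)_2 < c_2$: here $z - c$ has signs $(\ge 0, <0, \le 0)$ and is non-zero (else $z = c$, contradicting $(z)_2 < c_2$), so it is a type-$1$ element of $\ker(A)$ with first coordinate $(z)_1 - c_1 > 0$; by the minimality of $b_1$ among type-$1$ elements (Theorem~\ref{thm: Herzog dim 3}), $(z)_1 - c_1 \ge b_1$, forcing $(z)_1 \ge b_1$. Then $b$ is applicable to $z^+$ via subtraction and reduces $z$ because $b_1 > b_2$. Type-$1$ and type-$2$ primitives, including the primitive circuits on $\{1, 3\}$ and $\{2, 3\}$, are handled analogously: $\ker(A) \cap \NN^n = \{0\}$ eliminates the problematic sub-cases, and reduction follows either from $c$ via (3) or from $b$ via the minimality of $b_1$. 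The main obstacle is the delicate sub-case for type-$3$ primitives; the key idea is to identify $z - c$ as a type-$1$ element and invoke the minimality of $b_1$.
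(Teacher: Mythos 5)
Your proof is correct, and for $(1)\Rightarrow(2)\Rightarrow(3)$ it is essentially the paper's argument: $b$ cannot reduce the circuit because $b_1>b_2$, so $c$ must, and the reduction amount is exactly $c_2+c_3-c_1$ once the applicability inequalities $z_2\ge c_2$, $z_3\ge c_3$ are in hand (you get them from minimality of $c_3$ plus the kernel equation; the paper gets them from the decomposition $z=-\alpha b+\beta c$ — both are fine). For $(3)\Rightarrow(1)$ your route genuinely differs. The paper writes $z=\alpha b+\beta c$, normalises to $\beta>0$, and cases on the sign of $\alpha$: when $\alpha>0$ it reads off $(z)_1=\alpha b_1+\beta c_1\ge b_1$ directly, so $b$ applies and reduces by $b_1>b_2$; when $\alpha<0$ it reads off $z_2\ge c_2$, $z_3\ge c_3$ and uses $(3)$. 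You instead pass through Theorem~\ref{thm: 1 norm reducing test via Graver basis} (an unnecessary detour — your type analysis works verbatim for arbitrary $z\in\ker(A)$, and the reduction to the Graver basis is never exploited) and, in the delicate sub-case $(z)_1\ge c_1$, $(z)_2<c_2$ of the type-$3$ analysis, you identify $z-c$ as a type-$1$ element and invoke Herzog's minimality of $b_1$ to force $(z)_1-c_1\ge b_1$. That is a heavier but sound mechanism for establishing applicability of $b$; it replaces the paper's one-line observation $\alpha\ge 1\Rightarrow(z)_1\ge b_1$. Two points to tighten: before calling $z-c$ a type-$1$ element you should rule out $(z)_1-c_1=0$ (it follows from $\ker(A)\cap\NN^3=\{0\}$ applied to $-(z-c)$, the same device you cite later), and the "analogous" type-$1$ and type-$2$ cases deserve a sentence each — e.g.\ in type $2$ one has $\alpha\le 0$ in $z=\alpha b+\beta c$, so $z_2\ge c_2$, $z_3\ge c_3$ and $c$ reduces by $(3)$, while in type $1$ either $\alpha\ge 1$ and $b$ applies, or $z$ is a positive multiple of $c$.
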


\begin{proof}
$(1) \implies (2)$ Immediate from the definition.

$(2) \implies (3)$ Assume that $M$ reduces the distance of the circuit $z = (0, z_2, -z_3)$. If $b$ is applicable to $z$ then it does not reduce the distance of $z$ because $b_1 > b_2$. So $c$ reduces the distance of $z$. Since $z \in \ker(A)$ and $M$ is a Markov basis, we have $z = -\alpha b + \beta c$ for some integers $\alpha, \beta$. So
\[
z = (0, z_2, -z_3) = (-\alpha b_1 + \beta c_1,\ 
\alpha b_2 + \beta c_2,\ 
-\beta c_3).
\]
Since the third coordinate of $b$ is zero, it follows that $\beta > 0$. By the first coordinate, we have $-\alpha b_1 + \beta c_1 = 0$, so it follows that $\alpha \ge 0$. Hence $z_2 = \alpha b_2 + \beta c_2 \ge c_2$. Also note that $z$ is type $3$ and $c$ is minimal type $3$, so we have $c_3 \le z_3$. Since $c$ reduces the distance of $z$, we have $||z|| > ||z - c||$, hence
\[
||z|| > ||z - c|| = |-c_1| + |z_2 - c_2| + |-z_3 + c_3| = ||z|| + c_1 - c_2 - c_3. 
\]
So we have $c_1 < c_2 + c_3$.

\bigskip

\noindent
$(3) \implies (1)$ Assume that $c_1 < c_2 + c_3$. Let $z \in \ker(A)$ by any element. We show that $M$ reduces the distance of $z$. We write $z = \alpha b + \beta c = (\alpha b_1 + \beta c_1, -\alpha b_2 + \beta b_2, -\beta c_3)$ for some integers $\alpha$, $\beta$. If either $\alpha = 0$ or $\beta = 0$, then $z$ is a multiple of $c$ or $b$, respectively, so $c$ or $b$ is applicable to $z$ and reduces the distance. So, from now on, we assume $\alpha \neq 0$ and $\beta \neq 0$. If $\beta < 0$ then we replace $z$ with $-z$. So without loss of generality, we may assume that $\beta > 0$.

If $\alpha > 0$ then $z_1 = \alpha b_1 + \beta b_2 \ge b_1$, hence $b$ is applicable to $z$. So we have
\[
||z - b|| = ||(z_1 - b_1, \pm z_2 + b_2, -z_3)|| \le z_1 - b_1 + z_2 + b_2 + z_3 = ||z|| - b_1 + b_2 < ||z||.
\]
Hence $b$ reduces the distance of $z$ and we are done.

If $\alpha < 0$, then we have $z_2 = (-\alpha)b_2 + \beta b_2 \ge c_2$. Since $z_3 = \beta c_3 \ge c_3$, so $c$ is applicable to $z$. Hence
\[
||z-c|| = ||(\pm z_1 - c_1, z_2 - c_2, -z_3+c_3)||
= |z_1 \pm c_1| + z_2 - c_2 + z_3 - c_3
\le ||z|| + c_1 - c_2 - c_3
< ||z||,
\]
where the final inequality follows from the assumption that $c_1 < c_2 + c_3$.
\end{proof}

\subsection{Non complete intersections} The minimal Markov basis for a non complete intersection monomial curves in $\mathbb A^3$ is explicitly described in Theorem~\ref{thm: Herzog dim 3}-(\ref{case: non ci thm Herzog dim 3}). With this description, we classify when the Markov basis is distance reducing as follows.

\begin{theorem}\label{thm: dim3 nci}
    Let $A = \begin{pmatrix}
        a_1 & a_2 & a_3
    \end{pmatrix}$ be a matrix with $a_1 < a_2 < a_3$ that is not a complete intersection. Let 
    \[
    M = \{g_1 := (-c_1, v_{12}, v_{13}),\ 
    g_2 := (v_{21}, -c_2, v_{23}),\ 
    g_3 := (v_{31}, v_{32}, -c_3)\}
    \]
    be a minimal Markov basis for $A$, as in Theorem~\ref{thm: Herzog dim 3}. Then the following are equivalent:
    \begin{enumerate}
        \item $M$ is distance reducing,
        \item $M$ reduces the distance of the circuit $z = \gamma(0, a_3, -a_2)$ where $\gamma = 1/ \gcd(a_2, a_3)$,
        \item at least one of $v_{21} < c_2 + v_{23}$ or $v_{31} < v_{32} + c_3$ holds.
    \end{enumerate}
\end{theorem}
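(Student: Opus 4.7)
The direction $(1) \Rightarrow (2)$ is immediate from the definition. The plan for both nontrivial directions leverages the explicit structure of $M = \{g_1, g_2, g_3\}$ from Theorem~\ref{thm: Herzog dim 3} together with the relation $g_1 + g_2 + g_3 = 0$ from Proposition~\ref{prop: 3 dim nci gi sum to zero}; this relation yields the coordinate identities $v_{21} + v_{31} = c_1$, $v_{12} + v_{32} = c_2$, and $v_{13} + v_{23} = c_3$, which form the computational backbone of the argument.

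For $(2) \Rightarrow (3)$: The circuit $z = (0, z_2, -z_3)$ has vanishing first coordinate, so since each $g_i$ has full support in the non-CI case, $g_1$ is not applicable to $z$; only $g_2$ and $g_3$ can reduce its distance. I would then establish the strict inequalities $v_{23} < z_3$ and $v_{32} < z_2$ as follows: if $v_{23} \ge z_3$, then $z + g_2 = (v_{21}, z_2 - c_2, v_{23} - z_3)$ has all non-negative entries (using $z_2 \ge c_2$ by minimality of $g_2$) and hence lies in $\ker(A) \cap \NN^3 = \{0\}$, forcing $g_2 = -z$ to be a circuit, which contradicts Theorem~\ref{thm: Herzog dim 3}-(\ref{case: non ci thm Herzog dim 3}); the argument for $v_{32} < z_2$ is symmetric. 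A direct computation using these bounds shows that applying $g_2$ to $z^+$ reduces the distance by exactly $c_2 + v_{23} - v_{21}$, and applying $g_3$ to $z^-$ reduces it by $v_{32} + c_3 - v_{31}$; positivity of at least one of these quantities is (3).

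For $(3) \Rightarrow (1)$: Assume without loss of generality $v_{21} < c_2 + v_{23}$; the other case is handled by an entirely analogous argument with $g_3$ replacing $g_2$. For any $z \in \ker(A) \setminus \{0\}$, write $z = \alpha g_2 + \beta g_3$, valid since $\{g_2, g_3\}$ spans $\ker(A)$. If $\alpha\beta = 0$, then $z$ is a multiple of $g_2$ or $g_3$ and is reduced directly. If $\alpha$ and $\beta$ share a sign (WLOG both $\ge 1$ after negating $z$ if necessary), then $z_1 \ge v_{21} + v_{31} = c_1$, so $g_1$ is applicable to $z^+$, and a triangle-inequality bound yields $||z|| - ||z + g_1|| \ge c_1 - v_{12} - v_{13}$; the kernel equation $a_1 c_1 = a_2 v_{12} + a_3 v_{13}$ combined with $a_1 < a_2, a_3$ gives $c_1 > v_{12} + v_{13}$, so the reduction is strictly positive. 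If $\alpha, \beta$ have opposite signs (WLOG $\alpha \ge 1, \beta \le -1$ after negation), then $|z_2| = \alpha c_2 + |\beta| v_{32} \ge c_2$, so $g_2$ is applicable to $z^-$, and a similar bound gives $||z|| - ||z - g_2|| \ge c_2 + v_{23} - v_{21} > 0$ by the assumption.

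The main obstacle is executing the case analysis in $(3) \Rightarrow (1)$ cleanly: correctly identifying which $g_i$ is applicable to $z^+$ or $z^-$ in each sign configuration of $(\alpha, \beta)$, and pinning down the appropriate triangle-inequality lower bound for each resulting reduction. The coordinate identities from $g_1 + g_2 + g_3 = 0$, together with the ordering $a_1 < a_2 < a_3$, are precisely what convert the otherwise opaque mixed-sign coordinate inequalities into clean positivity statements.
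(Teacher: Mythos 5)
Your proposal is correct and follows essentially the same route as the paper: both directions rest on the relation $g_1+g_2+g_3=0$ to reduce to a two-term decomposition $z=\alpha g_2+\beta g_3$, the same sign case analysis, the inequality $c_1>v_{12}+v_{13}$ coming from $a_1<a_2<a_3$, and the same reduction amounts $c_2+v_{23}-v_{21}$ and $v_{32}+c_3-v_{31}$. The only (harmless) variation is that in $(2)\Rightarrow(3)$ you obtain $z_3>v_{23}$ and $z_2>v_{32}$ via the $\ker(A)\cap\NN^3=\{0\}$ argument, whereas the paper reads them off the signs of the coefficients in the decomposition of the circuit.
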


\begin{proof}
$(1) \implies (2)$. Follows immediately from the definition.

$(2) \implies (3)$. Assume that $M$ reduces the distance of the circuit $z = (0, z_2, -z_3)$.
We have that $z = \alpha g_1 + \beta g_2 + \gamma g_3$ for some $\alpha, \beta, \gamma \in \ZZ$. By Proposition~\ref{prop: 3 dim nci gi sum to zero}, we have that $g_1 + g_2 + g_3 = 0$. Therefore $z = (\beta - \alpha)g_2 + (\gamma - \alpha)g_3$. So, without loss of generality, we may assume that $\alpha = 0$. We have
\[
z = \beta g_2 + \gamma g_3 = 
(\beta v_{21} + \gamma v_{31},\ 
-\beta c_2 + \gamma v_{32},\ 
\beta v_{23} - \gamma c_3).
\]
By comparing the sign patterns, we deduce that $\beta < 0$ and $\gamma > 0$. By assumption, $M$ reduces the distance of $z$. However, the move $g_1$ is not applicable to $z$. So we proceed by taking cases on whether $g_2$ or $g_3$ reduces the distance of $z$.

\medskip

\noindent\textbf{Case 1.} Assume that $g_2$ reduces the distance of $z$. Then we have that $g_2$ is applicable to $z$ and so $c_2 \le z_2$. Observe that $z_3 = (-\beta) v_{23} + \gamma c_3 \ge v_{23}$. Since $g_2$ reduces the distance of $z$, we have $||z|| > ||z + g_2||$ and so
\[
||z|| > ||z+g_2|| = ||(v_{21}, z_2 - c_2, -z_3 + v_{23})||
= v_{21} + z_2 - c_2 + z_3 - v_{23}
= ||z|| + v_{21} - c_2 - v_{23}.
\]
So we have that $v_{21} < c_2 + v_{23}$ and we are done.

\medskip

\noindent\textbf{Case 2.} Assume that $g_3$ reduces the distance of $z$. Then we have that $g_3$ is applicable to $z$ and so $c_3 \le z_3$. Observe that $z_2 = (-\beta)c_2 + \gamma v_{32} \ge v_{32}$. Since $g_3$ reduces the distance of $z$, we have $||z|| > ||z - g_3||$ and so
\[
||z|| > ||z-g_3|| = ||(-v_{31}, z_2 - v_{32}, -z_3 + c_3)||
= v_{31} + z_2 - v_{32} + z_3 - c_3
= ||z|| + v_{31} - v_{32} - c_3.
\]
So we have that $v_{31} < v_{32} + c_3$ and this concludes the proof of the forward direction.

\medskip
$(3) \implies (1)$. Assume that either $v_{21} < c_2 + v_{23}$ or $v_{31} < v_{32} + c_3$ holds. Let $z \in \ker(A)$ be any element. We show that $M$ reduces the distance of $z$. Since $z \in \ker(A)$, we have that $z = \alpha g_1 + \beta g_2 + \gamma g_3$ for some integers $\alpha, \beta, \gamma$. By Proposition~\ref{prop: 3 dim nci gi sum to zero}, we have $g_1 + g_2 + g_3 = 0$ and so we may assume that $\alpha = 0$. So we have
\[
z = \beta g_2 + \gamma g_3 = 
(\beta v_{21} + \gamma v_{31},\ 
-\beta c_2 + \gamma v_{32},\ 
\beta v_{23} - \gamma c_3).
\]
Note that if either $\beta = 0$ or $\gamma = 0$, then we have that $z$ is multiple of $g_3$ or $g_2$, respectively, so that element reduces the distance of $z$. So, from now on, we assume that $\beta \neq 0$ and $\gamma \neq 0$. Without loss of generality, we may assume that $\beta > 0$.

If $\gamma > 0$ then we have $z_1 = \beta v_{21} + \gamma v_{31} \ge v_{21} + v_{31} = c_1$, where the final equality follows from Proposition~\ref{prop: 3 dim nci gi sum to zero}. So $g_1$ is applicable to $z$. Since $a_1 < a_2 < a_3$, we have that
\[
c_1 = \frac{a_2}{a_1} v_{12} + \frac{a_3}{a_1} v_{13} > v_{12} + v_{13}.
\]
So we have
\[
||z+g_1|| = ||(z_1 - c_1, \pm z_2 + v_{12}, \pm z_3 + v_{13})|| \le ||z|| - c_1 + v_{12} + v_{13} < ||z||.
\]
Hence $g_1$ reduces the distance of $z$ and we are done.

It remains to consider the case when $\gamma < 0$. We proceed by taking cases based on which assumption, either $v_{21} < c_2 + v_{23}$ or $v_{31} < v_{32} + c_3$, holds.

\medskip

\noindent\textbf{Case 1.} Assume $v_{21} < c_2 + v_{23}$. Then we have $z_2 = \beta c_2 + (-\gamma)v_{32} \ge c_2$, hence $g_2$ is applicable to $z$. Note that $z_3 = \beta v_{23} + (-\gamma)c_3 \ge v_{23}$. So we have
\[
||z-g_2|| = ||(\pm z_1 - v_{21}, -z_2 + c_2, z_3 - v_{23})|| \le ||z|| + v_{21} - c_2 - v_{23} 
< ||z||.
\]
Hence $g_2$ reduces the distance of $z$ and we are done.

\medskip

\noindent\textbf{Case 2.} Assume $v_{31} < v_{32} + c_3$. Then we have $z_3 = \beta v_{23} + (-\gamma)c_3 \ge c_3$, hence $g_3$ is applicable to $z$. Note that $z_2 = \beta c_2 + (-\gamma)v_{32} \ge v_{32}$. So we have
\[
||z+g_3|| = ||(\pm z_1 + v_{31}, -z_2 + v_{32}, z_3 - c_3)|| \le ||z|| + v_{31} - v_{32} - c_3 
< ||z||.
\]
Hence $g_3$ reduces the distance of $z$, which concludes the proof.
\end{proof}

\section{Monomial curves of the first kind}\label{sec: monomial curves first kind}

Throughout this section, we consider a complete intersection monomial curve in $\mathbb A^n$ given by the matrix 
$A = \begin{pmatrix}
    a_1 & a_2 & \dots & a_n
\end{pmatrix}
\in \ZZ^{1 \times n}
$
such that $\ker(A) \cap \NN^n = \{0\}$. We assume that $A$ admits a \textit{gluing of the first kind}:
\[
A = (( \dots ((a_1 \circ a_2) \circ a_3) \dots ) \circ a_n).
\]
We call these \textit{monomial curves of the first kind}.
For each $k \in [n]$ we define the submatrix $A_k = \begin{pmatrix}
    a_1 & \dots & a_k
\end{pmatrix}$. Note that $A_k$ admits a gluing of the first kind
\[
A_k = (( \dots ((a_1 \circ a_2) \circ a_3) \dots ) \circ a_k).
\]
If $n > 1$, then $A_n = (A_{n-1} \circ a_n)$, hence any minimal Markov basis for $A_n$ is built from a Minimal Markov basis of $A_{n-1}$ as follows. Let $M_n$ be a minimal Markov basis for $A_n$ then there exists a minimal Markov basis $M_{n-1}$ for $A_{n-1}$ such that
\[
M_n = (M_{n-1}
 \times \{0\}) \cup 
\{u_n := 
\begin{pmatrix}
    u_{n,1} & u_{n,2} & \dots & u_{n, n-1} & -u_{n, n}
\end{pmatrix}
\}
\]
for some non-negative integers $u_1, \dots, u_n$.
So, inductively, we may identify a minimal Markov basis $M_n$ for $A_n$ with the rows of a matrix
\[
M_n = \begin{bmatrix}
    u_{2,1} & -u_{2,2} & 0 & 0 & \dots & 0 & 0 \\
    u_{3,1} & u_{3,2} & -u_{3,3} & 0 & \dots & 0 & 0 \\
    u_{4,1} & u_{4,2} & u_{4,3} & -u_{4,4} & \dots & 0 & 0 \\
    \vdots & \vdots & \vdots & \vdots & \ddots & \vdots & \vdots \\
    u_{n-1, 1} & u_{n-1, 2} & u_{n-1, 3} & u_{n-1, 4} & \dots & -u_{n-1, n-1} & 0 \\
    u_{n, 1} & u_{n, 2} & u_{n, 3} & u_{n, 4} & \dots & u_{n, n-1} & -u_{n,n}
\end{bmatrix}.
\]
Without loss of generality, we assume that $a_1 < a_2$.

\begin{remark}
    For any matrix $A \in \ZZ^{1 \times n}$, we say that the numerical semigroup $\NN A$ is \textit{symmetric} if there exists $m \in \NN$ such that $x \in \NN A$ if and only if $m - x \notin S$ for all $x \in \ZZ$. Note that this definition does not depend on a choice of generators of $\NN A$ or an ordering on them. We say that $A$ is \textit{specially symmetric} if for each $k \in [n-1]$ we have 
    $
    \lcm(\gcd(a_1, a_2, \dots, a_k), a_{k+1}) \in \NN A_k
    $.   
    By \cite[Proposition~2.1]{Herzog1970}, it follows that if $A$ is specially symmetric then $\NN A$ is symmetric.

    Observe that $\lcm(\gcd(a_1, \dots, a_k), a_{k+1})$ is the generator of the group $\ZZ A_k \cap \ZZ \{a_{k+1}\}$. By the definition of gluing, we have that $\lcm(\gcd(a_1, \dots, a_k), a_{k+1}) \in \NN A_k$ is satisfied if and only if there is a gluing $A_{k+1} = (A_k \circ a_k)$. So monomial curves of the first kind correspond to a special kind of symmetric numerical semigroup. 
\end{remark}

We now define a set of inequalities, which we use to completely characterise whether $M$ is distance reducing. In the following definition, we define conditions on the entries $u_{i,j}$ of $M$.

\begin{definition}\label{def: first kind Rij conditions}
Fix $2 \le i < j \le n$. We say $M$ satisfies condition $R_{i,j}$ if at least one of the following conditions holds:
\begin{itemize}
    \item[(i)] $u_{i,1} + u_{i,2} + \dots + u_{i,i-1} < u_{i,i}$,
    \item[(ii)] There exists $\ell \in \{i+1, i+2, \dots, j-1\}$ such that $u_{\ell,m} = 0$ for all $m \in [\ell - 1] \setminus \{i\}$, i.e. $u_\ell$ is a circuit supported on coordinates $i$ and $\ell$, and $u_{\ell,i} > u_{\ell,\ell}$,
    \item[(iii)] $\sum_{k \in [j]} u_{j, k} < 2(u_{j,i} + u_{j,j})$,
\end{itemize}
If $M$ satisfies condition ($s$) for some $s \in \{{\rm i},\, {\rm ii},\, {\rm iii}
\}$ but $i, j$ are not clear from context then we write $M$ satisfies condition $(i,j)$-($s$).
\end{definition}

We now state the main result of this section, which characterises the distance-reducing minimal Markov bases for monomial curves of the first kind. 

\begin{theorem}\label{thm: characterisation dist red first kind}
    Let $M$ be a minimal Markov basis for $A$. Then the following are equivalent:
    \begin{itemize}
        \item[(1)] $M$ is distance reducing,
        \item[(2)] $M$ is distance reducing for the circuits of $A$,
        \item[(3)] $M$ satisfies $R_{i,j}$ for every $2 \le i < j \le n$.
    \end{itemize}
\end{theorem}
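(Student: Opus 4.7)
The plan is to prove the three conditions are equivalent by establishing the cycle $(1) \Rightarrow (2) \Rightarrow (3) \Rightarrow (1)$. The first implication is immediate, since every circuit is an element of $\ker(A)$.

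For $(2) \Rightarrow (3)$, I would fix $2 \le i < j \le n$ and study the circuit $z := z_{i,j}$, whose support is $\{i,j\}$ with $z_i = a_j/\gcd(a_i,a_j)$ and $z_j = -a_i/\gcd(a_i,a_j)$. Using the canonical form of each row $u_k \in M$ (nonnegative entries in positions $1,\dots,k-1$, a single negative entry $-u_{k,k}$ in position $k$, and zeros beyond), a componentwise check of applicability via $z^{\pm} \ge u_k^{+}$ and $z^{\pm} \ge u_k^{-}$ identifies four families of applicable moves: the row $u_j$ acting on $z^-$; a circuit $u_k$ on $\{i,k\}$ with $i < k$ acting on $z^+$; the row $u_i$ itself, applicable on $z^+$ via $u_i^- = u_{i,i}\,e_i \le z^+$ whenever $u_{i,i} \le a_j/\gcd(a_i,a_j)$; and a circuit $u_k$ on $\{j,k\}$ with $j < k$ acting on $z^-$. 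A direct $1$-norm computation in each case shows that distance reduction corresponds exactly to condition (iii) for $u_j$, to condition (ii) for a circuit $u_k$ with $i < k < j$, and to condition (i) for $u_i$, while the residual subcases (particularly $k > j$) must be shown to force one of (i)--(iii) by exploiting the first-kind gluing property to constrain the supports of rows of $M$.

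For $(3) \Rightarrow (1)$, I would argue by induction on $n$, using $A_n = A_{n-1} \circ a_n$. The base case $n = 2$ is immediate since $\ker(A_2) = \ZZ u_2$. For the inductive step, take $z \in \ker(A_n)$. When $z_n = 0$, we have $z \in \ker(A_{n-1})$, and the sub-basis $M_{n-1} = M \setminus \{u_n\}$ reduces $z$ by the inductive hypothesis, noting that the conditions $R_{i,j}$ for $j \le n-1$ restrict to $M_{n-1}$. When $z_n \neq 0$, after negating if necessary we may assume $z_n < 0$, so writing $z = \sum_{k=2}^n \lambda_k u_k$ yields $\lambda_n > 0$. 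A case analysis on the sign pattern of the remaining coordinates of $z$ should pick out an index $i$ for which $R_{i,n}$ supplies the required reducer: (iii) handles the case where $u_n$ directly reduces $z$, (ii) furnishes an intermediate circuit reducer, and (i) provides $u_i$ via $u_i^-$.

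The principal obstacle is the combinatorial case analysis in both directions. In $(2) \Rightarrow (3)$, the subtle point is controlling the residual applicable moves (a circuit $u_k$ with $k > j$ on $\{i,k\}$ or $\{j,k\}$) so that they do not produce reduction mechanisms outside the three conditions; the first-kind gluing hypothesis must be used carefully to relate the minimality constraints on such $u_k$ back to $u_i$ and $u_j$. In $(3) \Rightarrow (1)$, the challenge is to match every sign pattern of a general $z$ to the correct index $i$ and the correct condition of $R_{i,n}$. The overall proof structure parallels that of Theorem~\ref{thm: dim3 ci} but is combinatorially more delicate, the additional layer being condition (i), which is a genuinely higher-dimensional phenomenon not appearing in the three-dimensional case.
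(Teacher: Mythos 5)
Your overall architecture matches the paper's exactly: $(1)\Rightarrow(2)$ is immediate, $(2)\Rightarrow(3)$ is proved by analysing which moves can reduce the circuit $z_{i,j}$, and $(3)\Rightarrow(1)$ is proved by induction on $n$ using the gluing $A_n = A_{n-1}\circ a_n$, splitting on whether $z_n = 0$. The correspondence you draw between the three reduction mechanisms and conditions (i), (ii), (iii) is also the right one. However, the proposal has a genuine gap: in both directions you name the hard case and then defer it, and that deferred case is where essentially all the work lies.

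For $(2)\Rightarrow(3)$, when the only element of $M$ reducing $z_{i,j}$ is a circuit $u_\ell$ with $\ell > j$ (supported on $\{i,\ell\}$ or $\{j,\ell\}$ with $u_{\ell,k} > u_{\ell,\ell}$), nothing about $R_{i,j}$ can be read off from $z_{i,j}$ alone, and "exploiting the first-kind gluing property to constrain the supports" is not a mechanism. The paper resolves this by proving a \emph{strengthened} statement by reverse induction on $j$ — namely that $z_{i,j}$ is reduced by some $u_r$ with $r \le j$ — and by interrogating the auxiliary circuits $z_{m,\ell}$ for intermediate $m$: one inner reverse induction (Claim~1) shows that $(m,\ell)$-(i) holds for all $m$ strictly between the support index of $u_\ell^+$ and $\ell$, which in the case $\supp(u_\ell^+)=\{i\}$ forces $(j,\ell)$-(i) and hence $(i,j)$-(iii) by a short contradiction; a second inner induction (Claim~2) handles $\supp(u_\ell^+)=\{j\}$ and manufactures either the circuit witnessing $(i,j)$-(ii) or condition $(i,j)$-(i). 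Without these auxiliary circuits the implication does not close. Similarly, for $(3)\Rightarrow(1)$ with $z_n\neq 0$, the paper's argument is not a single case match: when $R_{i,n}$ holds via (ii) the supplied circuit $u_{j}$ may fail to be applicable (if $u_{j,i} < z_i$ one deduces $\lambda_j = 0$), and one must iterate the argument at a strictly larger index pair and prove termination. Your sketch ("a case analysis on the sign pattern should pick out an index") omits both the choice $i = \max\{k : \lambda_k < 0\}$ and this iteration. As written, the proposal is a faithful road map of the paper's proof with the two central inductive engines left unbuilt.
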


\begin{proof}
    By definition it follows that $(1) \implies (2)$. By Theorem~\ref{thm: M dist red implies Rij} we have that $(2) \implies (3)$. By Theorem~\ref{thm: Rij implies M dist red} we have $(3) \implies (1)$, which concludes the proof.
\end{proof}

\begin{theorem}\label{thm: M dist red implies Rij}
    If $M$ reduces the distance of the circuits of $A$ then $M$ satisfies $R_{i,j}$ for every $2 \le i < j \le n$.
\end{theorem}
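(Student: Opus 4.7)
The plan is to fix indices $2 \le i < j \le n$ and consider a sufficiently large scalar multiple $z = \lambda\,\big((a_j/d)\,e_i - (a_i/d)\,e_j\big)$ of the minimal circuit of $A$ supported on coordinates $i$ and $j$, where $d = \gcd(a_i,a_j)$. By hypothesis there exists a row $u_k$ of $M$ that is applicable to $z$ and strictly reduces $\|z\|$; the task is to extract from this one of the three conditions defining $R_{i,j}$.

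First I would perform a support analysis of applicability. Since $z^+$ and $z^-$ are each supported on a single coordinate ($i$ and $j$ respectively), while $u_k$ has support in $\{1,\dots,k\}$ with its unique negative entry at position $k$, the applicability conditions $z^{\pm} \ge u_k^{\pm}$ admit very few solutions. A direct case check yields: for $k < i$ no applicability is possible; for $k = i$ the only mode is $z^+ \to z^+ + u_i$ (requiring $z_i \ge u_{i,i}$); for $i < k < j$ applicability forces $u_k$ to be a circuit supported exactly on $\{i,k\}$; for $k = j$ the only mode is $z^- \to z^- + u_j$; and for $k > j$ the row $u_k$ must itself be a circuit on $(i,k)$ or on $(j,k)$.

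Second, I would carry out the direct norm computations for $k \in \{i, i+1, \dots, j\}$. One checks that $\|z + u_i\| - \|z\| = (u_{i,1} + \cdots + u_{i,i-1}) - u_{i,i}$, so reduction is equivalent to condition (i); for $i < k < j$ with $u_k$ a circuit on $(i,k)$, $\|z - u_k\| - \|z\| = u_{k,k} - u_{k,i}$, so reduction is $u_{k,i} > u_{k,k}$, i.e.\ condition (ii) with $\ell = k$; and for $k = j$, $\|z - u_j\| - \|z\| = \sum_{m < j,\, m \ne i} u_{j,m} - u_{j,i} - u_{j,j}$, which rearranges to $\sum_{m \in [j]} u_{j,m} < 2(u_{j,i} + u_{j,j})$, i.e.\ condition (iii). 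In each of these sub-cases the relevant condition of $R_{i,j}$ is obtained immediately.

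The main obstacle is the remaining case $k > j$, where $u_k$ must itself be a circuit on $(i,k)$ with $u_{k,i} > u_{k,k}$ or on $(j,k)$ with $u_{k,j} > u_{k,k}$. The strategy is to invoke the hypothesis on an auxiliary circuit: in the $(j,k)$ situation a parallel support/norm analysis shows that $u_k$ \emph{fails} to reduce the minimal circuit on $(i,k)$, because the putative reduction inequality becomes $u_{k,j} < u_{k,k}$, contradicting the assumption. Consequently the $(i,k)$-circuit must be reduced by some other row $u_l$ with $l < k$, and iterating this descent through auxiliary circuits produces a strictly decreasing chain of indices that must terminate inside $\{i, i+1, \dots, j\}$; at that point the local norm computation of the previous paragraph delivers condition (i), (ii), or (iii) of $R_{i,j}$. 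The edge case $l = j$ with $u_j$ a circuit on $(i,j)$ yields (iii) trivially, since then $\sum_{m \in [j]} u_{j,m} = u_{j,i} + u_{j,j}$. A symmetric descent handles the subcase where $u_k$ is a circuit on $(i,k)$. Verifying termination of this descent is the technical heart of the argument, and relies crucially on the first-kind gluing structure, which pins $u_{k,k}$ down to the value $\lcm(\gcd(a_1,\dots,a_{k-1}), a_k)/a_k$ and thereby controls how the chain of indices can evolve.
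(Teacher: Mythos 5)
Your support analysis and the local norm computations for $k \in \{i, i+1, \dots, j\}$ are correct and agree with the paper's: the cases where the reducing row has index in $\{i,\dots,j\}$ deliver conditions (i), (ii), (iii) exactly as you describe. The gap is in the remaining case $k > j$, which is the technical heart of the theorem, and your descent does not close it. When you pass from $z_{i,j}$ to the auxiliary circuit $z_{i,k}$ (or $z_{j,k}$), the hypothesis only tells you that \emph{some} row of $M$ reduces it; you assert without justification that this row $u_l$ has $l < k$. But a row $u_l$ with $l > k$ can perfectly well be applicable to $z_{i,k}$ (it merely has to be a circuit supported on $(i,l)$ or $(k,l)$), so your ``strictly decreasing chain of indices'' need not decrease: the chain can jump upward and can also branch onto circuits supported on new pairs of coordinates, so neither termination nor the claim that the process ends \emph{inside} $\{i,\dots,j\}$ on an $(i,\cdot)$-circuit follows from what you have written. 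The closing appeal to the first-kind gluing value $\lcm(\gcd(a_1,\dots,a_{k-1}),a_k)/a_k$ of $u_{k,k}$ is not used anywhere concretely and does not control the chain.

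The paper closes exactly this hole by strengthening the statement and running a reverse induction on $j$: the inductive statement is that for every $m > j$ and $i' < m$ the circuit $z_{i',m}$ is reduced by some $u_r$ with $i' \le r \le m$, which caps the index of the reducer of every auxiliary circuit from the outset. Two further nested reverse inductions (Claims~1 and~2 in the paper's proof) then establish condition $(m,\ell)$-(i) for the intermediate indices $m$, which is precisely what disqualifies the intermediate rows as reducers and forces the descent to land on $u_i$, on $u_j$, or on a circuit $u_s$ with $i < s \le j$, yielding conditions (i), (iii), or (ii) respectively. To repair your argument you would need to formulate and prove some analogue of this strengthened inductive hypothesis; as written, the key step of the descent is not valid.
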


\begin{proof}
    Throughout the proof, we write
    \[
        z_{a, b} = (0, \dots, 0, z_a, 0, \dots, 0, -z_b, 0, \dots, 0)
    \]
    for the circuit of $A$ supported on $a, b \in [n]$, with $a < b$. Fix $2 \le i < j \le n$. We show that $M$ satisfies $R_{i,j}$. In addition, we prove that the circuit
    \[
        z := z_{i,j} = (0, \dots, 0, z_i, 0, \dots, -z_j, 0, \dots, 0)
    \]
    is distance reduced by some element $u_r$ with $r \le j$. We do this by reverse induction on $j$. The base case with $j = n$ is trivial.

    For the induction step, for each $m > j$ and for all $i' < m$, we assume that there exists some $i' \le r \le m$ such that $u_r$ reduces the distance of $z_{i',m}$. We call this the first induction hypothesis.
    By assumption, $M$ reduces the distance of $z$, so there exists $u_\ell$ that reduces the distance of $z$. Note that if $\ell < i$, then $u_\ell$ is not applicable to $z$, so we must have $\ell \ge i$. If $\ell \le j$ then we are done. So let us assume that $\ell > j$.

    Since $u_\ell$ is applicable to $z$, it follows that $\supp(u_\ell^+) = \{i\}$ or $\supp(u_\ell^+) = \{j\}$. Hence $u_\ell$ is a circuit 
    \[
        u_{\ell} = (0, \dots, 0, u_{\ell,k}, 0, \dots, 0, -u_{\ell, \ell}, 0, \dots, 0)
    \]
    for some $k \in \{i, j\}$. Since $u_\ell$ reduces the distance of $z$, it follows that $u_{\ell, k} > u_{\ell, \ell}$. 

    \medskip \noindent \textbf{Claim 1.} For each $k < m < \ell$, we have $(m, \ell)$-(i) holds.
    
    \begin{proof}
    We now prove the claim by reverse induction on $m$. For the base case, let $m = \ell-1$ and consider the circuit
    \[
        z_{\ell-1, \ell} = (0, \dots, 0, z_{\ell-1}, -z_{\ell}, 0, \dots, 0).
    \]
    By the first induction hypothesis, we have that either $u_\ell$ or $u_{\ell-1}$ reduces the distance of $z_{\ell-1, \ell}$. Since $u_\ell$ is a circuit with $u_{\ell,k} > u_{\ell,\ell}$, it follows that $u_{\ell}$ does not reduce the distance of $z_{\ell-1, \ell}$. So $u_{\ell-1}$ reduces the distance of $z_{\ell-1, \ell}$. It follows that
    \[
    u_{\ell-1, 1} + u_{\ell-1, 2} + \dots u_{\ell-1, \ell-2} < u_{\ell-1, \ell-1}
    \]
    hence $(\ell-1, \ell)$-(i) holds.

    For the induction step, fix $k < m < \ell-1$ and assume that $(p, \ell)$-(i) holds for every $p \in \{m+1, \dots, \ell-1 \}$. We call this the second induction hypothesis. By assumption $M$ reduces the distance of the circuit
    \[
        z_{m, \ell} = (0, \dots, 0, z_m, 0, \dots, 0, -z_\ell, 0, \dots, 0).
    \]
    By the first induction hypothesis, there exists there exists $r \in \{m, m+1, \dots, \ell\}$ such that $u_r$ reduces the distance of $z_{m, \ell}$. Since $u_\ell$ is a circuit with $u_{\ell,k} > u_{\ell,\ell}$, it follows that $r \neq \ell$. Suppose by contradiction that $r > m$. Since $u_r$ is applicable to $z_{m, \ell}$, it follows that $u_r$ is a circuit 
    \[
        u_r = (0, \dots, 0, u_{r,m}, 0, \dots, 0, -u_{r,r}, 0, \dots, 0)
    \]
    with $u_{r,m} > u_{r,r}$. However, by the second induction hypothesis, we have 
    $u_{r,m} < u_{r,r}$, which is a contradiction. So it follows that $r = m$. Since $u_m$ reduces the distance of $z_{m, \ell}$, it follows that
    \[
        u_{m,1} + u_{m,2} + \dots + u_{m,m-1} < u_{m,m}.
    \]
    Hence $(m,\ell)$-(i) holds. This concludes the proof of the claim.
    \end{proof}
    
    We proceed by taking cases on $k \in \{i, j\}$.

    \medskip \noindent \textbf{Case 1.} Assume $k = i$. Since $k+1 \le j$, by Claim~1, we have that $(j,\ell)$-(i) holds. So
    $u_{j,1} + u_{j,2} + \dots + u_{j,j-1} < u_{j,j}$.
    We now show that $(i,j)$-(iii) holds by contradiction. Assume that
    \[
        u_{j,1} + \dots + u_{j,i-1} + u_{j,i+1} + \dots + u_{j,j-1} \ge u_{j,i} + u_{j,j}. 
    \]
    So we have 
    \[
        2u_{j,i} + u_{j,j} \le u_{j,1} + \dots + u_{j,j-1} < u_{j,j}.
    \]
    Hence $2u_{j,i} < 0$, which is a contradiction. So we have shown that $(i,j)$-(iii) holds. Hence $M$ satisfies $R_{i,j}$. It follows easily that $u_j$ reduces the distance of $z$, which proves the induction step.

    \medskip \noindent \textbf{Case 2.} Assume $k = j$. We show the following claim.
    
    \medskip \noindent \textbf{Claim 2.} For each $m \in \{i, i+1, \dots, j-1\}$ either: there exists a circuit $u_s$ for some $m < s \le j$ with
    \[
        u_s = (0, \dots, 0, u_{s,m}, 0, \dots, 0, -u_{s,s}, 0, \dots, 0)
    \]
    such that if $(m, s) \neq (i,j)$ then $u_{s,m} > u_{s,s}$; or 
    $(m, j)$-(i) holds.
    
    \begin{proof}
    We proceed by reverse induction on $m$. For the base case let $m = j-1$ and consider the circuit
    \[
        z_{j-1, \ell} = (0, \dots, 0, z_{j-1}, 0, \dots, 0, -z_{\ell},0, \dots, 0).
    \]
    By the first inductive hypothesis, we have that $z_{j-1, \ell}$ is distance reduced by some element $u_r$ with $j-1 \le r \le \ell$. Since $u_\ell$ is a circuit with $u_{\ell, j} > u_{\ell, \ell}$, it follows that $u_\ell$ does not reduce the distance of $z_{j-1, \ell}$. Assume by contradiction that $j+1 \le r < \ell$, then we have that $u_r$ is a circuit
    \[
        u_r = (0, \dots, 0, u_{r,j-1}, 0, \dots, 0, -u_{r,r}, 0, \dots, 0)
    \]
    with $u_{r,j-1} > u_{r,r}$. However, by Claim~1, we have $(r,\ell)$-(i) holds, which gives us $u_{r,j-1} < u_{r,r}$, which is a contradiction. So we have that $r \in \{j-1, j\}$. If $r = j$, then $u_j$ is a circuit
    \[
        u_j = (0, \dots, 0, u_{j,j-1}, -u_{j,j}, 0, \dots, 0).
    \]
    Since $u_r$ reduces the distance of $z_{j-1, \ell}$, if $i < j-1$, then it follows that $u_{j,j-1} > u_{j,j}$, as desired. Otherwise, if $r = j-1$, then it immediately follows that $(j-1,j)$-(i) holds. This concludes the proof of the base case.

    For the induction step, fix $i \le m < j-1$ and assume that the claim holds for all $p \in \{m+1, \dots, j-1 \}$.   
    Consider the circuit
    \[
        z_{m, \ell} = (0, \dots, 0, z_m, 0, \dots, 0, -z_{\ell}, 0, \dots, 0).
    \]
    By assumption, $M$ reduces the distance of the circuit $z_{m, \ell}$. By the first inductive hypothesis $z_{m, \ell}$ is distance reduced by $u_r$ for some $m \le r \le \ell$. Recall that $u_\ell$ is a circuit with $u_{\ell,j} > u_{\ell,\ell}$, so $u_\ell$ does not reduce the distance of $z_{m, \ell}$, hence $r \neq \ell$.
    
    Let $P = \{p \in \{m+1, \dots, j-1\} : (p, j)$-(i) holds$ \}$ and $P^c = \{m+1, \dots, j-1\} \setminus P$ be its complement. By Claim~1, we have that $(p, \ell)$-(i) holds for all $p \in \{j+1, \dots, \ell-1\}$. Suppose that for some $p \in P \cup \{j+1, \dots, \ell-1 \}$, the move $u_p$ is applicable to $z_{m, \ell}$. Then it follows that $u_p$ is a circuit
    \[
        u_p = (0, \dots, 0, u_{p, m}, 0, \dots, 0, -u_{p,p}, 0, \dots, 0).
    \]
    Since one of $(p, j)$-(i) or $(p,\ell)$-(i) holds, it follows that $u_{p,m} < u_{p,p}$. So $u_p$ does not reduce the distance of $z_{m, \ell}$. Hence $r \neq p$. So we have that $r \in P^c \cup \{m, j\}$. 
    
    If $r = j$, then it follows that $u_j$ is a circuit
    \[
        u_j = (0, \dots, 0, u_{j,m}, 0, \dots, 0, -u_{j,j}, 0, \dots, 0).
    \]
    Since $u_j$ reduces the distance of $z_{m, \ell}$, if $m > i$ then we have $u_{j,m} > u_{j,j}$, so the claim holds. 
    
    If $r = m$ then it follows that $(m,j)$-(i) holds.

    Suppose that $r \in P^c$. Then it follows that $u_r$ is a circuit
    \[
        u_r = (0, \dots, 0, u_{r,m}, 0, \dots, 0, -u_{r,r}, 0, \dots, 0).
    \]
    Since $u_r$ reduces the distance, we must have that $u_{r,m} > u_{r,r}$, which concludes the proof of the claim.
    \end{proof}

    So by Claim~2, with $m = i$, either: there exists a circuit $u_s$ for some $i < s \le j$ with
    \[
        u_s = (0, \dots, 0, u_{s,i}, 0, \dots, 0, -u_{s,s}, 0, \dots, 0),
    \]
    such that if $s \neq j$ then $u_{s,i} > u_{s,s}$; or $(i,j)$-(i) holds. In the latter case, we have that $M$ satisfies $R_{i,j}$ and $u_i$ reduces the distance of $z$ and we are done. In the former case, it follows that $(i,j)$-(ii) holds, so $M$ satisfies $R_{i,j}$. In this case, we have that $u_s$ reduces the distance of $z$. This concludes the proof of the result.
\end{proof}

\begin{theorem} \label{thm: Rij implies M dist red}
    Suppose that $M$ satisfies $R_{i,j}$ for all $2 \le i < j \le n$ then $M$ is distance reducing.
\end{theorem}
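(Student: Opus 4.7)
The plan is to proceed by strong induction on $n$. The base case $n=2$ is trivial: $M = \{u_2\}$ spans $\ker(A) = \ZZ \cdot u_2$, and $u_2$ directly reduces the distance of every nonzero multiple of itself.

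For the inductive step with $n \ge 3$, observe that each condition $R_{i,j}$ with $2 \le i < j \le n-1$ involves only the first $n-2$ rows $u_2, \dots, u_{n-1}$ of $M$. Hence the truncation $M_{n-1} := \{u_2, \dots, u_{n-1}\}$ is a minimal Markov basis for $A_{n-1}$ inheriting all of the relevant $R_{i,j}$, and by the induction hypothesis it is distance reducing for $\ker(A_{n-1})$. Fix a nonzero $z \in \ker(A)$. The gluing $A = A_{n-1} \circ a_n$ gives a unique decomposition $z = (y, 0) + \lambda u_n$ with $y \in \ker(A_{n-1})$ and $\lambda \in \ZZ$. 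When $\lambda = 0$, the inductive hypothesis supplies a move in $M_{n-1} \subseteq M$ that reduces the distance of $z$; when $\lambda \ne 0$ and $y = 0$, $z$ is a nonzero multiple of $u_n$ and $u_n$ reduces the distance directly. By replacing $z$ with $-z$ if necessary, it remains to treat the case $\lambda > 0$ and $y \ne 0$, where $z_n = -\lambda u_{n,n} < 0$ and $z_k = y_k + \lambda u_{n,k}$ for $k < n$.

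The core computation in this case is
\[
    ||z - u_n|| - ||z|| = -u_{n,n} + \sum_{k < n}\bigl(|z_k - u_{n,k}| - |z_k|\bigr),
\]
where each summand on the right is at most $u_{n,k}$, with a strict saving of $2u_{n,k}$ whenever $z_k \ge u_{n,k}$. Alternative (i) of $R_{i,n}$ reads $\sum_{k<n} u_{n,k} < u_{n,n}$, and directly gives $||z - u_n|| < ||z||$. Otherwise, since $y$ is a nonzero element of $\ker(A_{n-1})$ and the entries of $A_{n-1}$ are positive, $y$ has some strictly positive coordinate $y_k$; for such $k$ one gets $z_k \ge \lambda u_{n,k} \ge u_{n,k}$. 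If some such $k$ also satisfies $u_{n,k} > 0$, then I set $i := k$ and invoke alternative (iii) of $R_{i,n}$, rearranged as $\sum_{k' < n} u_{n,k'} < 2 u_{n,i} + u_{n,n}$. Combining this with the saving of $2 u_{n,i}$ at coordinate $i$ yields $||z - u_n|| < ||z||$.

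The residual subcase occurs when $u_{n,k} = 0$ for every $k$ with $y_k > 0$. Here I fall back to alternative (ii) of some $R_{i,n}$, which furnishes a circuit $u_\ell \in M$ supported on $\{i, \ell\}$ with $u_{\ell,i} > u_{\ell,\ell}$ and $i < \ell < n$. By choosing the index $i$ to match the sign pattern of $z_i$ and $z_\ell$, one checks applicability of $u_\ell$ and uses the strict inequality $u_{\ell,i} > u_{\ell,\ell}$ to conclude $||z \pm u_\ell|| < ||z||$ in the appropriate signed direction. I expect the main obstacle to be precisely this final subcase: the sign analysis needed to identify the right $i$ and verify both the applicability of $u_\ell$ and the strict norm decrease is the key technical step, possibly requiring an additional descent argument on $||z||$ to handle intermediate configurations.
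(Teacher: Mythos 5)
Your overall architecture (induction on $n$, splitting off the case $z_n=0$, trying $u_n$ first via the saving $2u_{n,i}$ at a coordinate with $(z)_i\ge u_{n,i}$, and falling back to circuits) matches the paper's, and your norm computation for alternative (iii) is correct. But there are two genuine problems. First, you have misread alternative (i): condition (i) of $R_{i,n}$ is $u_{i,1}+\dots+u_{i,i-1}<u_{i,i}$, an inequality about the row $u_i$, not the inequality $\sum_{k<n}u_{n,k}<u_{n,n}$ about the row $u_n$ that you state. The latter is sufficient for $u_n$ to reduce $z$, but it is not among the hypotheses, so your first branch is vacuous as written. Second, and more seriously, $R_{i,n}$ only guarantees that \emph{at least one} of (i), (ii), (iii) holds, whereas your argument repeatedly ``invokes'' whichever alternative is convenient (e.g.\ ``I set $i:=k$ and invoke alternative (iii) of $R_{i,n}$''). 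If for your chosen $k$ it is (i) or (ii) of $R_{k,n}$ that holds rather than (iii), you have no argument: using (i) means applying the move $u_k$ instead of $u_n$, and that requires knowing $(z)_k\ge u_{k,k}$, which cannot be read off from your decomposition $z=(y,0)+\lambda u_n$. This is exactly why the paper works with the full expansion $z=\sum_i\lambda_i u_i$ and the sign set $N=\{i:\lambda_i<0\}$: taking $i=\max(N)$ forces $(z)_i\ge u_{i,i}$, which makes the (i)-branch usable, and the case $N=\emptyset$ needs a separate argument starting from $u_{2,1}>u_{2,2}$.

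Finally, the ``residual subcase'' you defer (alternative (ii)) is not a routine verification: in the paper it is an iterative descent in which failure of the circuit $u_j$ to be applicable forces $\lambda_j=0$, which in turn activates $R_{j,n}$, produces a new circuit $u_{j'}$ with $j'>j$, and so on until either $u_n$ or some circuit applies. Without the coefficients $\lambda_i$ you cannot even start this descent, since the step ``$u_{j,i}>z_i$ implies $\lambda_j=0$'' is what drives it. So the proposal captures the right first-order ideas but has a concrete misstatement of the hypotheses and leaves unproved precisely the case analysis and descent that constitute the substance of the theorem.
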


\begin{proof}
    We prove the result by induction on $n$. For the base case, take $n = 2$, and observe that the result holds trivially because $\ker(A)$ is generated by a single element. Fix $n > 2$. Assume that $R_{i,j}$ holds for all $2 \le i < j \le n$ and fix some $z \in \ker(A)$. We prove that there exists a move in $M$ that reduces the distance of $z$. We recall our convention that for each $i \in [n]$, the notation $(z)_i$ is the $i$th coordinate of $z$ and $z_i = |(z)_i|$ is its absolute value.

    Suppose $z_n = 0$. Let $A' = \begin{pmatrix}
        a_1 & \dots & a_{n-1}
    \end{pmatrix}$ and $M' = \{u'_2, \dots, u'_{n-1}\}$ where 
    \[
        u'_i = (u_{i,1}, u_{i,2}, \dots, u_{i,i-1}, -u_{i,i}, 0, \dots, 0) \in \ker(A')
    \]
    for each $i \in \{2, \dots, n-1\}$. By the gluing of $A$, we have that $M'$ is a Markov basis for $A'$. Observe that $M'$ satisfies $R_{i,j}$ for all $2 \le i < j \le n-1$, so by the inductive hypothesis, $M'$ is distance reducing for $\ker(A')$. In particular, $M'$ reduces the distance of the move
    \[
        z' = ((z)_1, \dots, (z)_{n-1}) \in \ker(A').
    \]
    Hence, there exists $u'_i \in M'$ such that $u'_i$ reduces the distance of $z'$. It follows immediately that $u_i \in M$ reduces the distance of $z$.

    So we may assume that $z_n \neq 0$. Without loss of generality, we may assume that $(z)_n < 0$. Since $M$ is a Markov basis and $z \in \ker(A)$, we may write 
    \[
        z = \sum_{i = 2}^n \lambda_i u_i
    \]
    for some integers $\lambda_i$. Since $(z)_n < 0$, it follows that $\lambda_n > 0$. Consider the set $N = \{i \in \{2, \dots, n-1\} : \lambda_i < 0\}$. We take cases on whether $N$ is the empty set.

    \medskip \noindent \textbf{Case 1.} Assume that $N \neq \emptyset$. Let $i = \max(N)$. So for all $i < j < n$, we have that $\lambda_j \ge 0$ and $\lambda_i < 0$. Now consider the $i$th coordinate of $z$:
    \[
        (z)_i = (-\lambda_i) u_{i,i} + \lambda_{i+1} u_{i+1,i} + \dots 
        + \lambda_{n-1} u_{n-1,i} + \lambda_{n} u_{n,i}. 
    \]
    Since $M$ satisfies $R_{i,n}$, we have that one of the conditions $(i,n)$-(i), $(i,n)$-(ii), or $(i,n)$-(iii) holds. If $(i,n)$-(i) holds then
    $u_{i,1} + \dots + u_{i,i-1} < u_{i,i} \le z_i$ and so $u_i$ reduces the distance of $z$. If $(i,n)$-(iii) holds then we have
    \[
        u_{n,1} + \dots + u_{n,i-1} + u_{n,i+1} + \dots + u_{n,n-1} < 
        u_{n,i} + u_{n,n}.
    \]
    Since $z_n \ge u_{n,n}$ and $z_{n,i} \ge u_{n,i}$, it follows immediately that $u_n$ is applicable to and reduces the distance of $z$. So it remains to consider the case when condition $(i,n)$-(ii) holds.

    Suppose that $(i,n)$-(ii) holds. Then there exists $j \in \{i+1, \dots, n-1 \}$ such that $u_j$ is a circuit
    \[
        u_j = (0, \dots, 0, u_{j,i}, 0, \dots, 0, -u_{j,j}, 0, \dots, 0)
    \]
    with $u_{j,i} > u_{j,j}$. Clearly if $u_{j,i} \ge z_i$, then $u_j$ is applicable to and reduces the distance of $z$. We now show that if $u_{j,i} < z_i$, then $z$ is distance reduced by $u_n$ or a circuit $u_{j'}$ for some $j' > j$. Assume that $u_{j,i} < z_i$. Since $\lambda_j \ge 0$, it follows that $\lambda_j = 0$. We mark this point in the proof with $(*)$.
    Next, we consider the $j$th coordinate of $z$:
    \[
    (z)_j = \lambda_{j+1} u_{j+1,j} + \dots + \lambda_n u_{n,j}.
    \]
    By assumption, $M$ satisfies $R_{j,n}$ so one of $(j,n)$-(i), $(j,n)$-(ii), or $(j,n)$-(iii) holds. Since $u_j$ is a circuit with $u_{j,i} > u_{j,j}$, it follows that $(j,n)$-(i) does not hold. If $(j,n)$-(iii) holds then it follows that $u_n$ reduces the distance of $z$. However, if $(j,n)$-(ii) holds then there exists $j' \in \{j+1, \dots, n-1 \}$ such that the move $u_{j'}$ is a circuit
    \[
        u_{j'} = (0, \dots, 0, u_{j', j}, 0, \dots, 0, -u_{j',j'}, 0, \dots, 0)
    \]
    where $u_{j',j} > u_{j',j'}$. If $u_{j',j} \ge z_j$, then $u_j$ is applicable and reduces the distance of $z$ and we are done. Otherwise, if $u_{j', i < z_j}$, then recall that $\lambda_{j'} \ge 0$, so we deduce that $\lambda_{j'} = 0$. We have now reached a point in the proof with exactly the same premises as $(*)$ except $i,j$ are replaced with the strictly larger values $j,j'$. Note that $j' < n$, so if we repeatedly apply this argument, then there are only finitely many steps until either $z$ is reduced by $u_n$ or a circuit $u_{j'}$. This concludes the proof for this case.

    \medskip \noindent \textbf{Case 2.} Assume that $N = \emptyset$. So we have $\lambda_i \ge 0$ for each $i \in \{2, \dots, n-1\}$. Consider the first coordinate $z_1$ of $z$ and the coefficient $\lambda_2 \ge 0$. If $\lambda_2 > 0$ then it follows that $u_2$ is applicable to $z$ because $u_{2,1} \le z_1$. By assumption $u_{2,1} > u_{2,2}$, so $u_2$ reduces the distance of $z$ and we are done. 
    
    We proceed to show that $z$ is either distance reduced by $u_n$ or a circuit $u_j$ for some $j < n$. So let us assume that $u_{2,1} > z_1$, and so $\lambda_2 = 0$. Therefore 
    $z_2 = \lambda_3 u_{3,2} + \dots + \lambda_n u_{n,2} \ge u_{n,2}$.
    By our original assumption, $M$ satisfies $R_{2,n}$ so one of $(2,n)$-(i), $(2,n)$-(ii), or $(2,n)$-(iii) holds. Since $u_{2,1} > u_{2,2}$, it follows that $(2,n)$-(i) does not hold. If $(2,n)$-(iii) holds then we have
    \[
        u_{n,1} + u_{n_3} + u_{n_4} + \dots + u_{n,n-1} < u_{n,2} + u_{n,n}.
    \]
    Since $u_{n,1} \le z_2$ and $u_{n,n} \le z_n$, it follows that $u_n$ reduces the distance of $z$. So it remains to consider the case when $(n,2)$-(ii) holds. In this case, there exists $i \in \{3, \dots, n-1\}$ such that $u_i$ is a circuit
    \[
        u_i = (0, u_{i,2}, 0, \dots, 0, -u_{i,i},0, \dots, 0)
    \]
    with $u_{i,2} > u_{i,i}$. If $u_{i,2} \le z_2$, then $u_i$ is applicable and reduces the distance of $z$. Otherwise if $u_{i,2} > z_2$, then it follows that $\lambda_i = 0$. We mark this point in the proof with $(*)$. In this case, we have that $M$ satisfies $R_{i,n}$ so one of $(i,n)$-(i), $(i,n)$-(ii), or $(i,n)$-(iii) holds. Since $u_i$ is a circuit with $u_{2,i} > u_{i,i}$, it follows that $(i,n)$-(i) does not hold. If $(i,n)$-(iii) holds then $u_n$ is applicable and reduces the distance of $z$. It remains to consider the case when $(i,n)$-(ii) holds. So there exists $j \in \{i+1, \dots, n-1\}$ such that $u_j$ is a circuit
    \[
        u_j = (0, \dots, 0, u_{j,i}, 0, \dots, 0, -u_{j,j}, 0, \dots, 0)
    \]
    with $u_{j,i} > u_{j,j}$. If $u_{j,i} \le z_i$, then $u_j$ is applicable and reduces the distance of $z$. Otherwise if $u_{j,i} > z_i$, then it follows that $\lambda_j = 0$. We have now reached a point a point in the proof with the same premises as $(*)$ except $2,i$ is replaced with the strictly larger values $i,j$. Note that $j < n$, so if we repeatedly apply the same argument, then there are only finitely many steps until $z$ is distance reduced by either $u_n$ or a circuit $u_j$. This concludes the proof of this case.
    
    \medskip

    So, in each case, we have shown that $z$ is distance reduced by some element $u_i \in M$. Since $z$ was arbitrary, $M$ is distance reducing.
\end{proof}

\section{Complete intersection monomial curves}\label{sec: ci mon curves}

In this section we prove that the distance reduction property governs the pattern of gluings for complete intersection monomial curves.

\begin{theorem}\label{thm: dist red ci implies first kind}
    Let $A \in \ZZ^{1 \times n}$ be a complete intersection.
    If a minimal Markov basis reduces the distance of the circuits of $A$, then $A$ admits a gluing of the first kind.
\end{theorem}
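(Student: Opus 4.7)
I proceed by induction on $n$. The base cases $n \leq 2$ are immediate since any matrix with at most two columns is trivially a first-kind gluing. For the inductive step with $n \geq 3$, I use that $A$ being a complete intersection means $A = B \circ C$ for some gluing, and the corresponding minimal Markov basis decomposes as $M = (M_B \times \{0\}) \cup (\{0\} \times M_C) \cup \{u\}$, where $u$ is the gluing move satisfying $\supp(u^+) \subseteq B$-indices and $\supp(u^-) \subseteq C$-indices. The aim is to show that some gluing of $A$ has a singleton part; once we have $A = B \circ a_k$ with $C = \{a_k\}$, a separate argument verifying that $M_B$ itself reduces the distances of the circuits of $B$ (using that $u^-$ is supported on the single coordinate $k \notin B$, making $u$ rarely applicable to $B$-only circuits) combined with the induction hypothesis on $B$ yields that $B$ is first kind, hence $A = B \circ a_k$ is first kind.

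To produce a gluing with a singleton part, I argue by contradiction. Pick a gluing $A = B \circ C$ that minimises $\min(|B|,|C|)$, and assume this minimum is at least $2$. I will exhibit a mixed circuit
\[
z_{i,j} = (0, \dots, 0, a_j/d, 0, \dots, 0, -a_i/d, 0, \dots, 0), \qquad d = \gcd(a_i, a_j),
\]
supported on a $B$-coordinate $i$ and a $C$-coordinate $j$, that is not reduced by any element of $M$, contradicting the hypothesis. A direct support analysis shows that a move $v \in M_B$ is applicable to $z_{i,j}$ only if $v$ is a $B$-circuit containing $i$ (since the $C$-coordinate $j$ lies outside $B$); analogously for $M_C$ and $j$; and $u$ is applicable only when $\supp(u^+) \subseteq \{i\}$ or $\supp(u^-) \subseteq \{j\}$. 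Choosing $i$ and $j$ to be the indices with maximal $a$-value in $B$ and $C$ respectively forces the $B$- and $C$-circuits applicable to $z_{i,j}$ to strictly increase its distance (they run from larger to smaller $a$-value, in the direction opposite to what reduction requires), so they never reduce distance.

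The principal obstacle is ruling out the gluing move $u$, specifically the case in which $\supp(u^+)$ or $\supp(u^-)$ has size one. If, say, $\supp(u^+) = \{i_0\}$, then $u$ witnesses a gluing of $\{a_{i_0}\}$ onto $C$, and I expect to refine the gluing of $A$: namely, I would argue that $A$ admits the alternative gluing $A = (B \setminus \{a_{i_0}\}) \circ (C \cup \{a_{i_0}\})$, which either has a singleton side directly or contradicts the assumed minimality of $\min(|B|,|C|)$. This careful book-keeping of alternative gluings, together with the verification (after $|C| = 1$) that $M_B$ indeed reduces distances of $B$-circuits so that the induction hypothesis applies, constitutes the technical core of the argument. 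The first-kind description of minimal Markov bases from Theorem~\ref{thm: characterisation dist red first kind}, applied inductively to $B$ and $C$, provides the structural information about $M_B$ and $M_C$ that drives these support and reduction analyses.
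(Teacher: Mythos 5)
Your overall architecture (peel off one coordinate at a time until $A = B \circ a_k$, then induct on $B$) is a genuinely different route from the paper, which instead proves that the sign matrix of $M$ is ``winnable'' in a combinatorial sign game and organises the argument around a decorated gluing tree. However, your sketch has gaps at exactly the points you flag as the technical core, and they are not routine to fill. The central one is the re-gluing assertion: from $\supp(u^+) = \{i_0\}$ you claim $A = (B \setminus \{a_{i_0}\}) \circ (C \cup \{a_{i_0}\})$. This requires $C \cup \{a_{i_0}\}$ to be a complete intersection and the new pair to satisfy the gluing condition $x\ZZ = \ZZ(B\setminus\{a_{i_0}\}) \cap \ZZ(C \cup \{a_{i_0}\})$; the element $u$ only gives you a point of $\NN\{a_{i_0}\} \cap \NN C$, not that it generates $a_{i_0}\ZZ \cap \ZZ C$, and none of these facts follow from Delorme--Rosales gluing theory without further argument. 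Moreover, even granting the re-gluing, your induction measure does not obviously terminate: the singleton $\supp(u^+)$ forces you to move an element from the $B$-side to the $C$-side, and if $|B| > |C|$ this \emph{increases} $\min(|B|,|C|)$, so minimality of that quantity yields no contradiction. Note also that the case you are trying to rule out is unavoidable: the paper's Lemma~\ref{lem: w cannot reduce circuits on one side} shows that distance reduction of the circuits actually \emph{forces} $|\supp(w^+)| = 1$ or $|\supp(w^-)| = 1$, so your contradiction scheme always lands in the hard branch.

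The final step also has a gap. Once $A = B \circ a_k$, applying the induction hypothesis to $B$ requires that the projection of $M_B$ reduces the distance of every circuit of $B$. But if $u$ is itself a circuit $(0,\dots,0,u_i,0,\dots,0,-u_k)$ with $u_i > u_k$, then $u$ is applicable to, and may reduce, circuits $z_{i,i'}$ supported inside $B$, and there is no a priori reason $M_B$ must also reduce them. The paper handles this with the case analysis in Lemma~\ref{lem: w cannot reduce circuits on one side}, whose conclusion is that one may induct on a side \emph{chosen according to the support of $w$} (comparing $a_r$ with $b_c$ when $w$ is a circuit); in your setup the side to induct on is fixed in advance, so this choice is not available. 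A smaller point: elements of $M_B$ applicable to the mixed circuit $z_{i,j}$ need not be circuits of $B$ (only one of $\supp(v^+)$, $\supp(v^-)$ must equal $\{i\}$), though your maximal-$a$-value argument still shows such moves cannot reduce the distance, so this part is repairable.
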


So, the distance reduction property for complete intersection monomial curves is characterised by the inequalities in Theorem~\ref{thm: characterisation dist red first kind} and Definition~\ref{def: first kind Rij conditions}. In particular, the circuits completely determine whether a minimal Markov basis is distance reducing.

\begin{corollary}\label{cor: ci dist red iff dist red circuits}
    Let $A \in \ZZ^{1 \times n}$ be a complete intersection and $M$ a minimal Markov basis for $A$. Then the following are equivalent:
    \begin{enumerate}
        \item $M$ is distance reducing,
        \item $M$ is reduces the distance of the circuits of $A$.
    \end{enumerate}
\end{corollary}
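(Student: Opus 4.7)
The corollary is essentially a direct consequence of Theorem~\ref{thm: dist red ci implies first kind} combined with Theorem~\ref{thm: characterisation dist red first kind}. My plan is to split the equivalence into the two implications and handle each.

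The direction $(1) \Rightarrow (2)$ is immediate from the definition of distance reducing: if $M$ reduces the distance of every element of $\ker(A)$, then in particular it reduces the distance of every circuit, since circuits lie in $\ker(A)$. No work is required here.

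For the direction $(2) \Rightarrow (1)$, I would proceed in two steps. First, assume that $M$ reduces the distance of the circuits of $A$. Since $A$ is a complete intersection by hypothesis, I invoke Theorem~\ref{thm: dist red ci implies first kind} to conclude that $A$ admits a gluing of the first kind, i.e., $A$ is a monomial curve of the first kind as defined at the start of Section~\ref{sec: monomial curves first kind}. This places $A$ inside the framework of that section.

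Second, with $A$ now known to be of the first kind, I apply Theorem~\ref{thm: characterisation dist red first kind}, which asserts the equivalence of the three conditions (1) $M$ is distance reducing, (2) $M$ reduces the distance of the circuits of $A$, and (3) $M$ satisfies $R_{i,j}$ for every $2 \le i < j \le n$. Since we are assuming condition (2) of that theorem (our hypothesis), the theorem immediately yields that $M$ is distance reducing, which is condition (1) of the corollary. This closes the loop.

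There is no substantive obstacle here beyond correctly invoking the two previous results in sequence; the corollary is a clean packaging of Theorem~\ref{thm: dist red ci implies first kind} (which converts the hypothesis about circuits into the structural statement that $A$ is of the first kind) and Theorem~\ref{thm: characterisation dist red first kind} (which then upgrades circuit-reduction to full distance-reduction). The only thing to be careful about is that Theorem~\ref{thm: dist red ci implies first kind} applies to any minimal Markov basis reducing circuits, so no additional hypothesis on $M$ beyond minimality is needed.
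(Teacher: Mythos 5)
Your proposal matches the paper's proof exactly: $(1)\Rightarrow(2)$ by definition, and $(2)\Rightarrow(1)$ by first invoking Theorem~\ref{thm: dist red ci implies first kind} to conclude that $A$ admits a gluing of the first kind and then applying the equivalence of conditions (1) and (2) in Theorem~\ref{thm: characterisation dist red first kind}. No issues.
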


\begin{proof}
    By definition we have $(1) \implies (2)$. To prove $(2) \implies (1)$, by Theorem~\ref{thm: dist red ci implies first kind} we have that $A$ admits a gluing of the first kind. So by Theorem~\ref{thm: characterisation dist red first kind}, we have that $M$ is distance reducing for $A$.
\end{proof}

We develop tools and give a proof of Theorem~\ref{thm: dist red ci implies first kind} in the following sections. In Section~\ref{sec: condition for gluing of the first kind}, we show that gluings of the first kind may be detected with a combinatorial game. In Section~\ref{sec: distinguished circuit for w}, we set up the main notation for the proof and prove Lemma~\ref{lem: w cannot reduce circuits on one side}, which allows us to use an inductive argument in the proof of the theorem. In Section~\ref{sec: decorated gluing trees}, we introduce gluing trees and their decorations, which are combinatorial objects that track the sign patterns of the Markov basis elements. Section~\ref{sec: decorated gluing trees} concludes with a proof of Theorem~\ref{thm: dist red ci implies first kind}.

\subsection{A condition for gluings of the first kind} \label{sec: condition for gluing of the first kind}

In this section we give a combinatorial condition on the sign pattern of a Markov basis $M$ so that the matrix $A$ admits a gluing of the first kind. 

\begin{definition}
    Let $M = (m_{i,j})$ be a Markov basis for a matrix $A$. The \textit{sign matrix} of $M$ is the matrix $\sign(M) = (\sign(m_{i,j}))$ whose entries are from the set $\{-, 0, +\}$. For ease of notation we notation, we write $(\cdot)$ for the zero entries of the sign matrix.
\end{definition}

\noindent
\textbf{Sign Game.} Let $S \in \{-, 0, +\}^{k \times n}$ be a matrix of signs. A \textit{move} is given by selecting an entry $s_{i,j}$ and deleting the $i$th row and $j$th column of $S$. This move is \textit{valid} if:
\begin{itemize}
    \item the entry $s_{i,j}$ is the only nonzero entry of the $j$th column of $S$ and

    \item the entry $s_{i,j}$ is different from all other entries in the $i$th row of $S$.
\end{itemize}
We say $S$ is \textit{winnable} if there is a sequence of valid moves that deletes every element of $S$; resulting in the empty matrix.
If there is no sequence of valid moves that deletes every entry of $S$, then we say that $S$ is not winnable.

\begin{example}
    The following shows a sequence of valid moves in the sign game:
    \[
    \begin{bmatrix}
        +     & -     & \cdot & \cdot & \cdot & \cdot \\
        \cdot & \cdot & [+]   & -     & \cdot & \cdot \\
        +     & +     & \cdot & -     & \cdot & \cdot \\
        \cdot & -     & \cdot & \cdot & +     & -     \\
        \cdot & +     & \cdot & \cdot & -     & -     \\
    \end{bmatrix},
    \begin{bmatrix}
        +     & -     & \cdot & \cdot & \cdot \\
        +     & +     & [-]   & \cdot & \cdot \\
        \cdot & -     & \cdot & +     & -     \\
        \cdot & +     & \cdot & -     & -     \\
    \end{bmatrix},
    \begin{bmatrix}
        [+]   & -     & \cdot & \cdot \\
        \cdot & -     & +     & -     \\
        \cdot & +     & -     & -     \\
    \end{bmatrix},
    \begin{bmatrix}
        -     & +     & -     \\
        +     & -     & -     \\
    \end{bmatrix}.
    \]
    Each move in given by removing the row and column of the bracketed entry. The final matrix in the sequence admits no valid moves so none of these matrices is winnable.
\end{example}

\begin{proposition}\label{prop: first kind iff winnable}
    Let $A$ be a complete intersection. Let $M$ be a minimal Markov basis for $A$. Then $A$ admits a gluing of the first kind if and only if $\sign(M)$ is winnable.
\end{proposition}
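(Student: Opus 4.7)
The plan is to prove the biconditional by induction on $n$; the base case $n = 1$ is immediate since $M = \emptyset$ and $A = a_1$ is trivially first-kind, while the empty sign matrix is vacuously winnable.

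For $(\Leftarrow)$, suppose $\sign(M)$ is winnable and let $(i,j)$ be the entry removed by the first move of a winning sequence. By the rules, $m_{i,j}$ is the unique nonzero entry in column $j$ and its sign differs from every other entry of row $i$; after flipping $u_i$ if necessary, assume $u_{i,j} < 0$ and $u_{i,k} \ge 0$ for $k \neq j$. I will show that $A = A' \circ a_j$ is a gluing, where $A'$ deletes column $j$. The witness is $x := -u_{i,j}\, a_j$: since $A u_i = 0$ rearranges to $x = \sum_{k \neq j} u_{i,k} a_k$, we have $x \in a_j\NN \cap \NN A'$. To see $x\ZZ = \ZZ A' \cap a_j \ZZ$, take $y = \lambda a_j = \sum_{k \neq j} \lambda_k a_k$ and lift to $v \in \ker(A)$ with $(v)_j = -\lambda$ and $(v)_k = \lambda_k$; expanding $v = \sum_\ell \mu_\ell u_\ell$ against $M$, the $j$-th coordinate forces $-\lambda = \mu_i u_{i,j}$ (since only $u_i$ has nonzero $j$-coordinate), hence $y = \mu_i x$. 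The same expansion with $v_j = 0$ forces $\mu_i = 0$, so $M \setminus \{u_i\}$ with column $j$ deleted from each vector, call it $M'$, spans $\ker(A')$; invoking the Rosales--Morales structural theorem for Markov bases of gluings upgrades this to $M'$ being a minimal Markov basis of $A'$. The residual sign matrix after the first move is precisely $\sign(M')$, hence winnable by the continuation of the winning sequence; by induction $A'$ admits a first-kind gluing, which combines with $A = A' \circ a_j$ to one for $A$.

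For $(\Rightarrow)$, suppose $A$ admits a first-kind gluing, so in particular $A = A' \circ a_j$ for some $j$ with $A'$ also first-kind. I aim to locate an element $u_i \in M$ whose sign pattern realizes a gluing: some column $j^*$ contains $u_i$'s only nonzero ``one-side'' entry (say with $u_{i,j^*} < 0$, $u_{i,k} \ge 0$ for $k \neq j^*$), and $u_i$ is the unique element of $M$ with nonzero $j^*$-coordinate. Granted such $u_i$, the move $(i, j^*)$ is valid in the sign game; after the move, the residual matrix equals $\sign(M_0)$ for a minimal Markov basis $M_0$ of the corresponding $A'$, and by induction $\sign(M_0)$ is winnable. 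To produce $u_i$, I exploit the kernel decomposition $\ker(A) = (\ker(A') \times \{0\}) \oplus \langle z_0 \rangle$ supplied by a gluing together with the classification of minimal Markov bases of gluings: every minimal Markov basis of $A$ can be written in the form $M_0 \sqcup \{u_i\}$ for some first-kind gluing of $A$, where $M_0$ (with column $j^*$ removed) is a minimal Markov basis of $A'$ and $u_i$ is a gluing element.

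The main obstacle lies in this structural claim: that \emph{every} minimal Markov basis of $A$, not just the canonical one, admits a decomposition $M = M_0 \sqcup \{u_i\}$ compatible with some first-kind gluing of $A$. The existence of a first-kind gluing clearly produces at least one Markov basis of this shape via the canonical gluing element $z_0$; extending this to arbitrary minimal Markov bases requires that the column $j^*$ producing the valid first move may depend on $M$, and that any ``re-choice'' of the gluing element among elements with the minimal positive $|\cdot|_{j^*}$-value preserves the single-support-on-one-side sign pattern. Verifying this uniform decomposition --- likely via the classification of gluing elements in \cite{rosales1997presentations, morales2005complete} and the complete intersection hypothesis --- is the delicate technical heart of the argument; the sign-game machinery then mechanically carries the induction through.
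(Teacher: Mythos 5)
Your proposal is essentially correct but takes a genuinely different route from the paper. The paper's proof is non-inductive: it asserts that $A$ admits a gluing of the first kind if and only if, after permuting rows and columns and negating some rows, $\sign(M)$ has the staircase shape of (\ref{eqn: sign matrix of first kind}), and then proves combinatorially that a sign matrix is winnable exactly when it can be brought to that shape (each valid move, read in reverse order, places a row/column pair into the staircase). You instead run an induction that peels off one valid move at a time and, in the direction ``winnable $\Rightarrow$ first kind,'' you explicitly manufacture the gluing: $x = -u_{i,j}a_j$ lies in $\NN A' \cap a_j\NN$ by the sign condition on row $i$, and $x\ZZ = \ZZ A' \cap a_j\ZZ$ because a Markov basis spans $\ker(A)$ as a lattice and column $j$ has a single nonzero entry. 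This is a real gain in rigour: the paper's argument only establishes the combinatorial equivalence with the staircase shape and leaves ``staircase shape $\Rightarrow$ gluing'' implicit, whereas you derive the gluing degree directly from the sign data. Concerning the step you flag as the ``delicate technical heart'' of the forward direction --- that \emph{every} minimal Markov basis of a first-kind gluing splits as $M_0 \sqcup \{u_i\}$ with $M_0$ projecting to a minimal Markov basis of $A'$ and $u_i$ a gluing element with separated signs --- this is exactly the structural fact the paper takes as given (it is the recursive description of $M_n$ at the start of Section~\ref{sec: monomial curves first kind} and the Setup of Section~\ref{sec: distinguished circuit for w}, resting on \cite{rosales1997presentations, morales2005complete}), and the paper's own proof of the proposition also relies on it without re-deriving it. So your worry does not create a gap relative to the paper; you should simply cite that decomposition rather than attempt to reprove it, after which your induction closes in both directions.
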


\begin{proof}
    The matrix $A$ admits a gluing of the first kind if and only if, up a permutation of the rows and columns of $M$ and sign of the rows of $M$, the matrix $\sign(M) = (s_{i,j})$ is given by
    \begin{equation} \label{eqn: sign matrix of first kind}
    \sign(M) = \begin{bmatrix}
        +      & -      & \cdot  & \cdot  & \cdots & \cdot \\
        \oplus & \oplus & -      & \cdot  & \cdots & \cdot \\
        \oplus & \oplus & \oplus & -      & \cdots & \cdot \\
        \vdots & \vdots & \vdots & \ddots & \ddots & \vdots \\
        \oplus & \oplus & \oplus & \cdots & \oplus & - 
    \end{bmatrix}.
    \end{equation}
    The matrix $\sign(M)$ is winnable since the sequence of moves: $s_{n-1,n}, s_{n-2, n-1}, \dots, s_{2,3}, s_{1,2}$ deletes the entire matrix. Conversely, every winnable $(n-1) \times n$ matrix $S = (s_{i,j})$ can be transformed into the above matrix with a permutation of the rows and columns of $S$ and negating some of the rows. To see this, suppose that a winning sequence of moves is given by $s_{i_1,j_1}, s_{i_2, j_2}, \dots, s_{i_{n-1}, j_{n-1}}$. Consider the first move $s_{i_1, j_1}$. Without loss of generality, we may assume that $s_{i_1, j_1}$ is $(-)$ because we may change the sign of the row. We move that entry to the bottom-right corner of the matrix to form a new matrix $M'$ with sign matrix $\sign(M') = (s'_{i,j})$. By the definition of a winnable matrix, the entries above $s'_{n-1, n} = s_{i_1, j_1}$ in $\sign(M')$ are all zero and the entries to the left are all $(+)$ or $(\cdot)$. We continue in this way for all subsequent moves and results in matrix as (\ref{eqn: sign matrix of first kind}). Therefore $A$ admits a gluing of the first kind.
\end{proof}

To prove Theorem~\ref{thm: dist red ci implies first kind}, the first step is to apply Lemma~\ref{lem: w cannot reduce circuits on one side}, which shows that there exists a distinguished circuit $z$ of $A$ such that the only element of $M$ that reduces the distance of $z$ is $w$. In particular, since $w$ is applicable to this circuit, we deduce that many of the entries of $w$ are zero. We proceed to examine the circuits not distance reduced by $w$ to find further zeros in the coordinates of the Markov basis. We keep track of the zeros to show that $\sign(M)$ is winnable.

\begin{example}
    Let $A \in \ZZ^{1 \times 5}$ be a matrix. Suppose that $A$ is a complete intersection that admits the gluing 
    \[
        A = (((a_1 \circ a_2) \circ a_3) \circ (a_4 \circ a_5)).
    \]
    Without loss of generality we assume $a_1 < a_2$ and $a_4 < a_5$. Suppose that $M$ is a minimal Markov basis of the form
    \[
        M = \{u_2, u_3, u_4, u_5\} =  \begin{bmatrix}
            u_{2,1} & -u_{2,2} & 0 & 0 & 0 \\
            u_{3,1} & u_{3,2} & -u_{3,3} & 0 & 0 \\
            0 & 0 & 0 & u_{4,4} & - u_{4,5} \\
            u_{5,1} & u_{5,2} & u_{5,3} & -u_{5,4} & -u_{5,5}
        \end{bmatrix}
    \]
    where $u_{i,j} \ge 0$ for all $i$, $j$. By Lemma~\ref{lem: w cannot reduce circuits on one side}, we have that $u_5$ reduces the distance of one of the circuits $z_{2,5}$ or $z_{3,5}$ where $z_{i,j}$ is the circuit supported on $\{i, j\}$. By considering the applicability of $u_5$ to these circuits, we identify three cases for $u_5$: $\supp(u_5^-) = \{5\}$, $\supp(u_5^+) = \{2\}$, and $\supp(u_5^+) = \{3\}$.

    \medskip \noindent \textbf{Case 1.} $u_5 = (u_{5,1}, u_{5,2}, u_{5,3}, 0, -u_{5,5})$. In this case we have
    \[
        \sign(M) = \begin{bmatrix}
            + & - & \cdot & \cdot & \cdot \\
            \oplus & \oplus & - & \cdot & \cdot \\
            \cdot & \cdot & \cdot & + & - \\
            \oplus & \oplus & \oplus & \cdot & - 
        \end{bmatrix},
    \]
    which is winnable and so $A$ admits a gluing of the first kind. For instance, a winning sequence of moves is given by the indices: $(3, 4), (4, 5), (2, 3), (1, 2)$.

    \medskip \noindent \textbf{Case 2.} $u_5 =
    (0, u_{5,2}, 0, -u_{5,4}, -u_{5,5})$. In this case we have
    \[
        \sign(M) = \begin{bmatrix}
            + & - & \cdot & \cdot & \cdot \\
            \oplus & \oplus & - & \cdot & \cdot \\
            \cdot & \cdot & \cdot & + & - \\
            \cdot & + & \cdot & \ominus & \ominus 
        \end{bmatrix},
    \]
    which is winnable and so $A$ admits a gluing of the first kind. For instance, a winning sequence of moves is given by the indices: $(2, 3), (1, 1), (4, 2), (3, 4)$.

    \medskip \noindent \textbf{Case 3.} $u_5 =
    (0, 0, u_{5,3}, -u_{5,4}, -u_{5,5})$. In this case we have that $u_5$ reduces the distance of the circuit $z_{3,5} = (0, 0, z_3, 0, -z_5)$ and $u_5$ is applicable to the positive part of $z$. Note that if $u_5$ also reduces the distance of $z_{2,5}$ then by the above cases, we have that $A$ admits a gluing of the first kind. So, let us assume that $u_5$ does not reduce the distance of $z_{2,5}$. So we must have that $u_2$ reduces the distance of $z_{2,5}$. In particular, we have that $u_2$ is applicable to $z_{2,5}$ and so $u_2 = (0, u_{2,2}, -u_{2,3}, 0, 0)$.
    Therefore the sign of $M$ is given by
    \[
        \sign(M) = \begin{bmatrix}
            + & - & \cdot & \cdot & \cdot \\
            \cdot & + & - & \cdot & \cdot \\
            \cdot & \cdot & \cdot & + & - \\
            \cdot & \cdot & + & \ominus & \ominus 
        \end{bmatrix},
    \]
    which is winnable and so $A$ admits a gluing of the first kind. In this case, a winning sequence of moves is given by the indices: $(1,1), (2,2), (4,3), (3,4)$.
\end{example}

\subsection{Existence of distinguished circuits}\label{sec: distinguished circuit for w}

In this section we set up the notation for the proof of Theorem~\ref{thm: dist red ci implies first kind}. In Lemma~\ref{lem: w cannot reduce circuits on one side} we prove that key result that allows us to apply induction.

\medskip

\noindent \textbf{Setup.} Throughout this section and the next, we consider a pair of matrices $A = (a_1, \dots, a_n)$ and $B = (b_1, \dots, b_m)$ with $n, m \ge 1$ that are complete intersections. We assume there is a gluing $C = A \circ B$. Note that all complete intersection monomial curves arise in this way. We assume that $M$ is a Markov basis for $C$ that has the form
$M = \{u_1, \dots, u_{n-1}, v_1, \dots, v_{m-1}, w \}$ where:
\begin{itemize}
    \item for each $i \in [n-1]$ we have $\supp(u_i) \subseteq \{1,\dots,n\}$,
    \item the set of projections of $u_i$ onto the coordinates $\{1, \dots, n\}$ is a Markov basis for $A$,
    \item for each $j \in [m-1]$ we have $\supp(v_j) \subseteq \{n+1, \dots, n+m \}$,
    \item the set of projections of $v_j$ onto the coordinates $\{n+1, \dots, n+m \}$ is a Markov basis for $B$,
    \item $\supp(w^+) \subseteq \{1, \dots, n \}$ and $\supp(w^-) \subseteq \{n+1, \dots, n+m\}$.
\end{itemize}
We write $M_A = \{u_1, \dots, u_{n-1} \}$ and $M_B = \{v_1, \dots, v_{m-1} \}$.
For each pair of indices $i, j \in \{1, 2, \dots, n+m \}$ with $i < j$, we denote by $z_{i,j}$ the circuit supported on $i$ and $j$ given by
\[
    z_{i,j} = (0, \dots, 0, (z_{i,j})_i, 0, \dots, 0, -(z_{i,j})_j, 0, \dots, 0)
\]
for some positive integers $(z_{i,j})_i$ and $(z_{i,j})_j$. For ease of notation, we define $z_{j,i}$ to be equal to $z_{i,j}$.

Our proof of Theorem~\ref{thm: dist red ci implies first kind} is by induction on $n+m$. To apply the inductive hypothesis, we must show that either: the set $M_B \cup \{w\}$ does not reduce the distance of any circuit $z_{i,j}$ with $i, j \in [n]$, or the set $M_A \cup \{w\}$ does not reduce the distance of any circuit $z_{n+i, n+j}$ with $i,j \in [m]$. We may then apply induction to either $A$ with Markov basis given by a projection of $M_A$ or $B$ with Markov basis given by a projection $M_B$, respectively. In the following lemmas, we prove that one of these cases indeed holds. We now show the existence of distinguished circuits of $C$ that are distance reduced by a single element of $M$.

\begin{lemma}\label{lem: non reducible coordinate}
    Suppose $M$ reduces the distance of the circuits of $C$. Then at least one of the following holds:
    \begin{enumerate}
        \item There exists $r \in [n]$ such that for each $j \in [m]$ the circuit $z_{r, n+j}$ is distance reduced by exactly one element $u_j \in M$. Moreover, this unique element lies in the set $M_B \cup \{w\}$.
        In particular, there is a bijection $u_j \leftrightarrow z_{r, n+j}$ between the set $M_B \cup \{w\}$ and the circuits $\{z_{r, n+j} : j \in [m]\}$ where $u_j$ is the only element of $M$ to distance reduce the circuit $z_{r, n+j}$.

        \item There exists $c \in [m]$ such that for each $i \in [n]$ the circuit $z_{i, n+c}$ is distance reduced by exactly one element $u_i \in M$. Moreover, this unique element lies in the set $M_A \cup \{w\}$. In particular, there is a bijection $u_i \leftrightarrow z_{i, n+c}$ between the set $M_A \cup \{w\}$ and the circuits $\{z_{i, n+c} : i \in [n]\}$ where $u_i$ is the only element of $M$ to distance reduce the circuit $z_{i, n+c}$.
    \end{enumerate}
\end{lemma}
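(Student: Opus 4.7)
The plan is to proceed by case analysis on the support structure of the distinguished element $w \in M$. The starting point is a support-based applicability observation: for a cross circuit $z_{i, n+j}$ with $i \in [n]$ and $j \in [m]$, an element $u \in M$ is applicable only if one of $\supp(u^+), \supp(u^-)$ equals $\{i\}$ or $\{n+j\}$. Combined with the support restrictions from the setup, this produces a rigid row/column structure: $u_k \in M_A$ is applicable to $z_{i, n+j}$ only if $\supp(u_k^+) = \{i\}$ or $\supp(u_k^-) = \{i\}$ (a ``row-$i$ reducer''); $v_\ell \in M_B$ is applicable only if $\supp(v_\ell^+) = \{n+j\}$ or $\supp(v_\ell^-) = \{n+j\}$ (a ``column-$j$ reducer''); and $w$ is applicable exactly when $\supp(w^+) = \{i\}$ or $\supp(w^-) = \{n+j\}$.

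Step 1 is to show that at least one of $|\supp(w^+)| = 1$ or $|\supp(w^-)| = 1$ must hold. Suppose for contradiction that both supports have size at least two; then $w$ is inapplicable to every cross circuit. Using the gluing decomposition
\[
\ker(C) = \bigl(\ker(A) \times \{0\}\bigr) \oplus \bigl(\{0\} \times \ker(B)\bigr) \oplus \langle w \rangle_\ZZ,
\]
I would write each cross circuit as $z_{i, n+j} = \alpha w + u' + v'$ with $u' \in \ker(A), v' \in \ker(B)$, and analyze the $1$-norm reduction inequalities $(u_k^+)_i > \|u_k^-\|_1$ for $M_A$ elements and the analogous bounds for $M_B$. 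The goal is to exhibit a ``far corner'' cross circuit $z_{i^*, n+j^*}$ (indexed by extremal generators) for which no applicable element of $M_A \cup M_B$ satisfies the reduction inequality, contradicting the hypothesis that $M$ reduces the distance of the circuits of $C$.

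Step 2 then handles the case $\supp(w^+) = \{i_0\}$ (the case $\supp(w^-) = \{n+j_0\}$ being symmetric and yielding conclusion (2) of the lemma). Setting $r = i_0$, I would verify conclusion (1) in three sub-steps: (a) exclude every $u_k \in M_A$ as a reducer for any row-$i_0$ circuit, by arguing that the existence of such an element would let $w$ be expressed as a combination of $u_k$ and another generator, violating the minimality of $M$; (b) match each $v_\ell \in M_B$ with the unique circuit $z_{i_0, n+j_\ell}$ where $n + j_\ell$ is the singleton support coordinate of $v_\ell$, producing $m-1$ distinct pairings; (c) assign $w$ to the remaining circuit $z_{i_0, n+j^*}$, using the distance-reducing hypothesis combined with the exclusion from (a) to force $(w^+)_{i_0} > \|w^-\|_1$ and hence the reduction at exactly that circuit.

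The main obstacle I anticipate is Step~1: producing the contradicting cross circuit when both supports of $w$ are large. This requires a careful interplay between the gluing value $x = \sum (w^+)_i a_i = \sum (w^-)_j b_j$, the structure of the minimal elements of $\ker(A)$ and $\ker(B)$, and the $1$-norm reduction inequalities. I expect the extremal circuits indexed by the ``heaviest'' generators $a_{i^*}, b_{j^*}$ to be precisely the ones admitting no $M_A \cup M_B$ reducer, forcing the conclusion that $w$ must have singleton support on at least one side.
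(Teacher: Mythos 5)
Your opening applicability analysis is correct and matches the paper's starting point, but the proposal then diverges onto a route with real gaps, and it misses the one idea that makes the lemma easy: a pigeonhole count. The paper observes that a single element $u\in M_A$ can reduce cross circuits $z_{i,n+j}$ in at most one row $i$: applicability forces $\supp(u^+)=\{i\}$ (up to sign) and the $1$-norm reduction forces the strict inequality $(u)_i>\|u^-\|_1$; reducing a circuit in a second row $i'$ would force $\supp(u^-)=\{i'\}$ and $(u)_{i'}>\|u^+\|_1$, giving $\|u^+\|_1>\|u^-\|_1>\|u^+\|_1$. Since $|M_A|=n-1<n$, some row $r$ contains no $M_A$-reducer; dually some column $c$ contains no $M_B$-reducer; hence $w$ alone must reduce $z_{r,n+c}$, and applicability to that circuit immediately yields the singleton-support dichotomy you flag as the ``main obstacle'' of your Step~1. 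Your proposed substitute — locating a ``far corner'' circuit via the heaviest generators $a_{i^*},b_{j^*}$ — is not a proof: there is no reason the extremal generators index the row and column that lack reducers, and you do not carry out the kernel-decomposition analysis you sketch.

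Step~2 also contains incorrect deductions. In (a), knowing $\supp(w^+)=\{i_0\}$ does not exclude elements of $M_A$ from reducing circuits $z_{i_0,n+j}$: an element $u_k\in M_A$ with $\supp(u_k^+)=\{i_0\}$ and $(u_k)_{i_0}>\|u_k^-\|_1$ does exactly that, and minimality of $M$ is not violated by two moves reducing the same circuit. The correct choice of $r$ is the pigeonhole row (which coincides with the support coordinate of $w$ only a posteriori, via $w$'s applicability to $z_{r,n+c}$). In (b), you assume every $v_\ell\in M_B$ has singleton support on one side and reduces the matching circuit; neither is given. The ``exactly one'' and the bijection instead follow from counting: the $m$ circuits in row $r$ are each reduced by at least one of the $m$ elements of $M_B\cup\{w\}$, each element of $M_B$ reduces at most one of them (the dual of the row claim), and one must additionally check — as the paper does by a separate support argument — that $w$'s reductions among cross circuits lie in a single row or column, so that $w$ too accounts for at most one circuit in the relevant line. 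As written, the proposal does not establish the lemma.
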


\begin{proof}
    Let $T$ be a rectangular grid with $n$ rows and $m$ columns. We fill the $(i,j)$ entry of $T$ with the set
    \[
        T_{i,j} = \{x \in M : x \text{ reduces the distance of } z_{i, n+j}\}.
    \]
    
    \medskip \noindent \textbf{Claim.} Each element $u \in M_A$ appears in at most one row of $T$.

    \begin{proof}
        Suppose that an element $u \in M_A$ reduces the distance of some circuit $z_{i,n+j}$ for some $i \in [n]$ and $j \in [m]$. Since $\supp(u) \subseteq [n]$ and $u$ is applicable to the circuit $z_{i,n+j}$, it follows that $\supp(u^+) = \{i\}$ or $\supp(u^-) = \{i\}$. Since we may freely replace $u$ with $-u$ in $M$, we may assume without loss of generality that $\supp(u^+) = \{i\}$. Since $u$ reduces the distance of $z_{i,n+j}$, we have
        $|z_{i,n+j}| > |z_{i,n+j} - u|$ so 
        \[
            (z_{i,n+j})_i + (z_{i,n+j})_j = |z_{i,n+j}| > |z_{i,n+j} - u| = 
            (z_{i,n+j})_i - (u)_i + |u^-| + (z_{i,n+j})_j.
        \]
        Hence $(u)_i > |u^-|$. Assume by contradiction that $u$ reduces the distance of another circuit $z_{i', n+j'}$ with $i' \in [n]$, $i' \neq i$, and $j' \in [m]$. By a similar argument to the above, it follows that $\supp(u^-) = \{i'\}$ and $(u)_{i'} > |u^+|$. However, we have $|u^+| = (u)_i > |u^-| = (u)_{i'} > |u^+|$, which is a contradiction. This finishes the proof of the claim.
    \end{proof}

    Since $|M_A| = n-1$, it follows that there is a row of $T$ that does not contain any element of $M_A$. Let $r \in [n]$ denote the index of this row, i.e., for each $j \in [m]$ we have $T_{r, j} \cap M_A = \emptyset$.

    The above claim is symmetrical by switching $M_A$ with $M_B$ and switching rows of $T$ with columns of $T$. So, each element $v \in M_B$ appears in at most one column of $T$. Since $|M_B| = m-1$, it follows that there is a column of $T$ that does not contain any element of $M_B$. Let $c \in [m]$ denote the index of this column.

    Since $M$ reduces the distance of the circuits of $C$, there is an element of $M$ that reduces the distance of the circuit $z_{r,n+c}$. However, by the above, all elements of $M_A \cup M_B$ do not reduce the distance of $z_{r, n+c}$, so it follows  that $w$ is the unique element of $M$ that reduces the distance of $z_{r, n+c}$.

    To finish the proof of the result, we show that the entries of $T$ containing $w$ are contained in a single row or column. Assume not. Then $w$ reduces the distance of a pair of circuits $z_{i, n+j}$ and $z_{i', n+j'}$ with $i, i' \in [n]$, $j, j' \in [m]$, $i \neq i'$, and $j \neq j'$. Since $w$ is applicable to the circuits $z_{i, n+j}$ and $z_{i', n+j'}$, it straightforward to show that $w$ is a circuit and $\supp(w) \subseteq \{i, i', j, j'\}$. Moreover there are two cases: either $w$ is supported on $i$ and $n+j'$, or $w$ is supported on $i'$ and $n+j$. These cases are symmetrical so we may assume that $w$ is supported on $i$ and $n+j'$, i.e., $w = (0, \dots, 0, w_i, 0, \dots, 0, -w_{n+j'}, 0, \dots, 0)$. Since $w$ reduces the distance of $z_{i,n+j}$ it follows that $w_i > w_{n+j'}$. Since $w$ reduces the distance of $z_{i', n+j'}$ it follows that $w_{n+j'} > w_i > w_{n+j'}$, which is a contradiction. This concludes the proof.
\end{proof}

\begin{lemma}\label{lem: w cannot reduce circuits on one side}
    If $M$ reduces the distance of the circuits of $C$, then one of the following holds:
    \begin{enumerate}
        \item[(a)] $|\supp(w^-)| = 1$, condition (1) from Lemma~\ref{lem: non reducible coordinate}, and for each $i, j \in [n]$ the move $w$ does not reduce the distance of the circuit $z_{i,j}$,

        \item[(b)] $|\supp(w^+)| = 1$, condition (2) from Lemma~\ref{lem: non reducible coordinate}, and for each $i, j \in [m]$ the move $w$ does not reduce the distance of the circuit $z_{n+i, n+j}$.
    \end{enumerate}
\end{lemma}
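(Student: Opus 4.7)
The plan is to start from Lemma~\ref{lem: non reducible coordinate}, which supplies us with either condition (1) or (2) (or possibly both). By the symmetry between the roles of $A$ and $B$ in the gluing $C = A \circ B$ (equivalently, swapping $w$ with $-w$ together with a relabelling of coordinates), it suffices to deduce (a) whenever condition (1) holds. So I would fix the row $r \in [n]$ supplied by condition (1), and let $c \in [m]$ be the unique index with $w$ the sole element of $M$ reducing $z_{r, n+c}$.

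The first step is to observe that applicability of $w$ to $z_{r, n+c}$ forces $\supp(w^+) = \{r\}$ or $\supp(w^-) = \{n+c\}$, since $\supp(w^+) \subseteq [n]$ and $\supp(w^-) \subseteq \{n+1, \dots, n+m\}$ are disjoint from the opposite halves of the supports of $z_{r, n+c}^{\pm}$. Next, I would eliminate the sub-case in which only $\supp(w^+) = \{r\}$ holds and $|\supp(w^-)| \ge 2$: assuming this, I would compute $||z_{r, n+j} - w||$ explicitly for each $j$ with $n+j \in \supp(w^-)$. The non-reducibility of $z_{r, n+j}$ for $j \neq c$ imposed by the bijection in condition (1) then gives a sufficient system of sign-and-size inequalities to force that condition (2) of Lemma~\ref{lem: non reducible coordinate} also holds with a matching column and single positive support, yielding conclusion (b) instead.

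In the remaining case $\supp(w^-) = \{n+c\}$, conclusion (a.i) is immediate. For (a.iii), I would argue by contradiction: suppose $w$ reduces some $z_{i,j}$ with $i, j \in [n]$. Since $\supp(w^-) = \{n+c\}$ is disjoint from $\{i, j\}$, applicability forces $\supp(w^+) \subseteq \{i\}$ or $\subseteq \{j\}$, so $|\supp(w^+)| = 1$ and $w$ is itself a circuit supported on $\{i', n+c\}$ for some $i' \in \{i, j\}$. An explicit computation of $||z_{i,j} - w||$ produces a strict inequality between $(w)_{i'}$ and $(w)_{n+c}$, while the fact that $w$ reduces $z_{r, n+c}$ (and, by the bijection, no other $z_{r, n+j}$) produces a compatible inequality in the opposite direction. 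Taking cases on whether $i' = r$ or $i' \neq r$ and combining the resulting inequalities leads in each case to a contradiction — either directly, or by producing a second circuit in row $r$ that $w$ would also reduce, violating the bijection.

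The main obstacle I anticipate is the boundary case where $w$ is itself a pure circuit (both $\supp(w^+)$ and $\supp(w^-)$ are singletons), since then conditions (1) and (2) of Lemma~\ref{lem: non reducible coordinate} may hold simultaneously and both (a) and (b) become candidate conclusions; one must carefully verify that at least one remains consistent with all the inequalities governing when $w$ reduces a given circuit. A secondary technical point is justifying the various applicability conditions, such as $(z_{r,n+c})_r \ge (w)_r$ or $(z_{r,n+c})_{n+c} \ge (w)_{n+c}$: these can be handled by combining $C w = 0$ with the positivity of the entries of $C$ and the fact that $z_{r, n+c}$ is chosen so as to satisfy $(z_{r, n+c})_r a_r = (z_{r, n+c})_{n+c} a_{n+c}$.
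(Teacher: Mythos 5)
Your support analysis of $w$ (forced by its applicability to $z_{r,n+c}$) matches the paper's, and your treatment of the case $\supp(w^-) = \{n+c\}$ --- deducing clause (a.iii) by contradiction from inequalities comparing $|w^+|$ and $|w^-|$ --- is essentially the paper's argument. But there is a genuine gap in the mixed-support case. Your opening reduction ``it suffices to deduce (a) whenever condition (1) holds'' is not valid: when $\supp(w^+)=\{r\}$ but $|\supp(w^-)|\ge 2$, condition (1) may hold while (a) fails outright (its first clause $|\supp(w^-)|=1$ is violated), so you must prove (b), whose second clause is condition (2) --- a condition you are not handed by Lemma~\ref{lem: non reducible coordinate}. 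You do notice this cross-over, but your proposed derivation of condition (2) does not work: you want to extract it from sign-and-size inequalities obtained by computing $||z_{r,n+j}-w||$ for $j$ with $n+j\in\supp(w^-)$. Condition (2) is a statement about the entire column of circuits $\{z_{i,n+c} : i\in[n]\}$ and about which elements of $M_A\cup\{w\}$ reduce them, bijectively; inequalities recording how $w$ acts on the row-$r$ circuits say nothing about how the elements of $M_A$ act on the column-$c$ circuits, so they cannot force condition (2). The working argument is structural rather than metric: re-enter the proof of Lemma~\ref{lem: non reducible coordinate} --- the distinguished column $c$ contains no reductions by $M_B$, each element of $M_A$ reduces circuits in at most one row, so by counting, condition (2) can only fail if $w$ reduces two circuits in column $c$; and when $|\supp(w^-)|\ge 2$ the move $w$ is not even applicable to $z_{i,n+c}$ for $i\neq r$. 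The symmetric argument disposes of $\supp(w^-)=\{n+c\}$ with $|\supp(w^+)|\ge 2$.

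Your ``boundary case'' worry, where $w$ is itself a circuit supported on $\{r,n+c\}$, is legitimate but you leave it unresolved; it is settled by a single comparison of $w_r$ with $w_{n+c}$ (equivalently of $a_r$ with $b_c$, since $w_ra_r=w_{n+c}b_c$). If $w_r>w_{n+c}$ then $w$ reduces every circuit through coordinate $r$ to which it is applicable, which destroys the bijection required by condition (1) and at the same time shows $w$ reduces no circuit inside the $B$-block, so one lands in (b); the opposite inequality gives (a), and equality gives both. In other words, which of (a)/(b) you prove should be dictated by the sign of $w_r-w_{n+c}$ and the support of $w$, not by whichever of conditions (1)/(2) Lemma~\ref{lem: non reducible coordinate} happened to supply. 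Finally, note that the applicability issues you defer (e.g.\ whether $w_r\le (z_{r,j})_r$ for the circuits $z_{r,j}$ with $j\in[n]$) are not settled by the relation $(z_{r,n+c})_ra_r=(z_{r,n+c})_{n+c}a_{n+c}$ alone, since $w$ need not be a minimal circuit; but when $w$ fails to be applicable it trivially fails to reduce, which is the direction these clauses need.
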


\begin{proof}
    By Lemma~\ref{lem: non reducible coordinate} we have that $w$ is the unique element of $M$ that reduces the circuit $z_{r, n+c}$ for some $r \in [n]$ and $c \in [m]$. In particular, $w$ is applicable to $z_{r,n+c}$ so either $\supp(w^+) = \{r\}$ or $\supp(w^-) = \{n+c\}$. We take cases based on the support of $w$.

    \medskip \noindent \textbf{Case 1.} Let $\supp(w^+) = \{r\}$ and $\supp(w^-) = \{c\}$. We take further cases on $a_r$ and  $b_c$.

    If $a_r = b_c$, then the only circuit $z_{i,n+j}$ with $i \in [n]$ and $j \in [m]$ that is reduced by $w$ is exactly $z_{r, n+c}$. Therefore both conditions (1) and (2) hold in Lemma~\ref{lem: non reducible coordinate}. In this case we observe that $w$ does not reduce the distance of any other circuit of $C$. Hence both conditions (a) and (b) hold.

    If $a_r > b_c$, then $w$ reduces the distance of all circuits $z_{r, n+j}$ with $j \in [m]$ so condition (1) of Lemma~\ref{lem: non reducible coordinate} does not hold, hence condition (2) holds. Moreover, the move $w$ does not reduce the distance of any circuit $z_{n+i,n+j}$ with $i, j \in [m]$, hence (b) holds.

    If $a_r < b_c$, then $w$ reduces the distance of all circuits $z_{i, n+c}$ with $i \in [n]$ so condition (2) of Lemma~\ref{lem: non reducible coordinate} does not hold, hence condition (1) holds. Since $w$ does not reduce the distance of any circuit $z_{i,j}$ with $i, j \in [n]$, we have that (a) holds. 

    \medskip \noindent \textbf{Case 2.} Let $\supp(w^+) = \{r\}$ and $\supp(w^-) \supsetneq \{c\}$. In this case $w$ is not applicable to any circuit $z_{n+i,n+j}$ with $i,j \in [m]$. In particular, the move $w$ does not reduce the distance of these circuits. To show that (b) holds, it suffices to prove that condition (2) from Lemma~\ref{lem: non reducible coordinate} holds. If not then $w$ reduces the distance of a circuit $z_{i, n+c}$ for some $i \in [n]$ with $i \neq r$. However this is an immediate contradiction because $w$ is not applicable to $z_{i, n+c}$ since $i \neq r$ and $|\supp(w^-)| \ge 2$. So we have shown (b) holds.

    \medskip \noindent \textbf{Case 3.} Let $\supp(w^-) = \{c\}$ and $\supp(w^+) \supsetneq \{r\}$. In this case $w$ is not applicable to any circuit $z_{i,j}$ with $i,j \in [n]$. In particular, the move $w$ does not reduce the distance of these circuits. To show that (a) holds, it suffices to prove that condition (1) from Lemma~\ref{lem: non reducible coordinate} holds. If not then $w$ reduces the distance of a circuit $z_{r, n+j}$ for some $j \in [m]$ with $j \neq c$. However, this is an immediate contradiction because $w$ is not applicable to $z_{r, n+j}$ since $j \neq c$ and $|\supp(w^+)| \ge 2$. So we have shown (a) holds.
    
    \medskip \noindent
    In each case we have shown that either (a) or (b) holds. This concludes the proof.
\end{proof}

\subsection{Gluing trees} \label{sec: decorated gluing trees}
Recall that complete intersection monomial curves are completely glued. This means any such curve $C = A \circ B \in \NN^{1 \times (n+m)}$ is the gluing of a complete intersection monomial curves $A \in \NN^{1 \times 1}$ and $B \in \NN^{1 \times m}$. Similarly, both $A$ and $B$ admit gluings and so on until each constituent matrix has size one. We record the data of the sequence of gluing with a rooted binary tree $\MT_C$. The leaves of $\MT_C$ are labelled with the entries of $C$, which we identify with the set $[n+m]$. The graph structure is defined inductively. If $n = 1$ and $A = (a_1)$, then $\MT_A$ is the graph with a single vertex $a_1$, called the root of $\MT_A$, and no edges. Suppose $\MT_A$ and $\MT_B$ are the rooted binary trees associated to $A$ and $B$ with roots $u$ and $v$ respectively. Then $\MT_C$ is defined to be the graph obtained from the disjoint union $\MT_A \sqcup \MT_B$ by adding a vertex $w$ adjacent to $u$ and $v$. The vertex $w$ is defined to be the root of $\MT_C$. We call $\MT_C$ the \textit{gluing tree} of $C$. 

\medskip
\noindent \textbf{Gluing tree notation.} There is a canonical embedding of $\MT_C$ in $\RR^2$. The embedding is defined so that: the vertex $i \in [n+m]$ lies at position $(i,0) \in \RR^2$; each edge is a line segment with gradient $1$ or $-1$; and each parent has a higher $y$-coordinate than its children. For example, see the gluing tree in Figure~\ref{fig: gluing tree example}. Thus, given a non-leaf vertex $v$ of $\MT_C$, there are two edges incident to $v$ below it: one to the \textit{left} and one to the \textit{right}. The set of leaves of $\MT_C$ connected to $v$ by a path whose first edge is the left one is denoted $L(v)$. Similarly, the set of leaves of $\MT_C$ connected to $v$ be a path whose first edge is the right one is denoted $R(v)$. We call the set of vertices $L(v) \cup R(v)$ the leaves of $\MT_C$ \textit{below} $v$.

Given a Markov basis $M$ for $C$, there is a natural bijection between the elements of $M$ and non-leaf vertices of $\MT_C$. This bijection associates an element $u \in M$ with a non-leaf vertex $v$ so that one of the following holds:
\[
\left(\supp(u^+) \subseteq L(v) \text{ and } \supp(u^-) \subseteq R(v) \right)
\text{ or }
\left( \supp(u^-) \subseteq L(v) \text{ and } \supp(u^+) \subseteq R(v) \right).
\]

\begin{example}\label{example: gluing tree in A5}
    Let $A = \begin{pmatrix}
        a_1 & a_2 & a_3
    \end{pmatrix}$ and $B = \begin{pmatrix}
        b_1 & b_2
    \end{pmatrix}$. Suppose there is a gluing $C = A \circ B$. Then the gluing tree $\MT_C$ is shown in Figure~\ref{fig: gluing tree example}. Given a Markov basis $M$, the element $w$ is the root of $\MT_C$ and corresponds to the final row in the sign matrix in the diagram. The non-leaf vertices of the gluing tree are labelled with the corresponding elements of the Markov basis. 

    \begin{figure}[t]
        \centering
        \includegraphics{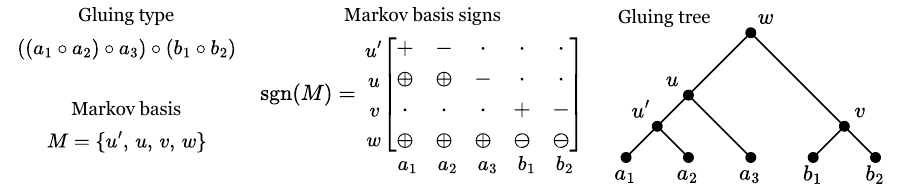}
        \caption{
        The gluing tree in $\mathbb A^5$ from Example~\ref{example: gluing tree in A5}}
        \label{fig: gluing tree example}
    \end{figure}
\end{example}

\noindent \textbf{Decorated partial gluing trees.}
Let $M$ be a Markov basis for $C$ and suppose that $M$ reduces the distance of the circuits of $C$. By swapping $A$ and $B$ in the gluing of $C$, we may assume that condition (b) of Lemma~\ref{lem: w cannot reduce circuits on one side} and \ref{lem: non reducible coordinate} holds. The \textit{partial gluing tree} $\MT_C(M)$ is the induced subgraph of $\MT_C$ obtained by deleting the leaves $b_1, \dots, b_m$ and vertices $v$ that correspond to elements of $M_B$. Explicitly, these are the non-leaf vertices $v$ of $\MT_C$ such that $L(v) \cup R(v) \subseteq \{b_1, \dots, b_m\}$. So, the partial gluing tree $\MT_C(M)$ has $n$ leaves and $n$ non-leaves. Its leaves are labelled with the elements $[n]$ corresponding to $A$ and its non-leaves are labelled with the elements of $M_A \cup \{w\}$.

By condition (b) of Lemma~\ref{lem: non reducible coordinate}, there exists $c \in [m]$ and a bijection between $[n]$ and $M_A \cup \{w\}$ given by $i \mapsto u_i$, with the property that $u_i$ is the only element of $M$ that reduces the distance of the circuit $z_{i, n+c}$. Fix $u \in M_A \cup \{w\}$ and let $i \in [n]$ be the corresponding element in $[n]$ as above. We define the \textit{decoration $D_u$} to be the following pair of subgraphs $D_u = (s_u, d_u)$ of $\MT_C(M)$. The subgraph $s_u$ is the unique path from the non-leaf vertex $u$ to the leaf $i$. Let $\ell$ be edge incident to $u$ along this path. The subgraph $d_u$ is the induced subgraph of $\MT_C(M)$ consisting of all vertices below $u$ whose path to $u$ does not involve $\ell$. Note that $d_w$ is the empty graph.

We define the \textit{decorated partial gluing tree} $\overline{\MT_C(M)}$ to be the pair $(\MT_C(M), \MD)$, where $\MD = \{D_u : u \in M_A \cup \{w\} \}$ is the set of decorations. We note that the operation of taking a graph minor of $\MT_C(M)$ naturally extends to any subgraph, hence taking graph minors naturally extends to an operation on $\overline{\MT_C(M)}$.

\begin{example}\label{example: decorated partial gluing tree in A5}
    Consider the gluing tree in Example~\ref{example: gluing tree in A5}. Suppose that condition (b) holds for some Markov basis $M$ of $C$. Let us also assume that the bijection between $\{1,2,3\}$ and $M_A \cup \{w\} = \{u', u, w\}$ is given by: $w \leftrightarrow 2$, $u \leftrightarrow 3$, and $u' \leftrightarrow 1$. The decorated partial gluing tree $\overline{\MT_C(M)}$ is shown in Figure~\ref{fig: decorated gluing tree}.

    \begin{figure}[t]
        \centering
        \includegraphics{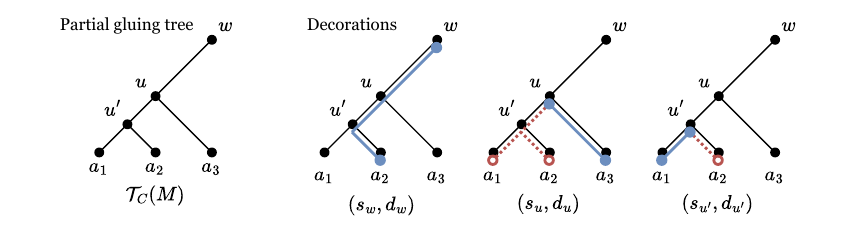}
        \caption{The decorated partial gluing tree in Example~\ref{example: decorated partial gluing tree in A5}. In the figures of decorated trees, the solid line shows the path $s_v$ and the dotted lines indicate $d_v$, for each non-leaf $v$.}
        \label{fig: decorated gluing tree}
    \end{figure}

    Each decoration encodes information about the sign patterns in the Markov basis. For example, the path $s_w$ indicates that $\supp(w^+) = \{2\}$ and $\supp(w^-) \subseteq \{n+1, \dots, n+m\}$, which follows from Lemma~\ref{lem: w cannot reduce circuits on one side}. Similarly, the subgraph $d_u$ encodes the fact that $\supp(u^-) \subseteq \{1, 2\}$.
\end{example}

\begin{definition}
    An \textit{abstract decorated partial gluing tree} (or \textit{abstract d-tree}) is the combinatorial data of a decorated partial gluing tree. Explicitly, an abstract d-tree is a pair $(\MT, \MD)$ where $\MT$ is rooted tree such that: the root has degree $1$ and every non-root non-leaf vertex has degree $3$. Following the notation for gluing trees, we assume that $\MT$ is embedded in $\RR^2$. We write $V(\MT)$ for the set of non-leaf vertices of $\MT$, which includes the root, and $\ell(\MT)$ for the set of leaves of $\MT$. The set $\MD$ is the set of decorations $D_u$, with one for each $u \in V(\MT)$. A decoration $D_u = (s_u, d_u)$ is a pair of subgraphs $s_u$ and $d_u$. The subgraph $s_u$ is a path in $\MT$ connecting $u$ to a leaf $\ell_u$. Let $\ell$ be the first edge of this path. The subgraph $d_u$ is the induced subgraph of $\MT$ consisting of all vertices $v$ below $u$ such that the path from $v$ to $u$ avoids $\ell$. In addition, we require that the set $\{\ell_u : u \in V(\MT)\} = \ell(\MT)$.
\end{definition}

\begin{example}
    Figure~\ref{fig: decoration of abstract d-tree} shows the decoration of an abstract d-tree.

    \begin{figure}[t]
        \centering
        \includegraphics{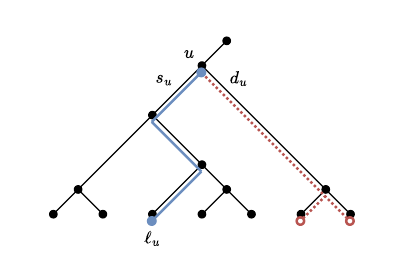}
        \caption{A decoration of an abstract d-tree}
        \label{fig: decoration of abstract d-tree}
    \end{figure}
\end{example}

\begin{lemma}\label{lem: abstract d-tree special leaf}
    Let $(\MT, \MD)$ be an abstract d-tree. Then there exists a non-leaf vertex $u \in V(\MT)$ such that $\ell_u$ and $u$ are adjacent and $\ell_u$ is not contained in the subgraph $d_v$ for each $v \in V(\MT)$.
\end{lemma}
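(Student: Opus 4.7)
The plan is to construct a canonical path from the root to a leaf by following the local ``chosen direction'' supplied by the decorations at every step, and then take $u$ to be the penultimate vertex of this path.

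For each non-leaf $v \in V(\MT)$, I would let $c(v)$ denote the first edge of $s_v$: for the root this is the unique incident edge, and for every other non-leaf it is one of the two downward edges (since $s_v$ ends at a leaf strictly below $v$). I would then define the \emph{chosen path} inductively, starting at the root $r$ and, at each non-leaf vertex $v$ reached so far, descending along $c(v)$. Because $\MT$ is a finite rooted tree this procedure terminates at some leaf $\ell^{*}$, and I would let $u^{*}$ be the vertex immediately preceding $\ell^{*}$ on this path. By construction $c(u^{*})$ is the edge joining $u^{*}$ to $\ell^{*}$, and $u^{*}$ is a non-leaf.

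To verify the required properties, I would first observe that $s_{u^{*}}$ starts with $c(u^{*})$, which already lands on the leaf $\ell^{*}$, so $s_{u^{*}}$ has length one and $\ell_{u^{*}} = \ell^{*}$ is adjacent to $u^{*}$. For the second property, I would show that $\ell^{*} \notin d_v$ for every $v \in V(\MT)$ by a quick case split: if $v$ is not an ancestor of $\ell^{*}$ then $d_v$ consists of vertices strictly below $v$ and cannot contain $\ell^{*}$; if $v$ is an ancestor of $\ell^{*}$ (including $v=u^{*}$), then the chosen path passes from $v$ through $c(v)$, so $\ell^{*}$ lies in the subtree of $v$ reached through $c(v)$, which is precisely the side of $v$ excluded from $d_v$.

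The argument should be essentially immediate once the chosen path is in place; I do not expect any genuine obstacle. The only subtlety worth flagging is that the chosen direction $c(v)$ is a local datum that varies from vertex to vertex, so the path must be built recursively, tracking the decoration at each newly visited vertex, rather than following any global rule independent of $\MD$.
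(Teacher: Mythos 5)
Your proof is correct, but it takes a genuinely different route from the paper's. The paper argues by induction on the number of leaves: it locates a non-leaf $v$ both of whose children are leaves, and if $\ell_v$ fails the required property it prunes $v$ and $\ell_v$ to obtain a smaller abstract d-tree and invokes the inductive hypothesis. You instead give a direct construction: starting at the root, repeatedly descend along the edge $c(v)$ singled out by the decoration $s_v$ at the current vertex. The leaf $\ell^{*}$ where this greedy descent terminates is automatically excluded from every $d_v$ --- for an ancestor $v$ of $\ell^{*}$ because $\ell^{*}$ lies on the side of $v$ reached through $c(v)$, which is exactly the side removed in forming $d_v$, and for a non-ancestor because $d_v$ only contains vertices below $v$ --- and the last internal vertex $u^{*}$ of the descent satisfies $\ell_{u^{*}}=\ell^{*}$, since $s_{u^{*}}$ begins with an edge that already lands on a leaf. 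Your argument buys a shorter, self-contained proof that avoids defining and verifying the pruning operation on decorated trees; the paper's inductive set-up is, however, reused immediately in Lemma~\ref{lem: sequence of special vertices}, so the pruning machinery is not wasted there. One small point common to both proofs: each tacitly uses that $\ell_v$ lies below $v$ (so that the first edge of $s_v$ is a downward edge); this is true for the decorated partial gluing trees to which the lemma is applied, but it is worth making explicit when working from the abstract definition.
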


\begin{proof}
    We prove the result for all abstract d-trees $(\MT, \MD)$ by induction on $n$ the number of leaves. If $n = 1$, then the graph $\MT$ is the graph with a root $w$ and a single leaf $\ell$. The path $s_w$ is the edge that connects $w$ and $\ell$ and $d_w$ is the empty graph. Thus, the result holds.

    Suppose that $n \ge 2$. Then there exists a non-leaf vertex $v$ that is adjacent to two leaves: $\ell_v$ and $\ell$. If $\ell_v$ is not contained in $d_u$ for all $u$ then we are done. Otherwise assume that $\ell_v$ is contained in $d_u$ for some $u \in V(\MT)$. We construct a new abstract d-tree by \textit{pruning} the vertex $v$ and the leaf $\ell_v$, defined by the following diagram.

    \begin{center}
    \includegraphics[]{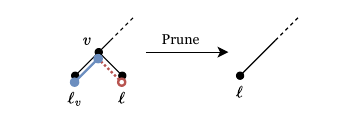}
    \end{center}

    This operation produces an abstract d-tree with one fewer leaf. By induction, this abstract d-tree has a vertex $v'$ that is adjacent to leaf $\ell'$ such that $\ell'$ does not belonging to $d_u$  for all $u$. Observe that $v'$ and $\ell'$ are adjacent in the original abstract d-tree and $\ell'$ does not belong to $d_u$ for all $u$ in original abstract d-tree. So we are done.
\end{proof}

\begin{lemma}\label{lem: sequence of special vertices}
    Let $(\MT, \MD)$ be an abstract d-tree with $n$ leaves. There exists an ordering $(u_1, u_2, \dots, u_n)$ of $V(\MT)$ such that, for each $i \in [n]$, the leaf $\ell_{u_i}$ is not contained in $d_{u_j}$ for all $j \ge i$.
\end{lemma}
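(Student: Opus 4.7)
The plan is to prove the statement by induction on the number of leaves $n$, using Lemma~\ref{lem: abstract d-tree special leaf} to peel off one vertex at a time and then applying the inductive hypothesis to a pruned d-tree.

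For the base case $n = 1$, the tree has a single non-leaf vertex (the root $w$) with $d_w$ empty, so the ordering $(w)$ trivially satisfies the condition. For the inductive step with $n \geq 2$, I would first apply Lemma~\ref{lem: abstract d-tree special leaf} to obtain a vertex $u_1 \in V(\MT)$ such that $\ell_{u_1}$ is adjacent to $u_1$ and $\ell_{u_1} \notin d_v$ for every $v \in V(\MT)$, and declare $u_1$ to be the first term of the ordering. A useful preliminary observation is that $u_1$ itself cannot lie in any $d_v$ either: if $u_1$ were a vertex of $d_v$, then since $\ell_{u_1}$ is adjacent to $u_1$ and lies below $v$, the path from $\ell_{u_1}$ to $v$ also avoids the first edge of $s_v$, so $\ell_{u_1}$ would belong to $d_v$, contradicting the choice of $u_1$. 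I would then prune $u_1$ and $\ell_{u_1}$ exactly as in the proof of Lemma~\ref{lem: abstract d-tree special leaf}, producing an abstract d-tree $(\MT', \MD')$ with $n-1$ leaves whose decorations are inherited by restriction from $\MD$.

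By the inductive hypothesis, there exists an ordering $(u_2, \dots, u_n)$ of $V(\MT')$ satisfying $\ell_{u_i} \notin d'_{u_j}$ for all $2 \leq i \leq j \leq n$. I claim the concatenated ordering $(u_1, u_2, \dots, u_n)$ meets the requirement in the original tree. The case $i = 1$ is immediate from the choice of $u_1$. For $i \geq 2$, the essential point is that $d'_{u_j}$ and $d_{u_j}$ contain exactly the same leaves: pruning only removes the vertex $u_1$ (which is not in any $d_v$ by the preliminary observation) and the leaf $\ell_{u_1}$ (which is not in any $d_v$ by the choice of $u_1$). Consequently the inductive statement $\ell_{u_i} \notin d'_{u_j}$ lifts to $\ell_{u_i} \notin d_{u_j}$ in $\MT$.

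The main technical step, though essentially bookkeeping, is verifying that the leaves of the decorations do not shift under pruning. This requires a brief case analysis on whether $u_1$ originally lay on the path $s_v$ for some $v$: if so, pruning shortcuts the path past $u_1$ but preserves its endpoint $\ell_v$, while no leaves are added to or removed from any $d_v$ by the observations above. This is exactly why Lemma~\ref{lem: abstract d-tree special leaf} was strengthened with the clause that $\ell_{u_1}$ avoids every decoration $d_v$—without this property, leaves could migrate between decorations during pruning and the induction would break down.
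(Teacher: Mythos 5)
Your proposal is correct and follows essentially the same route as the paper: induct on $n$, use Lemma~\ref{lem: abstract d-tree special leaf} to select $u_1$, prune, and apply the inductive hypothesis to obtain $(u_2,\dots,u_n)$. The extra bookkeeping you supply (that $u_1$ itself lies in no $d_v$, and hence that pruning does not change the leaf sets of the remaining decorations) is exactly the verification the paper leaves implicit, so it strengthens rather than alters the argument.
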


\begin{proof}
    We proceed by induction on $n$. For $n = 1$ the result is straighforward. Assume $n \ge 2$. By Lemma~\ref{lem: abstract d-tree special leaf}, there is a leaf $\ell_1$ that does not belong to $d_u$ for all $u \in V(\MT)$. In particular the non-leaf vertex $u_1 \in V(\MT)$ such that $\ell_{u_1} = \ell_1$ is adjacent to $\ell_1$ so we may \textit{prune} $(\MT, \MD)$ as follows.

    \begin{center}
        \includegraphics[]{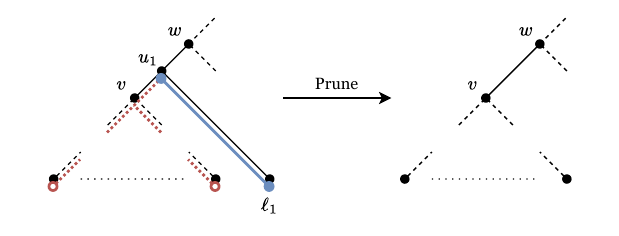}
    \end{center}

    The resulting abstract d-tree has one fewer leaf so by induction there exists a sequence $(u_2, \dots, u_n)$ such that $\ell_{u_i}$ is not contained in $d_{u_j}$ for all $j \ge i$. So the sequence $(u_1, u_2, \dots, u_n)$ satisfies the conditions of the result and we are done.
\end{proof}

We now use this description together with the decorated partial gluing tree $\overline{\MT_C(M)}$ to prove our main result.

\begin{proof}[Proof of Theorem~\ref{thm: dist red ci implies first kind}]
    We prove that if $M$ is a distance reducing Markov basis for the matrix $C = A \circ B$, then $C$ admits a gluing of the first kind. The proof is by induction on $m+n$. If $m+n = 2$, then the result is clear. 
    
    Assume that $m+n \ge 3$.
    First we apply Lemma~\ref{lem: w cannot reduce circuits on one side} where we may assume that (b) holds, since if (a) holds, then we may swap $A$ and $B$. In particular, $w$ does not reduce the distance of the circuits supported on $\{n+1, \dots, n+m \}$. Also observe that none of the moves in $M_A$ reduce the distance of these circuits as they are not applicable. So, we may apply the induction hypothesis to the matrix $B$ and Markov basis for $B$ obtained by projecting $M_B$ to the last $m$ coordinates. So the matrix $\sign(M_B)$ is winnable, so $\sign(M_B)$ has a winning sequence that we denote by $s_B$.

    We now extend $s_B$ to a winning sequence for $M$.
    Consider the decorated partial gluing tree $\overline{\MT_C(M)}$. By Lemma~\ref{lem: sequence of special vertices}, there is a sequence of vertices $(u_1, \dots, u_n)$ such that for each $i \in [n]$ the leaf $\ell_{u_i}$ is not contained in $d_{u_j}$ for all $j \in [n]$ with $j \ge i$. We now interpret this sequence in terms of the sign matrix of $M$. By the construction of $\overline{\MT_C(M)}$, for each $i \in [n]$ the move $u_i$ is the unique element of $M$ that reduces the distance of the the circuit $z$ that is supported on $\{\ell_{u_i}, n+c \}$ for some $c \in [m]$. In particular, $u_i$ is applicable $z$ so it follows that $\supp(u_i^+) = \{\ell_{u_i}\}$ or $\supp(u_i^-) = \{\ell_{u_i}\}$. Without loss of generality assume $\supp(u_i^+) = \{\ell_{u_i}\}$ and $\ell_{u_i} \in L(u_i)$, otherwise we may consider $-u_i$ or swap the two subgraphs below $u_i$ in drawing of $\MT_C(M)$. By construction, we have that $\supp(u_i^-) \subseteq R(u_i) = \ell(d_{u_i})$.

    Since $\ell_{u_1}$ is not contained in any subgraph $d_v$ for all $v \in V(\MT_C(M))$, it follows that the column of $M$ indexed by $\ell_{u_1}$ has one nonzero entry in the row indexed by $u_1$. By the assumptions above, the sign of this entries is $-$ and is unique in the row. Therefore $(u_1, \ell_{u_1})$ gives a valid move for the matrix $M$. After removing row $u_i$ and column $\ell_{u_i}$ from $M$, we similarly follows that $(u_2, \ell_{u_2})$ is a valid move. We continue in this way until we are left with the matrix $\sign(M_B)$ restricted to the last $m$ columns.
    We may then apply the moves $s_B$ to delete every entry from the matrix. Thus we have shown that the matrix $M$ is winnable by the sequence of moves
    \[
        (u_1, \ell_{u_1}), \, 
        (u_2, \ell_{u_2}), \, 
        \dots ,\, 
        (u_n, \ell_{u_n}), \, 
        s_B.
    \]
    Therefore, by Proposition~\ref{prop: first kind iff winnable}, we have $C$ admits a gluing of the first kind. 
\end{proof}

\section{Monomial curves in \texorpdfstring{$\mathbb A^4$}{A4}}\label{sec: dim4 mon curves}

In this section we explore monomial curves in $\mathbb A^4$. Let $A = \begin{pmatrix}
    a_1 & a_2 & a_3 & a_4
\end{pmatrix}$ be a matrix of positive integers. Let $M$ be a minimal Markov basis for $A$. If $A$ is a complete intersection, then by Corollary~\ref{cor: ci dist red iff dist red circuits} we have that $M$ is distance reducing if and only if $M$ reduces the distance of the circuits of $A$. The next example shows that if $A$ is not a complete intersection then the same result does not necessarily hold.

\begin{example}\label{example: dim4 reduces circuits but not reducing}
    Consider the matrix $A = \begin{pmatrix}
        14 & 21 & 23 & 29
    \end{pmatrix}$ and the minimal Markov basis
    \[
    M = \begin{bmatrix}
        1 & 1 & 1 & -2 \\
        3 & -2 & 0 & 0 \\
        3 & 1 & -4 & 1 \\
        7 & 0 & -3 & -1
    \end{bmatrix}.
    \]
    It is not difficult to show that $M$ distance reduces the circuits of $A$. However, $M$ cannot reduce the distance of the element $(1, 4, -3, -1) \in \ker(A)$. We note that the matrix $A$ does not admit any gluing.
\end{example}

In this section we give a complete description of the distance reducing Markov bases for monomial curves in $\mathbb A^4$ and admit a gluing. In particular, we study the glued non-complete intersections and show the circuits characterise the distance reducing property. First we give an explicit description of the complete intersection cases.

\subsection{Complete intersections}

For monomial curves in $\mathbb A^4$, there are two types of complete intersections: 
\[
(((a_1 \circ a_2) \circ a_3) \circ a_4)
\quad \text{and} \quad
((a_1 \circ a_2) \circ (a_3 \circ a_4)).
\]
In each case, the minimal Markov bases can be characterised by Theorem~\ref{thm: characterisation dist red first kind} and Theorem~\ref{thm: dist red ci implies first kind}.

\begin{corollary}\label{cor: dim 4 markov basis type 211}
    Suppose that $A = \begin{pmatrix}
        a_1 & a_2 & a_3 & a_4
    \end{pmatrix}$ is a complete intersection with type $(((a_1 \circ a_2) \circ a_3) \circ a_4)$. Suppose that $M$ is a minimal Markov basis of $A$ consisting of
    \[
    b = (b_1, -b_2, 0, 0), \quad
    c = (c_1, c_2, -c_3, 0), \quad
    d = (d_1, d_2, d_3, -d_4)
    \]
    where $b_1, b_2, c_2, d_4 > 0$ and $c_1, c_2, d_1, d_2, d_3 \ge 0$. Without loss of generality, we assume $b_1 > b_2$. Then $M$ is distance reducing if and only if each of the following conditions is satisfied:
    \begin{enumerate}
        \item[(a)] $c_1 < c_2 + c_3$,
        \item[(b)] ($c_1 = 0$ and $c_3 < c_2$) or $d_1 + d_3 < d_2 + d_4$,
        \item[(c)] $c_1 + c_2 < c_3$ or $d_1 + d_2 < d_3 + d_4$.
    \end{enumerate}
\end{corollary}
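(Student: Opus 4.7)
The plan is to recognise that the hypothesis places us squarely in the setting of Section~\ref{sec: monomial curves first kind}: the gluing type $(((a_1 \circ a_2) \circ a_3) \circ a_4)$ is a gluing of the first kind, and the assumption $b_1 > b_2$ is exactly the normalisation $a_1 < a_2$ used there. Thus Theorem~\ref{thm: characterisation dist red first kind} applies, and $M$ is distance reducing if and only if it satisfies the three conditions $R_{2,3}$, $R_{2,4}$, $R_{3,4}$ from Definition~\ref{def: first kind Rij conditions}. The proof will therefore just consist of unpacking what each $R_{i,j}$ says for the explicit Markov basis $\{b,c,d\}$, and checking that the resulting inequalities collapse to the stated (a), (b), (c).

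First I would fix the translation to the notation of Definition~\ref{def: first kind Rij conditions}: set $u_2 = b$, $u_3 = c$, $u_4 = d$, so that $(u_{2,1},u_{2,2}) = (b_1,b_2)$, $(u_{3,1},u_{3,2},u_{3,3}) = (c_1,c_2,c_3)$ and $(u_{4,1},u_{4,2},u_{4,3},u_{4,4}) = (d_1,d_2,d_3,d_4)$. Then I would go through the three pairs $(i,j)$ in turn.

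For $R_{2,3}$ the middle range of $\ell$ in part (ii) is empty, and part (i) reduces to $b_1 < b_2$, which fails by hypothesis. Hence $R_{2,3}$ is equivalent to part (iii), namely $c_1+c_2+c_3 < 2(c_2+c_3)$, i.e.\ $c_1 < c_2+c_3$, giving (a). For $R_{3,4}$ again part (ii) is vacuous; part (i) reads $c_1+c_2 < c_3$ and part (iii) reads $d_1+d_2+d_3+d_4 < 2(d_3+d_4)$, i.e.\ $d_1+d_2 < d_3+d_4$, giving (c). For $R_{2,4}$, part (i) again fails since $b_1 > b_2$, part (ii) is the single value $\ell = 3$ and requires $u_{3,m}=0$ for $m \in [2]\setminus\{2\} = \{1\}$ together with $u_{3,2} > u_{3,3}$, i.e.\ $c_1 = 0$ and $c_2 > c_3$, while part (iii) reads $d_1+d_2+d_3+d_4 < 2(d_2+d_4)$, i.e.\ $d_1+d_3 < d_2+d_4$; the disjunction is exactly (b).

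There is no real obstacle here: once Theorem~\ref{thm: characterisation dist red first kind} is invoked, the whole proof is a mechanical translation of Definition~\ref{def: first kind Rij conditions}. The only mildly delicate point is to confirm that the gluing type $(((a_1 \circ a_2) \circ a_3) \circ a_4)$ together with $a_1 < a_2$ satisfies the running assumptions of Section~\ref{sec: monomial curves first kind}, which is immediate, and to record the two vacuous cases of condition (ii) when $j = i+1$ so the reader can see why (a) and (c) each become a single inequality while (b) is a disjunction.
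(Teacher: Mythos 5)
Your proposal is correct and follows exactly the paper's own route: invoke Theorem~\ref{thm: characterisation dist red first kind} for the first-kind gluing and then mechanically unpack $R_{2,3}$, $R_{2,4}$, $R_{3,4}$ from Definition~\ref{def: first kind Rij conditions} into conditions (a), (b), (c). Your explicit case-by-case translation (including noting the vacuous instances of condition (ii) when $j=i+1$) matches what the paper does, only spelled out in slightly more detail where the paper writes ``similarly.''
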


\begin{proof}
    Since $A$ admits a gluing of the first kind, by Theorem~\ref{thm: characterisation dist red first kind}, we have that $M$ is distance reducing if and only if $M$ satisfies conditions $R_{2,3}, R_{2,4}, R_{3,4}$. Let us consider condition $R_{2,3}$. Since $b_1 > b_2$, it follows that $(2,3)$-(i) does not hold. By definition the condition $(2,3)$-(ii) does not hold. So $M$ satisfies $R_{2,3}$ if and only if $M$ satisfies condition $(2,3)$-(iii), which is equivalent to $c_1 < c_2 + c_3$. Similarly, it follows that $M$ satisfies $R_{2,4}$ if and only if condition (b) above holds, and $M$ satisfies $R_{3,4}$ if and only if condition (c) above holds.
\end{proof}

\begin{example}
    Consider the matrix $A = \begin{pmatrix}
        7 & 8 & 22 & 23
    \end{pmatrix}$. This matrix admits a gluing of type $(((7 \circ_{56} 8) \circ_{22} 22) \circ_{23} 23)$ and has a unique minimal Markov basis
    \[
    b = (8, -7,  0,  0), \quad
    c = (2,  1, -1,  0), \quad
    d = (1,  2,  0, -1).
    \]
    This Markov basis does not satisfy condition (a) in the above corollary, hence it fails to reduce the distance of the circuit $(0, 11, -4, 0)$.
\end{example}

\begin{corollary}\label{prop: dim 4 markov basis type 22}
    Suppose that $A = \begin{pmatrix}
        a_1 & a_2 & a_3 & a_4
    \end{pmatrix}$ is a complete intersection with type $((a_1 \circ a_2) \circ (a_3 \circ a_4))$. Suppose that $M$ is a minimal Markov basis of $A$ consisting of
    \[
    b = (b_1, -b_2, 0, 0), \quad
    c = (0, 0, c_3, -c_4), \quad
    d = (d_1, d_2, -d_3, -d_4)
    \]
    where $b_1, b_2, c_3, c_4 > 0$ and $d_1, d_2, d_3, d_4 \ge 0$. Without loss of generality, we assume that $b_1 > b_2$ and $c_3 > c_4$. Then $M$ is distance reducing if and only if the following conditions hold:
    \begin{itemize}
        \item[(i)] At least one of $d_1$ and $d_3$ is zero,

        \item[(ii)] $d_1 + d_3 < d_2 + d_4$.
    \end{itemize}
    If $d_1 = 0$, then $A$ admits a gluing of the first kind given by $(((a_1 \circ a_2)\circ a_4)\circ a_3)$. Otherwise if $d_3 = 0$ then $A$ admits a gluing of the first kind given by $(((a_3 \circ a_4)\circ a_2)\circ a_1)$.
\end{corollary}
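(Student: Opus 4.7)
The plan is to apply Corollary~\ref{cor: ci dist red iff dist red circuits}, which equates, for complete intersection $A$, the distance-reducing property of $M$ with $M$ reducing the distance of the circuits of $A$. The task then reduces to characterising when $M$ reduces the circuits, and I would do this by direct computation on a single distinguished circuit.

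For the forward direction, I would focus on the primitive circuit $z = (0, z_2, 0, -z_4)$ supported on $\{a_2, a_4\}$, where $z_2 a_2 = z_4 a_4 = \lcm(a_2, a_4)$. I first show that neither $b$ nor $c$ can reduce the distance of $z$: when $b$ is applicable to $z^+$ (via $b^- \le z^+$, i.e.\ $z_2 \ge b_2$), applying $b$ changes the $1$-norm by $b_1 - b_2 > 0$ by assumption; analogously for $c$ using $c_3 > c_4$. Hence $d$ is the only candidate to reduce $z$. Since $\supp(z^+) \subseteq \{2\}$ and $\supp(z^-) \subseteq \{4\}$, the applicability of $d$ forces $d_1 = 0$ (if $d$ is applied to $z^+$, together with $d_2 \le z_2$) or $d_3 = 0$ (if $d$ is applied to $z^-$, together with $d_4 \le z_4$). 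This yields condition (i). Writing the reduction inequality in either case then gives $d_3 < d_2 + d_4$ or $d_1 < d_2 + d_4$, which together with (i) is exactly condition (ii).

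For the backward direction, assume (i) and (ii), and without loss of generality $d_1 = 0$. By Corollary~\ref{cor: ci dist red iff dist red circuits} it suffices to verify that $M$ reduces each of the six circuits of $A$. The circuits $z_{1,2}$ and $z_{3,4}$ are multiples of $b$ and $c$ respectively and so are reduced trivially, while $z_{2,4}$ is reduced by $d$ by reversing the forward-direction calculation. For the three mixed circuits $z_{1,3}, z_{1,4}, z_{2,3}$, I would show that $b$ or $c$ is always applicable and reduces: the hypothesis $d_1 = 0$ combined with $d \in \ker(A)$ gives $d_2 a_2 = d_3 a_3 + d_4 a_4 = \lcm(\gcd(a_1, a_2), \gcd(a_3, a_4))$, so $a_2$ divides the gluing point. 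This divisibility, together with the circuit equations $b_1 a_1 = b_2 a_2$ and $c_3 a_3 = c_4 a_4$, gives the bounds $b_1 \le z_1$ for $z_{1,\bullet}$ and $c_3 \le z_3$ for $z_{\bullet,3}$, making $b$ (resp.\ $c$) applicable; the reduction then follows from $b_1 > b_2$ (resp.\ $c_3 > c_4$).

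For the first-kind gluing claim, I would invoke Theorem~\ref{thm: dist red ci implies first kind}, which guarantees that some first-kind gluing of $A$ exists once $M$ is known to be distance reducing. To identify the specific order, I would trace through the construction of Section~\ref{sec: ci mon curves}: with $d_1 = 0$, the sign pattern of $w = d$ fixes the case of Lemma~\ref{lem: w cannot reduce circuits on one side} that applies, and the pruning procedure of Lemma~\ref{lem: sequence of special vertices} applied to the decorated partial gluing tree of $M$ yields the asserted first-kind order. The symmetric case $d_3 = 0$ is handled by swapping the roles of $(a_1, a_2)$ and $(a_3, a_4)$. The main obstacle is the backward-direction case analysis for the mixed circuits: extracting, from the arithmetic of $d_1 = 0$ and condition (ii), the precise divisibility estimates needed to guarantee applicability of $b$ or $c$ to each of $z_{1,3}, z_{1,4}, z_{2,3}$ requires careful bookkeeping between the gluing relation and the primitive circuit coordinates.
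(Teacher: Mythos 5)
The paper records this corollary without a proof (in contrast to Corollary~\ref{cor: dim 4 markov basis type 211}, which is proved by passing to the $R_{i,j}$ conditions of Theorem~\ref{thm: characterisation dist red first kind}), so I am judging your argument on its own terms. Your overall strategy --- reduce to the circuits via Corollary~\ref{cor: ci dist red iff dist red circuits} and check the six circuits --- is legitimate, and your forward direction is correct: $b$ and $c$ cannot reduce $z_{2,4}$ because $b_1>b_2$ and $c_3>c_4$, applicability of $d$ to $z_{2,4}$ forces $d_1=0$ or $d_3=0$ (the other applicability cases would force $d^+$ or $d^-$ to vanish), and the norm computation in either case yields condition~(ii).

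The genuine gap is in the backward direction, in the assertion that for the mixed circuits ``$b$ or $c$ is always applicable and reduces,'' and specifically for $z_{2,3}$. With your normalisation $d_1=0$, the bound $b_1\le z_1$ for $z_{1,3}$ and $z_{1,4}$ does hold ($a_2\mid\lcm(\gcd(a_1,a_2),\gcd(a_3,a_4))$ gives $\lcm(a_1,a_2)\mid\lcm(a_1,a_j)$ for $j=3,4$), but the bound $c_3\le z_3$ for $z_{2,3}$ amounts to $\lcm(a_3,a_4)\le\lcm(a_2,a_3)$, which does not follow from $d_1=0$ and is false in general. Take $A=(4,6,12,24)$ with $b=(3,-2,0,0)$, $c=(0,0,2,-1)$, $d=(0,2,-1,0)$: this is a minimal Markov basis of the required shape, conditions (i) and (ii) hold and $M$ is indeed distance reducing, yet $z_{2,3}=(0,2,-1,0)$ has $z_3=1<2=c_3$, so $c$ is not applicable and $b$ only increases the norm; the reduction comes from $d$. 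In general $d$ reduces $z_{2,3}$ (applied to $z^+$, with $z_3\ge d_3$) precisely when $d_4<d_2+d_3$, which is \emph{not} condition (ii), so you must additionally prove that $d_4\ge d_2+d_3$ forces $z_3\ge c_3$, using the gluing arithmetic --- exactly the bookkeeping you defer, and it cannot be discharged with $b$ and $c$ alone. A cleaner route is to show that $d_1=0$ lets you reorder the coordinates as $(a_3,a_4,a_2,a_1)$ so that $\{c,-d,-b\}$ is in the staircase form of a gluing of the first kind, and then read off the $R_{i,j}$ conditions of Theorem~\ref{thm: characterisation dist red first kind}; note in passing that this produces the order $(((a_3\circ a_4)\circ a_2)\circ a_1)$, so your plan of extracting ``the asserted first-kind order'' for $d_1=0$ from the pruning machinery of Section~\ref{sec: decorated gluing trees} is also left unverified and needs care.
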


\begin{example}
    Consider the matrix $A = \begin{pmatrix}
        90 & 126 & 350 & 525
    \end{pmatrix}$. This matrix admits a gluing of type $(((90 \circ_{630} 126) \circ_{3150} (350 \circ_{1050} 525))$ and a minimal Markov basis
    \[
    b = ( 7, -5,  0,  0), \quad
    c = ( 0,  0,  3, -2), \quad
    d = (14, 15, -3, -4).
    \]
    Since $d_1 \neq 0$ and $d_3 \neq 0$, it follows that $M$ does not reduce the distance of the circuit $(0, 25, 0, -6)$. 
\end{example}

\subsection{Glued non-complete intersections}

Throughout this section, we consider a matrix $A$ that admits a gluing but is not a non complete intersection. The main result of this section is Theorem~\ref{thm: dim4 nci glue}, which shows that a minimal Markov basis for $A$ is distance reducing if and only if it reduces the distance of the circuits of $A$.

The gluing is given by $A = A' \circ a_4$ where $A' = \begin{pmatrix}
    a_1 & a_2 & a_3
\end{pmatrix}$
is not a complete intersection. By Theorem~\ref{thm: Herzog dim 3}, we have that $A'$ has a unique minimal Markov basis, which gives us $3$ indispensable elements for any Markov basis of $A$:
\[
g_1 = (-c_1, v_{12}, v_{13}, 0),\ 
g_2 = (v_{21}, -c_2, v_{23}, 0),\ 
g_3 = (v_{31}, v_{32}, -c_3, 0).
\]
By \cite[Theorem~1.4]{rosales1997presentations} or, more generally, \cite[Theorem~2.6]{morales2005complete}, a gluing introduces exactly one new generator. In this case, we have that any Markov basis of $A$ contains exactly one more element $h = (h_1, h_2, h_3, -h_4)$. 
We denote by $M$ the Markov basis $\{g_1, g_2, g_3, h\}$. We denote by $M'$ the unique minimal Markov basis $\{g_1', g_2', g_3'\}$ of $A'$. This gives us the following.

\begin{proposition}\label{prop: dim4 nci gluing gens}
    Let $A = \begin{pmatrix}
        a_1 & a_2 & a_3 & a_4
    \end{pmatrix}$ be a matrix and $A' = \begin{pmatrix}
        a_1 & a_2 & a_3
    \end{pmatrix}$ a submatrix of $A$. Suppose that $A'$ is not a complete intersection and $A'$ has unique minimal Markov basis 
    \[
    M' = \{
    g_1' = (-c_1, v_{12}, v_{13}),\  
    g_2' = (v_{21}, -c_2, v_{23}),\  
    g_3' = (v_{31}, v_{32}, -c_3)\}
    \]
    as in Theorem~\ref{thm: Herzog dim 3}. If $A$ admits a gluing, i.e., there exists $x \in \NN A' \cap a_4\NN$
    such that $x\ZZ = \ZZ A' \cap a_4\ZZ$, then any minimal Markov basis $M$ of $A$ is given by
    \[
    M = \{
    g_1 = (g_1', 0), \ 
    g_2 = (g_2', 0), \ 
    g_3 = (g_3', 0), \ 
    h = (h_1, h_2, h_3, -h_4)\} \subseteq \ker(A), 
    \]
    for some $h_1, h_2, h_3 \ge 0$ and $h_4 > 0$.
\end{proposition}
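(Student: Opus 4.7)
My plan is to apply the gluing theorem \cite[Theorem~1.4]{rosales1997presentations} (or more generally \cite[Theorem~2.6]{morales2005complete}) together with the uniqueness of the minimal Markov basis of $A'$ given by Theorem~\ref{thm: Herzog dim 3}-(\ref{case: non ci thm Herzog dim 3}).

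First, I view $\begin{pmatrix} a_4 \end{pmatrix}$ as a $1 \times 1$ matrix: its kernel is trivial, so it is vacuously a complete intersection with empty minimal Markov basis. Applying the cited gluing theorem to $A = A' \circ a_4$, the toric ideal $I_A$ is minimally generated by the lifts of a minimal Markov basis of $A'$ (extended by a zero in the fourth coordinate) together with one additional binomial $x^{z^+} - x^{z^-}$ corresponding to a gluing element $z \in \ker(A)$ satisfying $\supp(z^+) \subseteq \{1,2,3\}$ and $\supp(z^-) = \{4\}$. In particular, every minimal Markov basis of $A$ has cardinality $|M'| + 1 = 4$.

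Second, Theorem~\ref{thm: Herzog dim 3}-(\ref{case: non ci thm Herzog dim 3}) asserts that $A'$ has a unique minimal Markov basis $M' = \{g_1', g_2', g_3'\}$, so each $g_i'$ is indispensable for $I_{A'}$. The cited gluing theorem describes the minimal generators of $I_A$ multi-degree by multi-degree: the degrees lying in the image of $\NN A'$ recover exactly the minimal generators of $I_{A'}$ (lifted by a zero), while the degree of the gluing element contributes one new generator. Consequently, any minimal Markov basis $M$ of $A$ contains, up to sign, the three lifted indispensables $g_i = (g_i', 0)$ for $i \in \{1, 2, 3\}$.

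Third, the remaining element $h \in M$ inherits the support structure of the gluing element, namely $\supp(h^+) \subseteq \{1, 2, 3\}$ and $\supp(h^-) \subseteq \{4\}$ after fixing a sign convention. This gives $h = (h_1, h_2, h_3, -h_4)$ with $h_1, h_2, h_3 \ge 0$; the strict positivity $h_4 > 0$ follows because $h \notin \ker(A') \times \{0\}$, for otherwise $\{g_1, g_2, g_3\}$ alone would fail to generate $I_A$. The main obstacle is extracting from the cited gluing theorem the statement that \emph{every} minimal Markov basis of $A$---not merely a distinguished one---decomposes in this fashion. This ultimately relies on uniqueness of $M'$ to ensure that the relevant multi-graded pieces of $I_A$ modulo the irrelevant ideal are one-dimensional.
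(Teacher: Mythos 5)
Your proposal matches the paper's own justification, which is given in the paragraph preceding the proposition: the paper likewise invokes \cite[Theorem~1.4]{rosales1997presentations} (or \cite[Theorem~2.6]{morales2005complete}) to get exactly one new generator from the gluing, and uses the uniqueness of $M'$ from Theorem~\ref{thm: Herzog dim 3} to conclude that the three lifted elements $(g_i',0)$ appear in every minimal Markov basis of $A$, with the fourth element $h$ inheriting the sign pattern of the gluing element. Your version is, if anything, slightly more explicit than the paper about why indispensability and the multigraded count force \emph{every} minimal Markov basis to have this shape.
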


With the notation as above, we have the following.

\begin{proposition} \label{prop: dim4 nci reduces circuits implies reduces dim3 circuits}
    Suppose that $M$ reduces the distance of the circuits of $A$. Then $M'$ reduces the distance of the circuits of $A'$.
\end{proposition}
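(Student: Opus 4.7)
My plan is to prove the contrapositive: assume $M'$ fails to reduce some circuit of $A'$, and exhibit a circuit of $A$ that $M$ fails to reduce. After permuting coordinates so that $a_1 < a_2 < a_3$, Theorem~\ref{thm: dim3 nci} applied to $A'$ tells us $M'$ is not distance reducing precisely when \emph{both} $v_{21} \geq c_2 + v_{23}$ and $v_{31} \geq v_{32} + c_3$ hold. Call these conditions \emph{(not A)} and \emph{(not B)}.

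The first candidate circuit to test is $z^* := (0, \gamma a_3, -\gamma a_2, 0) \in \ker(A)$, the embedding of the distinguished circuit from Theorem~\ref{thm: dim3 nci}. Since each $g_k = (g_k', 0)$, any reduction of $z^*$ by $g_k$ would restrict to $g_k'$ reducing $\gamma(0, a_3, -a_2)$, which $M'$ does not do; so only $h$ could reduce $z^*$. Because $(z^*)^{+}$ and $(z^*)^{-}$ are supported on the single coordinates $\{2\}$ and $\{3\}$, while $h^-$ is supported on coordinate $4$, a direct applicability check forces $h$ itself to be a circuit of $A$ on $\{2, 4\}$ or $\{3, 4\}$, with $h_2 > h_4$ or $h_3 > h_4$ respectively in order for the $1$-norm to strictly decrease. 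If neither holds, then $M$ fails to reduce $z^*$ and we are done.

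The remaining cases are symmetric; I describe the case $h = (0, h_2, 0, -h_4)$ with $h_2 > h_4$. Here I switch to the circuit $z_{3, 4} = \alpha e_3 - \beta e_4$ of $A$. Support considerations, using that every $v_{ij}$ is strictly positive by Theorem~\ref{thm: Herzog dim 3}, rule out $g_1$ and $g_2$ as inapplicable to $z_{3,4}$. The only applicability of $g_3$ is via $\alpha \geq c_3$, and the resulting reduction inequality $v_{31} + v_{32} < c_3$ is obstructed by \emph{(not B)}, which gives $v_{31} + v_{32} \geq 2 v_{32} + c_3 > c_3$. Meanwhile the only applicability of $h$ to $z_{3, 4}$ is via $z_{3, 4}^-$ (since $h$ has support $\{2, 4\}$ and $z_{3, 4}$ has support $\{3, 4\}$), and the reduction condition there simplifies to $h_2 < h_4$, contradicting the case assumption. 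So no element of $M$ reduces $z_{3, 4}$. The mirror case, where $h$ is a circuit on $\{3, 4\}$, uses $z_{2, 4}$ together with \emph{(not A)} in the same way.

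The main obstacle is the applicability bookkeeping: for each of the four Markov basis elements and each candidate circuit ($z^*$, $z_{2,4}$, $z_{3,4}$), one must examine all four applicability directions and verify that the sign inequalities \emph{(not A)} and \emph{(not B)} exactly obstruct every potential reduction. The argument depends crucially on the uniqueness of $M'$ from Theorem~\ref{thm: Herzog dim 3}, which forces every off-diagonal entry $v_{ij}$ of the $g_k'$ to be strictly positive so that the support-based exclusions of $g_1, g_2$ go through cleanly, and also pins down the sign patterns of $g_3$ used in the reduction inequality.
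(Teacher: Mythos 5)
Your proof is correct. It follows the same opening gambit as the paper's argument --- use a circuit of $A'$, embedded into $\ker(A)$ with fourth coordinate zero, to force $h$ to be a circuit through coordinate $4$ with its non-$4$ entry exceeding $h_4$ --- but it closes differently. The paper takes an arbitrary unreduced circuit $z'$ of $A'$ (WLOG supported on $\{1,2\}$), then examines the circuit $z_{2,4}$ of $A$ to extract the inequality $c_2 > v_{21}+v_{23}$, and feeds that back to show $g_2'$ would after all have reduced $z'$, a contradiction internal to $A'$. You instead invoke the full inequality characterisation of Theorem~\ref{thm: dim3 nci} up front, so that \emph{both} $v_{21}\ge c_2+v_{23}$ and $v_{31}\ge v_{32}+c_3$ are available, and then show directly that one of $z_{2,4}$, $z_{3,4}$ is reduced by no element of $M$, contradicting the hypothesis on $A$. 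Both routes are sound; yours leans more heavily on Theorem~\ref{thm: dim3 nci} and on the normalisation $a_1<a_2<a_3$, while the paper's is more symmetric and self-contained. One small redundancy in your version: blocking $g_3$ on $z_{3,4}$ does not actually require \emph{(not B)}, since $a_1<a_2<a_3$ already forces $c_3 = (a_1 v_{31}+a_2 v_{32})/a_3 < v_{31}+v_{32}$ (the paper uses exactly this observation in Proposition~\ref{prop: dim4 nci glue forward}); by contrast \emph{(not A)} is genuinely needed in the mirror case to block $g_2$ on $z_{2,4}$.
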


\begin{proof}
    Assume by contradiction that $M'$ does not reduce all circuits of $A'$. Note that the result is symmetrical with respect to the permutation of the first three columns of $A$. So without loss of generality, we may assume that the circuit $z' = (z_1, -z_2, 0)$ is not distance reduced by $M'$. Since $A$ reduces the distance of the circuit $z = (z_1, -z_2, 0, 0)$, and none of $g_1$, $g_2$, $g_3$ reduces the distance of $z$, it follows that $h$ reduces the distance of $z$. In particular, $h$ is applicable to $z$ so we must have that either: $h_1 \le z_1$ and $h_2 = h_3 = 0$; or $h_2 \le z_2$ and $h_1 = h_3 = 0$. Without loss of generality, let us assume that $h_1 \le z_1$ and $h_2 = h_3 = 0$. Since $h$ reduces the distance of $z$, we must have that $|z| > |z-h|$. So
    \[
    z_1 + z_2 = |z| > |z-h| = |(z_1 - h_1, -z_2, 0, h_4)| = z_1 - h_1 + z_2 + h_4.
    \]
    Hence $h_1 > h_4$.

    It is easy to see that $h$ does not reduce the distance of the circuit $y = (0, y_2, 0, -y_4)$. So, by assumption, the distance of $y$ is reduced by at least one of $g_1$, $g_2$, or $g_3$. However, the moves $g_1$ and $g_3$ are not applicable to $y$. So the distance of $y$ must be reduced by $g_2$. So we have $y_2 \ge c_2$ and $|y| > |y + g_2|$, hence
    \[
    y_2 + y_4 = |y| > |y+g_2| = |(v_{21}, y_2 - c_2, v_{23}, -y_4)| = v_{21} + y_2 - c_2 + v_{23} + y_4.
    \]
    It follows that $c_2 > v_{21} + v_{23}$.

    Let us now consider the move $g_2' = (v_{21}, -c_2, v_{23}) \in M'$. By Theorem~\ref{thm: Herzog dim 3}, we have that $g_2'$ is a minimal element of type $2$. Therefore $g_2'$ is applicable to the circuit $z' = (z_1, -z_2, 0)$, so $c_2 \le z_2$. Since $c_2 > v_{21} + v_{23}$, it follows that
    \[
    |z'-g_2'| = |(z_1 - v_{21}, -z_2 + c_2, -v_{23})| \le z_1 + v_{21} + z_2 - c_2 + v_{23} < |z|.
    \]
    So $g_2'$ reduces the distance of the circuit $z'$, which contradicts our original assumption. This completes the proof.
\end{proof}

\begin{theorem}\label{thm: dim4 nci glue}
    Assume $a_1 < a_2 < a_3$. A minimal Markov basis $M$ is distance reducing if and only if all of the following conditions hold:
    \begin{enumerate}
        \item[(i)] $v_{21} < c_2 + v_{23}$ or $v_{31} < v_{32} + c_3$,
        \item[(ii)] $v_{21} + v_{31} < c_2$ or $h_1 + h_3 < h_2 + h_4$,
        \item[(iii)] $h_1 + h_2 < h_3 + h_4$.
    \end{enumerate}
    Moreover,
    \begin{itemize}
        \item If $M$ reduces the distance of the circuit $(0, z_2, -z_3, 0)$ then condition (i) holds,
        \item If $M$ reduces the distance of the circuit $(0, z_2, 0, -z_4)$ then condition (ii) holds,
        \item If $M$ reduces the distance of the circuit $(0, 0, z_3, -z_4)$ then condition (iii) holds.
    \end{itemize}
    In particular, $M$ is distance reducing if and only if $M$ reduces the distance of the three circuits listed above.
\end{theorem}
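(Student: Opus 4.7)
The plan is to prove the three specific ``circuit reduces implies condition'' implications stated in the ``moreover'' clause, then the converse that conditions (i), (ii), (iii) together imply $M$ is distance reducing for all of $\ker(A)$. The main biconditional follows from combining these, and the ``in particular'' statement is immediate: distance reducibility of $M$ trivially implies reducibility of the three listed circuits, which by the three implications forces (i)--(iii), which by sufficiency yields $M$ distance reducing.

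For each circuit, I would narrow down the applicable elements of $M = \{g_1, g_2, g_3, h\}$ via support analysis, then extract the inequality forced by distance reduction. For $(0, z_2, -z_3, 0)$ the fourth coordinate is zero while $h_4 > 0$, so $h$ is inapplicable; the reducer therefore lies in $\{g_1, g_2, g_3\}$ and descends via projection to a distance reducer of the corresponding circuit in $\ker(A')$ using $M'$, and Theorem~\ref{thm: dim3 nci} yields condition (i). For $(0, 0, z_3, -z_4)$, support checks eliminate $g_1$ and $g_2$; the move $g_3$ is either inapplicable or, when applicable via $z_3 \ge c_3$, fails to reduce because $g_3 \in \ker(A')$ together with $a_1, a_2 < a_3$ forces $v_{31} + v_{32} > c_3$. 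Hence $h$ reduces the circuit, and analysing the applicability case $z^- \ge h^-$ with $h_3 \le z_3$ produces $h_1 + h_2 < h_3 + h_4$, i.e., condition (iii). For $(0, z_2, 0, -z_4)$, $g_1$ and $g_3$ are inapplicable, so the reducer is $g_2$ or $h$; the $h$-subcase yields the second disjunct $h_1 + h_3 < h_2 + h_4$ of (ii) by an analogous applicability analysis, and the $g_2$-subcase (which requires $z_2 \ge c_2$) connects to the first disjunct $v_{21} + v_{31} < c_2$ via the identity $v_{21} + v_{31} = c_1$ from Proposition~\ref{prop: 3 dim nci gi sum to zero}.

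For sufficiency, I take an arbitrary $z \in \ker(A)$ and write $z = \alpha_1 g_1 + \alpha_2 g_2 + \alpha_3 g_3 + \beta h$ for some integers $\alpha_i, \beta$. When $\beta = 0$, the element $z$ lies in the image of $\ker(A')$, so condition (i) and Theorem~\ref{thm: dim3 nci} supply a reducer in $\{g_1, g_2, g_3\}$. When $\beta \neq 0$, assume $\beta > 0$ (replacing $z$ by $-z$ if needed) and use $g_1 + g_2 + g_3 = 0$ to normalise one of the $\alpha_i$ to zero. A case analysis on the sign patterns of the remaining two coefficients then exhibits a reducer in $M$: typically $h$ when the fourth coordinate of $z$ is large enough to admit $h^-$, certified by condition (iii), and a $g_i$ otherwise, certified by condition (ii). The main obstacle is this final case analysis: the disjunctive form of (ii) requires handling two substantially different reduction strategies, one via structural $A'$-inequalities and one via $h$, and the hardest subcase is $\beta > 0$ with mixed signs among the remaining $\alpha_i$, where the natural $h$-reducer may fail to be applicable for size reasons and one must invoke the first disjunct of (ii) to produce a $g_i$-based reducer. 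Managing this bifurcation uniformly under $a_1 < a_2 < a_3$ requires careful use of the kernel relations of $A'$ together with the minimality of $h$ as the gluing generator.
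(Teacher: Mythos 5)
Your architecture matches the paper's (forward direction circuit by circuit, sufficiency via $z=\alpha h+\beta g_1+\gamma g_2+\delta g_3$ normalised using $g_1+g_2+g_3=0$), but the first step of your forward direction fails. For the circuit $z=(0,z_2,-z_3,0)$ you dismiss $h$ as inapplicable ``because the fourth coordinate is zero while $h_4>0$''. That is not how applicability works: $h$ is applicable to $z$ whenever $z^+\ge h^+$ or $z^-\ge h^+$, which happens precisely when $h$ is itself a circuit supported on $\{2,4\}$ or $\{3,4\}$, e.g.\ $h=(0,h_2,0,-h_4)$ with $h_2\le z_2$; such an $h$ reduces the distance of $z$ exactly when $h_2>h_4$. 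In that situation nothing in your argument forces $g_2$ or $g_3$ to reduce $z$, so condition (i) does not follow. The paper closes this loophole in Proposition~\ref{prop: dim4 nci reduces circuits implies reduces dim3 circuits}, and doing so genuinely requires a \emph{second} circuit: if $h$ is the only reducer of a circuit of $A'$, then $h$ is a circuit on $\{i,4\}$ with $h_i>h_4$; such an $h$ cannot reduce the circuit supported on $\{j,4\}$ for $j\ne i$, so the one remaining applicable generator must, which forces an inequality (e.g.\ $c_2>v_{21}+v_{23}$) that makes $g_j'$ reduce the original circuit after all --- a contradiction. A single-circuit support check cannot produce this.

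Three smaller points. For the circuits $(0,z_2,0,-z_4)$ and $(0,0,z_3,-z_4)$, the bounds $h_2\le z_2$ and $h_3\le z_3$ needed to extract (ii) and (iii) do not follow from applicability (which only gives $z_4\ge h_4$); the paper derives them from $z-\alpha h\in\ker(A)$ together with $\ker(A)\cap\NN^4=\{0\}$, and you should make this explicit. The $g_2$-subcase of (ii) is the direct computation $||z+g_2||=v_{21}+(z_2-c_2)+v_{23}+z_4$, yielding $v_{21}+v_{23}<c_2$; the identity $v_{21}+v_{31}=c_1$ from Proposition~\ref{prop: 3 dim nci gi sum to zero} plays no role, so your stated route to that disjunct is not a derivation. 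Finally, in the sufficiency direction, subtract $\min\{\beta,\gamma,\delta\}$ from all three $g$-coefficients rather than merely zeroing one: this leaves all of them non-negative with at least one zero, which is exactly what makes the coordinatewise bounds $z_k\ge h_k$ and $z_k\ge v_{jk}$ immediate and eliminates the ``mixed signs'' subcase you flag as the main obstacle.
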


\begin{proof}
    On the one hand, if $M$ is distance reducing then by Proposition~\ref{prop: dim4 nci glue forward}, we have that condition (i), (ii), and (iii) hold. Conversely, if conditions (i), (ii), and (iii) hold then by Proposition~\ref{prop: dim4 nci glue reverse}, we have that $M$ is distance reducing.
\end{proof}

We immediately obtain the following corollary from Theorem~\ref{thm: dim4 nci glue}.

\begin{corollary}
    Let $M$ be a minimal Markov basis. Then $M$ is distance reducing if and only if $M$ is distance reducing for the circuits of $A$.
\end{corollary}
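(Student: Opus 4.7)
The plan is to derive this corollary directly from Theorem~\ref{thm: dim4 nci glue}, which already does all the heavy lifting. The forward direction is immediate from the definition: if $M$ is distance reducing, then $M$ reduces the distance of every element of $\ker(A)$, and in particular of every circuit.

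For the reverse direction, I would assume that $M$ reduces the distance of the circuits of $A$. Among these circuits are the three distinguished ones mentioned in Theorem~\ref{thm: dim4 nci glue}, namely $(0, z_2, -z_3, 0)$, $(0, z_2, 0, -z_4)$, and $(0, 0, z_3, -z_4)$. Applying the three implications in the ``moreover'' part of Theorem~\ref{thm: dim4 nci glue}, I conclude that conditions (i), (ii), and (iii) all hold. Then the equivalence in the same theorem yields that $M$ is distance reducing, which closes the argument.

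There is essentially no obstacle here: the corollary is just the specialization of Theorem~\ref{thm: dim4 nci glue} to the observation that the conditions (i), (ii), (iii) can each be detected from a single explicit circuit. The only thing to be careful about is that the three chosen circuits are genuine elements of $\ker(A)$ (which they are, by the gluing hypothesis ensuring $a_1 < a_2 < a_3$ and the existence of the relevant multiples), so they truly lie in the set whose distance is assumed to be reduced.
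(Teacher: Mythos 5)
Your proposal is correct and is exactly the paper's argument: the paper states this corollary as an immediate consequence of Theorem~\ref{thm: dim4 nci glue}, with the reverse direction following because reducing all circuits in particular reduces the three distinguished ones, which forces conditions (i)--(iii) and hence distance reduction. No gaps.
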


We now prove Propositions~\ref{prop: dim4 nci glue forward} and \ref{prop: dim4 nci glue reverse} that appear in the proof of Theorem~\ref{thm: dim4 nci glue}.

\begin{proposition}\label{prop: dim4 nci glue forward}
    Assume the setup of Theorem~\ref{thm: dim4 nci glue}. If $M$ is distance reducing then conditions (i), (ii), and (iii) hold.
\end{proposition}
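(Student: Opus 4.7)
The plan is to treat each of the three conditions by analyzing a distinguished circuit $z_{i,j}$ supported on coordinates $\{i,j\}$: the distance-reducing hypothesis forces some element of $M$ to be applicable to $z_{i,j}$ and strictly reduce its $L^1$ norm, and the sparse support of $z_{i,j}$ severely limits which elements are even applicable.

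Condition (i) falls out almost immediately. By Proposition~\ref{prop: dim4 nci reduces circuits implies reduces dim3 circuits}, the assumption that $M$ reduces the distance of the circuits of $A$ yields that $M'$ reduces the circuits of $A'$. Applying Theorem~\ref{thm: dim3 nci} to the non complete intersection $A'$ returns its condition (3), which is verbatim condition (i). For (ii) and (iii) I would argue directly using $z_{2,4} = (0,z_2,0,-z_4)$ and $z_{3,4} = (0,0,z_3,-z_4)$ respectively. The strict positivity of the $c_i$ and $v_{ij}$ in $M'$ immediately rules out $g_1, g_3$ as applicable to $z_{2,4}$ and rules out $g_1, g_2$ as applicable to $z_{3,4}$. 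Moreover, $g_3$ cannot reduce $z_{3,4}$ at all: the identity $v_{31}a_1 + v_{32}a_2 = c_3 a_3$ combined with $a_1,a_2 < a_3$ forces $v_{31} + v_{32} > c_3$, the opposite of the inequality required for $g_3$ to decrease the $L^1$ distance. Thus $z_{3,4}$ must be reduced by $h$, while $z_{2,4}$ may be reduced by $g_2$ (yielding the first alternative of (ii)) or by $h$ (yielding the second alternative).

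The element $h$ is automatically applicable to the negative coordinate of both circuits, since $h_4$ is the minimal positive integer with $h_4 a_4 \in \NN A'$ and $z_4 a_4$ lies in this set, forcing $h_4 \le z_4$. Expanding $\|z^- + h - z^+\|$ in the generic subcase where $h_j \le z_j$ (for the relevant $j \in \{2,3\}$) yields $h_1+h_3 < h_2+h_4$ and $h_1+h_2 < h_3+h_4$, respectively. The nongeneric subcase $h_j > z_j$ produces a weaker bound of the form $h_1 + h_k < 2z_j - h_j + h_4$, which combined with the inequality $h_j > z_j$ still implies the claimed condition via $2z_j - h_j < h_j$. A remaining degenerate possibility, that $h$ has two zero positive coordinates and is therefore itself a circuit applicable to $z^+$, is disposed of by a short direct calculation that again produces the required inequality. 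The main obstacle is this bookkeeping: tracking which direction $h$ can be applied, determining the sign of $z_j - h_j$ in each absolute value, and verifying that no subcase yields a strictly weaker inequality than the stated condition.
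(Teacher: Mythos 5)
Your proposal is correct and follows essentially the same route as the paper: condition (i) via Proposition~\ref{prop: dim4 nci reduces circuits implies reduces dim3 circuits} and Theorem~\ref{thm: dim3 nci}, and conditions (ii) and (iii) by analysing the circuits $(0,z_2,0,-z_4)$ and $(0,0,z_3,-z_4)$, ruling out the inapplicable $g_i$ (and $g_3$ via $c_3 < v_{31}+v_{32}$) and extracting the inequality from the reduction by $h$ or $g_2$. The only real deviation is how the term $|z_j-h_j|$ is resolved: the paper proves $z_j \ge h_j$ outright by writing $z-\alpha h \in \ker(A)$ and invoking $\ker(A)\cap\NN^4=\{0\}$, whereas you split on the sign of $z_j-h_j$ and check that the case $h_j>z_j$ still yields the stated inequality via $2z_j-h_j<h_j$ — both arguments are valid.
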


\begin{proof}
\noindent \textbf{Condition (i).} Suppose that $M$ is distance reducing. Then $M$ reduces the distances of the circuits of $A$. So by Proposition~\ref{prop: dim4 nci reduces circuits implies reduces dim3 circuits} we have that $M'$ reduces the circuits of $A'$. By Theorem~\ref{thm: dim3 nci} it follows that condition (i) holds.

\medskip

\noindent \textbf{Condition (ii).} Consider the circuit $z = (0, z_2, 0, -z_4)$ of $A$. By assumption $M$ reduces the distance of $z$. Note that neither $g_1$ nor $g_3$ are applicable to $z$, so either $g_2$ or $h$ reduces the distance of $z$. In both cases, we will show that condition (ii) holds. If $g_2$ reduces the distance, then we have $z_2 \ge c_2$ and $|z| > |z + g_2|$, hence
\[
z_2 + z_4 = |z| > |z+g_2| = |(v_{21}, z_2 - c_2, v_{23}, -z_4)| = v_{21} + z_2 - c_2 + v_{23} + z_4.
\]
Therefore, we have $v_{21} + v_{31} < c_2$ and so condition (ii) holds. On the other hand, suppose that $h$ reduces the distance of $z$. Since the 4th coordinate of $g_1$, $g_2$, $g_3$ are zero, it follows that $h_4 = \alpha z_4$ for some positive integer $\alpha$. Consider 
\[
z - \alpha h
= (-\alpha h_1, z_2 - \alpha h_2, -\alpha h_3, 0) \in \ker(A).
\]
Since $-\alpha h_1, -\alpha h_3 \le 0$ and $\ker(A) \cap \NN^4 = \{0\}$, we have that $z_2 - \alpha h_2 \ge 0$. In particular, we have $z_2 \ge h_2$. Since $h$ is applicable to $z$ we have $z_4 \ge z_4$. Since $h$ reduces the distance of $z$ we have $|z| > |z-h|$. It follows that
\[
z_2 + z_4 = |z| > |z-h| = |(-h_1, z_2-h_2, -h_3, -z_4 + h_4)|
= h_1 + z_2 - h_2 + h_3 + z_4 - h_4,
\]
and so we have $h_1 + h_3 < h_2 + h_4$. Hence, condition (ii) holds.

\medskip

\noindent \textbf{Condition (iii).} Consider the circuit $z = (0, 0, z_3, -z_4)$. By assumption $M$ reduces the distance of $z$. However, the moves $g_1$ and $g_2$ are not applicable to $z$. So either $g_3$ or $h$ reduces the distance of $z$. 

First, we show that $g_3$ does not reduce the distance of $z$. Since $a_1 < a_2 < a_3$ and $g_3 \in \ker(A)$, we have
\[
c_3 = \frac{a_1}{a_3} v_{31} + \frac{a_2}{a_3} v_{32} < v_{31} + v_{32}.
\]
So, if $g_3$ is applicable to $z$ then $z_3 \ge c_3$, hence we have
\[
|z+g_3| = |(v_{31}, v_{32}, z_3 - c_3, -z_4)| 
= v_{31} + v_{32} + z_3 - c_3 + z_4
> |z|.
\]
Therefore $g_3$ does not reduce the distance of $z$.

So we have deduced that $h$ reduces the distance of $z$. By a similar argument to the condition (ii) case, we will show that $z_3 \ge h_3$. Since the 4th coordinate of $g_1$, $g_2$, $g_3$ are zero, it follows that $h_4 = \alpha z_4$ for some positive integer $\alpha$. Consider 
\[
z - \alpha h
= (-\alpha h_1, - \alpha h_2, z_3 -\alpha h_3, 0) \in \ker(A).
\]
Since $-\alpha h_1, -\alpha h_2 \le 0$ and $\ker(A) \cap \NN^4 = \{0\}$, we have that $z_3 - \alpha h_3 \ge 0$. In particular, we have $z_3 \ge h_3$. Since $h$ is applicable to $z$ we have $z_4 \ge z_4$. Since $h$ reduces the distance of $z$ we have $|z| > |z-h|$. It follows that
\[
z_3 + z_4 = |z| > |z-h| = |(-h_1, -h_2, z_3-h_3, -z_4 + h_4)|
= h_1 + h_2 + z_3-h_3 + z_4 - h_4,
\]
and so we have $h_1 + h_2 < h_3 + h_4$. Hence, condition (iii) holds.
\end{proof}

\begin{proposition}\label{prop: dim4 nci glue reverse}
    Assume the setup of Theorem~\ref{thm: dim4 nci glue}. If conditions (i), (ii), (iii) holds, then $M$ is distance reducing.
\end{proposition}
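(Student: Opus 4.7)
The plan is to take an arbitrary $z \in \ker(A)$ and exhibit an element of $M$ that reduces its distance. By Proposition~\ref{prop: dim4 nci gluing gens}, every element of $\ker(A)$ may be written as
\begin{equation*}
z = \alpha g_1 + \beta g_2 + \gamma g_3 + \delta h
\end{equation*}
for some integers $\alpha, \beta, \gamma, \delta$. Using the relation $g_1 + g_2 + g_3 = 0$ from Proposition~\ref{prop: 3 dim nci gi sum to zero}, we can normalize $\alpha = 0$; replacing $z$ by $-z$ if necessary, we can also assume $\delta \geq 0$. The proof then splits on whether $\delta = 0$.

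If $\delta = 0$, then $z = \beta g_2 + \gamma g_3$ corresponds to an element of $\ker(A') \times \{0\}$, and condition (i) is exactly the criterion of Theorem~\ref{thm: dim3 nci} for $M' = \{g_1', g_2', g_3'\}$ to be distance reducing for $A'$, so some $g_i$ reduces the distance of $z$. If $\delta \geq 1$, then $(z)_4 = -\delta h_4 \leq -h_4$, so $h$ is applicable to $z^-$ in the additive direction, and a direct computation gives
\begin{equation*}
\|z - h\| - \|z\| = \sum_{i=1}^{3} \bigl(|(z)_i - h_i| - |(z)_i|\bigr) - h_4,
\end{equation*}
which is already negative whenever $(z)_i \geq h_i$ for $i = 1, 2, 3$. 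The remaining work is a case analysis on the signs of $\beta$ and $\gamma$, broadly mirroring the structure of the proof of Theorem~\ref{thm: dim3 nci} but enriched by the additional move $h$.

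When $\beta, \gamma \geq 0$ we have $(z)_1 \geq \delta h_1$ and $|(z)_4| \geq \delta h_4$, so either $(z)_2, (z)_3 \geq h_2, h_3$ (in which case $h$ reduces directly by the above formula) or some coordinate is too small, and then $g_1$ becomes applicable via the inequality $c_1 = v_{21} + v_{31} > v_{12} + v_{13}$ forced by $a_1 < a_2 < a_3$, exactly as in Theorem~\ref{thm: dim3 nci}. When $\beta < 0$ and $\gamma \geq 0$, we have $(z)_2 \geq c_2$ and $(z)_3 \geq v_{23}$, making $g_2$ the natural candidate: the first disjunct of condition (i) gives $\|z + g_2\| < \|z\|$ directly, while if only the second disjunct of (i) holds, we pivot to $g_3$ (using $v_{31} < v_{32} + c_3$) or to $h$, invoking the second disjunct of condition (ii) which was designed precisely to offset the $+h_1$ penalty in the formula above. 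The symmetric case $\beta \geq 0, \gamma < 0$ uses condition (iii) in place of (ii), and the case $\beta, \gamma < 0$ combines the two arguments.

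The main obstacle is bookkeeping: three disjunctive hypotheses interact with four sign subcases for $(\beta, \gamma)$, producing numerous sub-subcases. In each, we must pair the correct $u \in M$ with the correct linear inequality from (i), (ii), or (iii) yielding $\|z \pm u\| < \|z\|$. The guiding principle is that the $A'$-disjuncts of the hypotheses correspond to reductions via some $g_i$, while the $h$-disjuncts of (ii) and (iii) correspond to reductions via $h$; the normalization $\alpha = 0$ together with $\delta \geq 1$ restricts the shape of $z$ enough that in every subcase at least one such pairing applies. This mirrors, in reverse, the contrapositive style of Proposition~\ref{prop: dim4 nci glue forward}, where each specific circuit forced exactly one of the three conditions.
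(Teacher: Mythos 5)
Your overall strategy is the same as the paper's: write $z$ in terms of the Markov basis, use $g_1+g_2+g_3=0$ to normalise, dispose of the $z_4=0$ case via Theorem~\ref{thm: dim3 nci} and condition (i), and then match each remaining sign case with one of the conditions. But two steps in your sketch do not go through as stated. First, the normalisation: zeroing the $g_1$-coefficient leaves $\beta,\gamma$ of arbitrary sign, and your case $\beta,\gamma<0$ is exactly the situation where, after clearing signs, all three coefficients of $g_1,g_2,g_3$ are strictly positive; ``combines the two arguments'' is not an argument for it. The paper avoids this case entirely by subtracting $\lambda(g_1+g_2+g_3)$ with $\lambda=\min\{\beta,\gamma,\delta\}$, so that all three coefficients are non-negative and \emph{at least one is zero}; that is the normalisation that makes the case analysis close up, and it is the idea your write-up is missing.

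Second, in your case $\beta,\gamma\ge0$ the fallback claim that ``some coordinate is too small, and then $g_1$ becomes applicable'' is false. Applicability of $g_1$ requires $(z)_1\ge c_1=v_{21}+v_{31}$, which holds when both the $g_2$- and $g_3$-coefficients are at least $1$, but can fail when one of them vanishes: for $z=g_3+\delta h$ with $h_1=0$ one has $(z)_1=v_{31}<c_1$. In those subcases the reducing move is not $g_1$ but either $g_3$ (using $c_3<v_{31}+v_{32}$, forced by $a_1<a_2<a_3$) or $h$ together with condition (iii) (note $(z)_3\ge h_3$ and $(z)_4\ge h_4$ there), which is precisely how the paper's Cases 1.1, 1.2 and 2 conclude. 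So the skeleton is right, but the pairing of moves with conditions in the subcases needs to be redone with the min-subtraction normalisation before the proof is complete.
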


\begin{proof}
Suppose that conditions (i), (ii), (iii) hold and assume by contradiction that there exists $z \in \ker(A)$ whose distance cannot be reduced by $M$. If $z_4 = 0$, then we may apply Theorem~\ref{thm: dim3 nci} as condition (i) holds. So one of $g_1, g_2, g_3$ reduces the distance of $z$, which is a contradiction. So we may assume that $z_4 < 0$.

We have $z = \alpha h + \beta g_1 + \gamma g_2 + \delta g_3$ for some integers $\alpha, \beta, \gamma, \delta$. Since $z_4 < 0$, it follows that $\alpha > 0$. By Proposition~\ref{prop: 3 dim nci gi sum to zero}, we have $g_1 + g_2 + g_3 = 0$. Let $\lambda = \min\{\beta, \gamma, \delta \}$, then we have
\[
z = 
\alpha h + \beta g_1 + \gamma g_2 + \delta g_3 = 
\alpha h + (\beta - \lambda) g_1 + (\gamma - \lambda) g_2 + (\delta - \lambda) g_3.
\]
Among the coefficients for $g_1$, $g_2$, $g_3$ in the right-most expression for $z$, we have that at least one is zero and the others are non-negative. So, without loss of generality, we may assume that $\beta, \gamma, \delta \ge 0$ and at least one is zero. Note that if $\beta = \gamma = \delta = 0$, then we have $z = \alpha h$, so $h$ reduces the distance of $z$, a contradiction. So at least one of $\beta, \gamma, \delta$ is nonzero. We take cases on whether $\gamma$ is zero.

\medskip

\noindent \textbf{Case 1.} Assume $\gamma = 0$.
So we have $\beta, \delta \ge 0$. It follows that
\[
z = \alpha h + \beta g_1 + \delta g_3 = 
(\alpha h_1 - \beta c_1 + \delta v_{31},\ 
\alpha h_2 + \beta v_{12} + \delta v_{32},\ 
\alpha h_3 + \beta v_{13} - \delta c_3,\ 
-\alpha h_4).
\]
By condition (ii) either $h_1 + h_3 < h_2 + h_4$ or $c_2 > v_{21} + v_{23}$. If $h_1 + h_3 < h_2 + h_4$, then we have $z_2 = \alpha h_2 + \beta v_{12} + \delta v_{32} \ge h_2$ so
\[
|z-h| = |z_1 \pm h_1| + |z_2 - h_2| + |z_3 \pm h_3| + |-z_4 + h_4|
\le |z| + h_1 + h_3 - h_2 - h_4 < |z|.
\]
Hence $h$ reduces the distance of $z$, a contradiction and we are done. 

Suppose that $c_2 > v_{21} + v_{23}$. If $\beta > 0$ and $\delta > 0$, then we have $z_2 = \alpha h_2 + \beta v_{12} + \delta v_{32} \ge v_{12} + v_{32} = c_2$. So $g_2$ is applicable to $z$ and we have 
\[
|z+g_2| = |z_1 \pm v_{21}| + |z_2-c_2| + |z_3 \pm v_{32}| + |z_4| 
\le |z| + v_{21} - c_2 + v_{32} < |z|.
\]
Hence $g_2$ reduces the distance of $z$, a contradiction. So either $\beta = 0$ or $\delta = 0$.

\medskip

\noindent \textbf{Case 1.1.} Assume $\beta = 0$ and $\delta > 0$. In this case we have
\[
z = \alpha h + \delta g_3 = (\alpha h_1 + \delta v_{31}, \alpha h_2 + \delta v_{32}, \alpha h_3 - \delta c_3, -\alpha h_4).
\]
Since $z_1 = \alpha h_1 + \delta v_{31} \ge v_{31}$ and $z_2 = \alpha h_2 + \delta v_{32} \ge v_{31}$, we have that $g_3$ is applicable to $z$. Also, since $a_1 < a_2 < a_3$, it follows that $c_3 < v_{31} + v_{32}$. So
\begin{align*}
    |z - g_3| &= |(\alpha h_1 + (\delta - 1)v_{31},\ 
        \alpha h_2 + (\delta - 1)v_{32},\ 
        \alpha h_3 - (\delta - 1)c_2,\ 
        -\alpha h_4)|\\
        &= z_1 - v_{31} + z_2 - v_{32} + |z_3 \pm c_3| + z_4 \\
        & \le z_1 + z_2 + z_3 + z_4 - v_{31} - v_{32} + c_3 \\
        & < |z|.
\end{align*}
Hence $g_3$ reduces the distance of $z$, which is a contradiction.

\medskip

\noindent
\textbf{Case 1.2.} Assume $\delta = 0$ and $\beta > 0$. In this case we have
\[
z = \alpha h + \beta g_1 = (\alpha h_1 - \beta c_1,\ 
\alpha h_2 + \beta v_{12},\ 
\alpha h_3 + \beta v_{13},\ 
-\alpha h_4).
\]
We have that $h$ is applicable to $z$, and $z_3 = \alpha h_3 + \beta v_{13} \ge h_3$. So,
\begin{align*}
    |z - h| &= |((\alpha - 1) h_1 - \beta c_1,\ 
            (\alpha - 1) h_2 + \beta v_{12},\ 
            (\alpha - 1) h_3 + \beta v_{13},\ 
            -(\alpha - 1) h_4)|\\
        &= |z_1 \pm h_1| + z_2 - h_2 + z_3 - h_3 + z_4 - h_4 \\
        & \le z_1 + z_2 + z_3 + z_4 + h_1 + h_2 - h_3 - h_4 \\
        & < |z|,
\end{align*}
where the final inequality follows from condition (iii) $h_1 + h_2 < h_3 + h_4$. Hence $h$ reduces the distance of $z$, which is a contradiction.

\medskip 

\noindent\textbf{Case 2.} Assume $\gamma > 0$. So either $\beta = 0$ or $\delta = 0$. We will show that $\delta = 0$. If $\delta > 0$, then $\beta = 0$ and we have $z = \alpha h + \gamma g_2 + \delta g_3$. If $\delta > 0$, then we have $z_1 = \alpha h_1 + \gamma v_{21} + \delta v_{31} \ge v_{21} + v_{31} = c_1$. So $g_1$ is applicable to $z$. Since $a_1 < a_2 < a_3$, it follows that $c_1 > v_{12} + v_{13}$. So $g_1$ reduces the distance of $z$, a contradiction. So, we must have $\delta = 0$. 

Since $z_4 \ge h_4$, it follows that $h$ is applicable to $z$. Since $\delta = 0$, we have $z_3 = \alpha h_3 + \beta v_{13} + \gamma v_{23} \ge h_3$. Hence, we have
\[
|z-h| = |z_1 \pm h_1| + |z_2 \pm h_2| + |z_3 - h_3| + |-z_4 + h_4| \le 
|z| + h_1 + h_2 - h_3 - h_4 < |z|,
\]
where the final inequality follows from condition (iii) $h_1 + h_2 < h_3 + h_4$. Therefore $h$ reduces the distance of $z$, a contradiction.
\end{proof}

\section{Generalisations of \texorpdfstring{$1$}{1}-norm results}\label{sec: generalisation of 1 norm results}

In this section we extend results from \cite{aoki2005distance} to the non-homogeneous case.

\begin{remark}
    Colloquially, the result\cite[Proposition 3]{aoki2005distance} says: to check whether a set is distance reducing, it suffices to check whether it reduces the distance of the Graver basis. This proof carefully uses the assumption of homogeneity of $A$ and the definition of the $1$-norm. To see this clearly, consider the main claim from the proof that $\emptyset \neq \supp(z^+) \cap \supp(z_1^-)$. This claim follows from only one key assumption:
    \begin{itemize}
        \item $z$ is a move from $z_1^+$ that reduces the distance to $z_1^-$:
        \[
        ||(z_1^+ + z) - z_1^-|| < ||z_1^+ - z_1^-|| = ||z_1||.
        \]
    \end{itemize}
    Since the move $z$ is applicable to $z_1^+$, we have that $\supp(z_1^+) \subseteq \supp(z^-)$. If we assume by contradiction that $\supp(z^+) \cap \supp(z_1^-) = \emptyset$, then $\supp(z) \cap \supp(z_1^-) = \emptyset$ and we deduce the following:
    \[
    ||(z_1^+ + z) - z_1^-||
    = 
    ||z_1^+ + z|| + ||z_1^-||
    = 
    ||(z_1^+ - z^-) + z^+|| + ||z_1^-||
    = 
    ||z_1^+|| + ||z_1^-|| = ||z_1||.
    \]
    These equalities are immediate using the definition of the $1$-norm and the final equality uses the assumption of homogeneity, which means that $||z^+|| = ||z^-||$. However, this chain of equalities contradicts the previous strict inequality.

    In Theorem~\ref{thm: 1 norm reducing test via Graver basis} we show that the condition of homogeneity can be dropped. In particular, this generalises [Proposition~3, Aoki-Takemura] to the inhomogeneous case.
\end{remark}

\begin{proposition}\label{prop: Graver is 1 norm reducing}
    The Graver basis is strongly distance reducing, even when $A$ is inhomogeneous. 
\end{proposition}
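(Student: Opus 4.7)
The plan is to exhibit, for each nonzero $z \in \ker(A)$, a single Graver element $u$ that simultaneously verifies both strong distance-reduction conditions at $z^+$ and at $z^-$. I would obtain $u$ from the standard fact that every element of $\ker(A)$ admits a \emph{conformal} decomposition into Graver basis elements: there exist $u_1, \dots, u_k \in G(A)$ with
\[
z = u_1 + \dots + u_k, \qquad z^+ = u_1^+ + \dots + u_k^+, \qquad z^- = u_1^- + \dots + u_k^-.
\]
(This decomposition follows directly from the characterisation of $G(A)$ via absence of proper conformal decompositions, by induction on $\|z\|$.) Pick any such summand $u := u_i$.

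From $u^+ \le z^+$ and $u^- \le z^-$, the move $u$ is applicable both to $z^+$ (via $z^+ \ge u^+$) and to $z^-$ (via $z^- \ge u^-$). The next step is the norm computation. Because $u^+ \le z^+$ and $u^- \le z^-$, and $z^+$, $z^-$ have disjoint supports, the vectors $z^+ - u^+$ and $z^- - u^-$ are non-negative with disjoint supports, so $(z-u)^+ = z^+ - u^+$ and $(z-u)^- = z^- - u^-$. Hence
\[
\|z - u\| = \|z^+ - u^+\| + \|z^- - u^-\| = \bigl(\|z^+\| - \|u^+\|\bigr) + \bigl(\|z^-\| - \|u^-\|\bigr) = \|z\| - \|u\| < \|z\|.
\]
Translating this back: $d(z^+ - u, z^-) = \|z - u\| < \|z\| = d(z^+, z^-)$, so the first condition of distance reduction holds with $u$ applied to $z^+$; and $d(z^+, z^- + u) = \|z - u\| < d(z^+, z^-)$, so the second condition holds with $u$ applied to $z^-$. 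Thus $G(A)$ strongly reduces the distance of $z$.

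There is essentially no obstacle: the only place where homogeneity enters Aoki--Takemura's original proof is the step identifying $\|x\| = \|y\|$ for points in the same fiber, and the conformal-decomposition argument bypasses it entirely by working with $\|z^+\| + \|z^-\| = \|z\|$, which uses only disjointness of supports. The only care needed is to verify that subtracting $u^+$ from $z^+$ and $u^-$ from $z^-$ really yields the positive and negative parts of $z - u$; this is immediate from the support disjointness of $z^+$ and $z^-$.
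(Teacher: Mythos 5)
Your proof is correct and follows essentially the same route as the paper's: extract a Graver summand $u$ from a conformal decomposition of $z = x - y$, note that conformality makes $u$ applicable to both $z^+$ and $z^-$, and use additivity of the $1$-norm over the conformal decomposition to get the strict decrease. The only cosmetic difference is that you decompose $z$ fully into Graver summands at the outset, whereas the paper peels off one summand at a time with a norm-decreasing iteration; both yield the same conclusion.
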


\begin{proof} The proof is identical to the proof of the homogeneous case by Aoki-Takemura, however we include it here for completeness.
Let $x, y$ be two elements of a fiber. If $x-y$ belongs to the Graver basis then we are done. So, let us assume that $x-y$ does not belong to the Graver basis. Then $x-y$ admits a proper conformal decomposition $x - y = u + v$ for some $u, v \in \ker(A)$. If $u$ does not belong to the Graver basis then $u$ admits a proper conformal decomposition $u = u' + v'$. We then have $x - y = u' + (v' + v)$ is a proper conformal decomposition. If we continue in this way, we note that $||u|| = ||u'|| + ||v'|| > ||u'||$ so there are only finitely many steps until the first term is an element of the Graver basis. So we may assume that $u$ belongs to the Graver basis.

By conformality of the decomposition, we have $||x-y|| = ||u|| + ||v||$. Also by conformality, we have that $u$ is a move that is applicable to both $x$ and $y$. So it follows that 
\[
||(x - u) - y|| = ||x - (y + u)|| = ||v|| < ||x-y||.
\]
Hence the Graver basis is strongly distance reducing.
\end{proof}

The next example shows that the property of distance reduction is not transitive. For instance, if $M$ is a Markov basis and $B$ is distance reducing for $M$. Then it is not true that $B$ is a Markov basis.

\begin{example}
    Let $A = \begin{pmatrix} 2 & 3 & 4 \end{pmatrix}$.
    Consider the Markov basis 
    $M = \{(3,-2,0), (2,0,-1) \}$.
    The singleton $B = \{(2, 0, -1)\}$ is clearly not a Markov basis yet it reduces the distance of both elements of $M$. To see this, note that $(2,0,-1)$ is applicable to $(3,-2,0)$ and
    \[
    ||(3,-2,0) - (2,0,-1)|| = ||(1,-2,1)|| = 4 < 5 = ||(3,-2,0)||. 
    \]
\end{example}

The following theorem shows that, to check whether a set is distance reducing, it suffices to check that it distance reduces the Graver basis.

\begin{theorem}\label{thm: 1 norm reducing test via Graver basis}
    Let $B \subseteq \ker(A)$ be any subset. Then $B$ is distance reducing if and only if it is distance reducing for the Graver basis $G(A)$. Similarly, $B$ is strongly distance reducing if and only if it is strongly distance reducing for $G(A)$.
\end{theorem}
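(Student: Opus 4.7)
The forward direction is immediate, since $G(A) \subseteq \ker(A)$. The substance is the converse: assuming $B$ reduces the distance of $G(A)$, deduce that it reduces the distance of every $z \in \ker(A)$. The plan is to exploit the fact that every $z \in \ker(A)$ admits a conformal decomposition into Graver basis elements $z = g_1 + g_2 + \dots + g_k$, where $z^+ = \sum g_i^+$ and $z^- = \sum g_i^-$. This follows by iterating proper conformal decompositions $u = u' + v'$ whenever $u$ is not in the Graver basis; the process terminates since $\|u\| = \|u'\| + \|v'\| > \|u'\|$ implies each strict successor has strictly smaller $1$-norm. This is exactly the reduction step used in the proof of Proposition~\ref{prop: Graver is 1 norm reducing}.

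Fix such a decomposition of $z$. By hypothesis, some $u \in B$ reduces the distance of the Graver element $g_1$. Up to four symmetric cases, suppose $u$ is applicable to $g_1^+$ with $g_1^+ \geq u^-$, and $\|g_1 + u\| < \|g_1\|$. The key observation is that conformality gives the coordinatewise inequalities $z^+ \geq g_1^+$ and $z^- \geq g_1^-$, so $z^+ \geq g_1^+ \geq u^-$ and hence $u$ is applicable to $z^+$ as well. Conformality also gives the additive identity $\|z\| = \sum_i \|g_i\|$, and the triangle inequality yields
\[
\|z + u\| \;=\; \Bigl\|(g_1 + u) + \sum_{i \geq 2} g_i\Bigr\| \;\leq\; \|g_1 + u\| + \sum_{i \geq 2} \|g_i\| \;<\; \|g_1\| + \sum_{i \geq 2} \|g_i\| \;=\; \|z\|,
\]
so $u$ reduces the distance of $z$. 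The three remaining cases (subtracting $u$ from $z^+$, or adding/subtracting $u$ from $z^-$) are verified by identical arguments, with the roles of $g_1^\pm$ replaced appropriately and using $z^- \geq g_1^-$ instead.

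For the strong version, the same decomposition suffices: applying the hypothesis to $g_1$ produces a move $u \in B$ applicable to $g_1^+$ that reduces the distance, and separately a move $u' \in B$ applicable to $g_1^-$ that reduces the distance; by the argument above, $u$ transfers to a distance-reducing move applicable to $z^+$, and $u'$ transfers to a distance-reducing move applicable to $z^-$. Hence $B$ strongly reduces the distance of $z$.

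The only real subtlety, which is not so much an obstacle as a bookkeeping point, is tracking the sign in the triangle inequality step: one must confirm that applicability and reduction at the Graver element $g_1$ always lift to $z$ via the coordinatewise bound $z^\pm \geq g_1^\pm$ and the additivity $\|z\| = \sum_i \|g_i\|$. Crucially, no homogeneity hypothesis on $A$ enters, since the argument is purely about the conformal decomposition and the $1$-norm — this is precisely why the generalisation to the inhomogeneous case goes through.
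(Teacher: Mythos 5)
Your proposal is correct and follows essentially the same route as the paper: the paper uses a single-step proper conformal decomposition $z = g + h$ with $g \in G(A)$ and lifts the reducing move from $g$ to $z$ via Lemma~\ref{lem: 1-norm reduction and conformal decomposition}, which is exactly your triangle-inequality step with $\|z\| = \|g\| + \|h\|$ and $z^\pm \ge g^\pm$. Your inline handling of the strong case is a welcome addition, as the paper's written proof only treats the non-strong version explicitly.
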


\begin{proof}
    Clearly if $B$ is distance reducing then it is distance reducing for the Graver basis. On the other hand, suppose that $B$ reduces the distance of the Graver basis. For any $z \in \ker(A) \backslash G(A)$, we have that $z$ admits a proper conformal decomposition $z = g + h$ where $g \in G(A)$. Since $B$ reduces the distance of $G(A)$, there is a move $b \in B$ that distance reduces $g$. By \Cref{lem: 1-norm reduction and conformal decomposition}, it follows that the move $b$ also distance reduces $z$. So $B$ is distance reducing and we are done.
\end{proof}

\begin{remark}
    The above theorem shows that a minimal Markov basis $M$ is distance reducing if and only if $M$ is distance reducing for the Graver basis.
\end{remark}

Recall \Cref{prop: B d-reducing => Markov basis}, which shows that if a set is distance reducing then it is Markov basis. The following corollary shows that the sets that distance reduces the Graver basis are Markov bases.

\begin{corollary}
    Suppose that a set of moves $B$ is distance reducing for the Graver basis, then $B$ is a Markov basis.   
\end{corollary}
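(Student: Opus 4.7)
The plan is to chain together the two results immediately preceding the corollary. By Theorem~\ref{thm: 1 norm reducing test via Graver basis}, reducing the distance of the Graver basis $G(A)$ is equivalent to reducing the distance of all of $\ker(A)$; that is, $B$ being distance reducing for $G(A)$ already implies $B$ is distance reducing in the full sense of Definition~\ref{def: distance reducing}. Then Proposition~\ref{prop: B d-reducing => Markov basis} (Aoki--Takemura) says that any set that is distance reducing with respect to some metric is automatically a Markov basis, and the $1$-norm is such a metric.

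So my proof would read: assume $B$ reduces the distance of $G(A)$ with respect to the $1$-norm. By Theorem~\ref{thm: 1 norm reducing test via Graver basis}, $B$ reduces the distance of $\ker(A)$ with respect to the $1$-norm, i.e.\ $B$ is distance reducing. By Proposition~\ref{prop: B d-reducing => Markov basis}, $B$ is a Markov basis.

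There is no real obstacle here because all of the work is hidden inside Theorem~\ref{thm: 1 norm reducing test via Graver basis}, whose proof uses the conformal decomposition lemma to lift distance reduction from $G(A)$ to the whole kernel. The only thing to be careful about is that Proposition~\ref{prop: B d-reducing => Markov basis} requires distance reducing \emph{with respect to some metric}; since the $1$-norm on $\RR^n$ induces such a metric on $\NN^n$, the hypothesis is met and the conclusion follows at once. Thus the corollary is a two-line consequence and needs no further combinatorial argument.
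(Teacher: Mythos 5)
Your argument is correct and is exactly the paper's own proof: apply Theorem~\ref{thm: 1 norm reducing test via Graver basis} to upgrade distance reduction on $G(A)$ to distance reduction on all of $\ker(A)$, then invoke Proposition~\ref{prop: B d-reducing => Markov basis}. No further comment is needed.
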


\begin{proof}
    By Theorem~\ref{thm: 1 norm reducing test via Graver basis}, if $B$ is distance reducing for the Graver basis then it is distance reducing. So by Proposition~\ref{prop: B d-reducing => Markov basis}, it follows that $B$ is a Markov basis.
\end{proof}

\begin{lemma} \label{lem: 1-norm reduction and conformal decomposition}
    Suppose that $b \in \ker(A)$ is a move that distance reduces an element $x \in \ker(A)$. Suppose that $z \in \ker(A)$ has a conformal decomposition $z = x + y$ for some $y \in \ker(A)$. Then $b$ distance reduces $z$.
\end{lemma}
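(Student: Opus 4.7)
The plan is to exploit the two properties that make the $1$-norm interact nicely with conformal decompositions: additivity of the $1$-norm on conformal sums, and monotonicity of the positive/negative parts.

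First I would record the basic identity that for a conformal decomposition $z=x+y$ we have $z^+ = x^+ + y^+$ and $z^- = x^- + y^-$ with the pairs $(z^+,z^-)$, $(x^+,x^-)$, $(y^+,y^-)$ each having disjoint support. Expanding coordinate-wise gives
\[
\|z\| = \|z^+\| + \|z^-\| = \|x^+\|+\|y^+\|+\|x^-\|+\|y^-\| = \|x\|+\|y\|.
\]

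Next I would do a case split on which side $b$ is applicable to and which direction reduces the distance of $x$. By the symmetry between $z^+$ and $z^-$, and the symmetry between $+b$ and $-b$, I only need to handle one case, say $b$ is applicable to $x^+$ with $x^+ \geq b^+$ and $\|x - b\| < \|x\|$. The applicability then transfers to $z^+$ since $z^+ = x^+ + y^+ \geq x^+ \geq b^+$, so $b$ is applicable to $z^+$ in the subtractive direction. Finally, writing $z - b = (x-b) + y$ and applying the (ordinary) triangle inequality for the $1$-norm gives
\[
\|z - b\| \leq \|x - b\| + \|y\| < \|x\| + \|y\| = \|z\|,
\]
which is exactly the distance-reduction condition on $z^+$.

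The other three cases are entirely parallel: if $b$ is applicable to $x^+$ via $x^+ \geq b^-$ with $\|x+b\|<\|x\|$, replace $-b$ by $+b$ in the argument; if $b$ is applicable to $x^-$, swap the roles of the positive and negative parts, using $z^- \geq x^-$. In each case the conformality identity $\|z\|=\|x\|+\|y\|$ together with the triangle inequality delivers the reduction on $z$.

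There is essentially no deep obstacle here: the only subtle point is verifying that applicability of $b$ to $x^{\pm}$ really does propagate to $z^{\pm}$, which relies on the monotonicity $z^{\pm} \geq x^{\pm}$ coming from the conformality hypothesis, not merely from $z = x+y$. Apart from keeping the four sign cases straight, the computation is immediate from the triangle inequality once the conformal-additivity identity is in hand.
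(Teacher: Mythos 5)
Your proposal is correct and follows essentially the same route as the paper's proof: reduce to one case by symmetry, use conformality to get $\|z\|=\|x\|+\|y\|$ and $z^{+}\ge x^{+}\ge b^{+}$, and conclude via the triangle inequality $\|z-b\|\le\|x-b\|+\|y\|<\|x\|+\|y\|=\|z\|$. The only difference is that you spell out the sign/applicability cases that the paper dispatches with a single "without loss of generality".
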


\begin{proof}
    Without loss of generality, assume that $||x - b|| < ||x||$ and $x^+ \ge b^+$. Note that the conformal decomposition gives us that $||z|| = ||x|| + ||y||$ and $z^+ \ge b^+$ so we have
    \[
    ||z - b|| = ||x-b + y|| \le ||x-b|| + ||y|| < ||x|| + ||y|| = ||z||.
    \]
    Hence $b$ reduces the distance of $z$.
\end{proof}

\section{Distance irreducible elements}\label{sec: dist irred elements}

The distance irreducible elements of $\ker(A)$ are the set of moves that cannot be distance reduced. We compare the distance irreducible elements with the indispensable set and Graver basis by using the notions of semi-conformal decomposition and conformal decomposition. 

\begin{definition}
    Let $z \in \ker(A)$ and suppose that there exist $u,v \in \ker(A) \backslash \{0\}$ such that $z = u+v$. We recall that:
    \begin{itemize}
        \item $z = u+v$ is a \textit{proper conformal decomposition} and write $z = u +_c v$ if 
        \[
        \supp(u^+) \cup \supp(v^+) = \supp(z^+)
        \quad \text{and} \quad
        \supp(u^-) \cup \supp(v^-) = \supp(z^-)
        \]

        \item $z = u + v$ is a \textit{proper semi-conformal decomposition} and write $z = u +_{sc} v$ if 
        \[
        \supp(u^+) \subseteq [n] \backslash \supp(v^-)
        \quad \text{and} \quad
        \supp(v^-) \subseteq [n] \backslash \supp(u^+)
        \]
    \end{itemize}
    We introduce the following definitions:
    \begin{itemize}
        \item $z = u + v$ is a \textit{positive distance decomposition} and write $z = u +_{pd} v$ if
        \[
        u^+ \le z^+ 
        \quad \text{and} \quad
        ||v|| < ||z||
        \]
        \item $z = u + v$ is a \textit{negative distance decomposition} and write $z = u +_{nd} v$ if
        \[
        u^- \le z^-
        \quad \text{and} \quad
        ||v|| < ||z||
        \]
    \end{itemize}
\end{definition}

We define classes of indecomposable elements of $\ker(A)$ with respect to the above decompositions.

\begin{definition}\label{def: distance irreducible elements}
    We recall the indispensable and primitive moves:
    \begin{itemize}
        \item $S(A) = \{z \in \ker(A) \colon z \text{ has no semi-conformal decomposition} \}$ \textit{indispensable set}

        \item $G(A) = \{ z \in \ker(A) \colon z \text{ has no conformal decomposition} \}$ \textit{primitive elements}
    \end{itemize}

    We define the following sets of moves:
    \begin{itemize}
        \item $D^+(A) = \{ z \in \ker(A) \colon z \text{ has no positive distance decomposition}\}$ 

        \item $D^-(A) = \{ z \in \ker(A) \colon z \text{ has no negative distance decomposition}\}$

        \item $D^w(A) = D^+(A) \cup D^-(A)$ \textit{weakly distance irreducible elements}

        \item $D(A) = D^+(A) \cap D^-(A)$ \textit{distance irreducible elements}
    \end{itemize}
\end{definition}

\begin{proposition}\label{prop: dist irred is symmetric}
    The following hold:
    \[
        D^+(A) = -D^-(A), \quad 
        D(A) = -D(A), \quad
        D^w(A) = -D^w(A).
    \]
\end{proposition}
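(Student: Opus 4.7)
The plan is to establish the first equality $D^+(A) = -D^-(A)$ directly from the definitions, and then deduce the other two by elementary set-theoretic manipulations. The key observation is that negation interchanges the roles of positive and negative distance decompositions, because for any $u \in \ker(A)$ we have $(-u)^+ = u^-$ and $(-u)^- = u^+$, and because the $1$-norm satisfies $\|{-v}\| = \|v\|$.

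More precisely, I would argue as follows. Suppose $z \in \ker(A)$ admits a positive distance decomposition $z = u +_{pd} v$, meaning $u^+ \le z^+$ and $\|v\| < \|z\|$. Then $-z = (-u) + (-v)$ is a proper decomposition in $\ker(A)$, and using $(-u)^- = u^+ \le z^+ = (-z)^-$ together with $\|{-v}\| = \|v\| < \|z\| = \|{-z}\|$, we see that this is a negative distance decomposition of $-z$. By symmetry, starting from a negative distance decomposition of $-z$ and negating yields a positive distance decomposition of $z$. Hence $z$ has a positive distance decomposition if and only if $-z$ has a negative distance decomposition. Taking complements inside $\ker(A)$, this gives $z \in D^+(A) \iff -z \in D^-(A)$, i.e. $D^+(A) = -D^-(A)$.

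For the remaining two equalities, note that applying $-\mathrm{id}$ to both sides of $D^+(A) = -D^-(A)$ also yields $-D^+(A) = D^-(A)$. Since negation commutes with union and intersection of subsets of $\ker(A)$, we compute
\[
-D(A) = -\bigl(D^+(A) \cap D^-(A)\bigr) = (-D^+(A)) \cap (-D^-(A)) = D^-(A) \cap D^+(A) = D(A),
\]
and similarly
\[
-D^w(A) = -\bigl(D^+(A) \cup D^-(A)\bigr) = (-D^+(A)) \cup (-D^-(A)) = D^-(A) \cup D^+(A) = D^w(A).
\]

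There is essentially no obstacle here: the proof is a direct unravelling of the definitions, and the only thing to verify carefully is that the sign-flip identities for $u^\pm$ and $z^\pm$ correctly translate the inequality $u^+ \le z^+$ of a positive distance decomposition into the inequality $(-u)^- \le (-z)^-$ of a negative distance decomposition.
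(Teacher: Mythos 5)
Your proposal is correct and follows essentially the same route as the paper: negate a positive distance decomposition of $z$ to obtain a negative distance decomposition of $-z$ (and conversely), then deduce the equalities for $D(A)$ and $D^w(A)$ by elementary set operations. If anything, you are slightly more careful than the paper, which writes the norm comparison with $\le$ where the definition requires strict inequality and dismisses the last two equalities as immediate.
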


\begin{proof}
    Suppose that $z = u +_{pd} v$ is a positive distance decomposition. Then it follows that $-z = (-u) +_{nd} (-v)$ is a negative distance decomposition since
    \[
        (-u)^- = u^+ \le z^+ = (-z)^-
        \text{ and }
        ||-v|| = ||v|| \le ||z|| = ||-z||.
    \]
    By a similar argument, we have that if $z$ admits a negative distance decomposition then $-z$ admits a positive distance decomposition. Therefore $D^+(A) = -D^-(A)$. The remaining equalities follow immediately from the definition.
\end{proof}

Distance irreducible elements for distance reducing Markov bases are the analogue of indispensable elements for Markov bases. 

\begin{proposition}
    Suppose that $B \subseteq \ker(A)$ is distance reducing, then $D(A) \subseteq B$. If $B$ is strongly distance reducing then $D(A)^w \subseteq B$.
\end{proposition}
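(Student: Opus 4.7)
The strategy is a direct case analysis on the distance-reduction condition. Given $z \in D(A)$ (or $D^w(A)$) and a move $u \in B$ that witnesses distance reduction of $z$, I will show that this witness already forces $z = \pm u$, so that $z \in B$ up to sign. The key technical observation is that each of the four conditions in Definition~\ref{def: distance reducing} produces a positive or negative distance decomposition of $z$ in the sense of Definition~\ref{def: distance irreducible elements}.

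More concretely, I will verify the following by inspection. Suppose $u \in B$ is applicable to $z^+$ with $z^+ \ge u^+$ and $\|z - u\| < \|z\|$; then setting $v := z - u \in \ker(A)$ gives $u^+ \le z^+$ and $\|v\| < \|z\|$, i.e.\ $z = u + v$ has the shape of a positive distance decomposition, \emph{provided} $v \neq 0$. The other three sub-cases are symmetric: if $z^+ \ge u^-$ and $\|z+u\| < \|z\|$, use $z = (-u) + (z+u)$, again a positive distance decomposition since $(-u)^+ = u^- \le z^+$; if $z^- \ge u^-$ and $\|z-u\| < \|z\|$, use $z = u + (z-u)$ as a negative distance decomposition; and if $z^- \ge u^+$ and $\|z+u\| < \|z\|$, use $z = (-u) + (z+u)$ as a negative distance decomposition. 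So in every case we obtain $z = u' + v$ with $u' \in \{\pm u\}$, where either $v = 0$ (and thus $z = u' \in \pm B$) or $v \neq 0$ yields a proper positive or negative distance decomposition of $z$.

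With this observation in hand, the main statement is immediate. Fix $z \in D(A) = D^+(A) \cap D^-(A)$ and assume $B$ is distance reducing. Then there exists $u \in B$ satisfying one of the four conditions, so the observation produces a decomposition $z = u' + v$ which is either positive or negative distance; either way, membership in $D^+(A) \cap D^-(A)$ forbids $v \neq 0$, forcing $v = 0$ and $z = u' \in \pm B$. For the strong version, given $z \in D^w(A) = D^+(A) \cup D^-(A)$, split into cases. If $z \in D^+(A)$, strong distance reduction supplies some $u \in B$ applicable to $z^+$ reducing the distance, and the first two sub-cases of the observation yield a positive distance decomposition; membership in $D^+(A)$ then forces $v = 0$. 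The case $z \in D^-(A)$ is symmetric, using a move applicable to $z^-$ and the last two sub-cases of the observation.

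The argument is essentially bookkeeping; the only real subtlety is sign. A move $u$ and its negation $-u$ induce the same binomial in $I_A$ and are routinely identified in the Markov basis literature, and by Proposition~\ref{prop: dist irred is symmetric} the sets $D(A)$ and $D^w(A)$ are already closed under negation, so the inclusions $D(A) \subseteq B$ and $D^w(A) \subseteq B$ should be read modulo sign (equivalently, with $B$ assumed closed under negation). Granting that convention, no deeper obstacle remains; the one point requiring care is the unified handling of $v = 0$ versus $v \neq 0$, which is precisely the mechanism by which distance-irreducible elements are forced into $B$ rather than being decomposed within it.
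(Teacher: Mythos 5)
Your proof is correct and follows essentially the same route as the paper's: the same four-way case split on applicability, the same construction of a positive or negative distance decomposition $z = (\pm u) + v$ in each case, the same conclusion that $v=0$ is forced, and the same handling of the strong case via a witness applicable to $z^+$ and one applicable to $z^-$. The paper likewise treats membership modulo sign (concluding $\pm z = b \in B$), so your remark on that convention matches its reading of the statement.
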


\begin{proof}
    Fix a distance reducing set $B \subseteq \ker(A)$ and any $z \in D(A)$. Since $B$ is distance reducing, there exists an element $b \in B$ that reduces the distance of $z$. So we have that $b$ is applicable to $z$ and reduces its distance. So at least one of the following holds:
    \[
        {\rm (i)} \ b^+ \le z^+, \quad
        {\rm (ii)} \ b^- \le z^+, \quad
        {\rm (iii)} \ b^+ \le z^-, \quad
        {\rm (iv)} \ b^- \le z^-.
    \]
    
    \smallskip \noindent
    \textbf{Case (i)} Suppose that $b^+ \le z^+$. Since $b$ reduces the distance of $z$, we have $||z-b|| < ||z||$. If $z-b \neq 0$, then we have $z = b +_{pd} (z-b)$ is a positive distance decomposition, which contradicts $z \in D(A) \subseteq D^+(A)$. Hence $z = b \in B$.

    \smallskip \noindent
    \textbf{Case (ii)} Suppose that $b^- \le z^+$. Since $b$ reduces the distance of $z$ we have $||z + b|| < ||z||$. If $z \neq -b$, then we have that $z = (-b) +_{pd} (z+b)$ is a positive distance decomposition, which contradicts $z \in D(A) \subseteq D^+(A)$. So $-z = b \in B$.

    \smallskip \noindent
    \textbf{Cases (iii) and (iv)} These cases follow almost identically to the first two. If $z \neq b$ and $z \neq -b$ respectively then we deduce that $z \in D^-(A)$. So we have that $z \in B$ and $-z \in B$, respectively.

    Now suppose that $B \subseteq \ker(A)$ is a strongly distance reducing set and let $z \in D^w(A)$ be any element. So there exist two elements $b, b' \in B$ such that: the move $b$ is applicable to $z^+$; the move $b'$ is applicable to $z^-$; and both $b, b'$ reduce the distance of $z$. So, using the above description, we have that $b$ satisfies (i) or (ii), and $b'$ satisfies (iii) or (iv). Since $z \in D^w(A) = D^+(A) \cup D^-(A)$, we have that $z \in D^+(A)$ or $z \in D^-(A)$. If $z \in D^+(A)$, then by cases (i) and (ii) we have that $\pm z = b \in B$. Otherwise, if $z \in D^-(A)$, then by cases (iii) and (iv) we have that $\pm z = b \in B$. 
\end{proof}

\begin{proposition}
    Let $\mathcal B$ be the set of minimal distance  reducing Markov bases and $\mathcal B^s$ the set of minimal strongly distance reducing Markov bases. Then we have
    \[
    D(A) = \bigcap_{B \in \mathcal B} B
    \quad \text{and} \quad
    D^w(A) = \bigcap_{B \in \mathcal B^s} B.
    \]
\end{proposition}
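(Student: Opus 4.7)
The forward inclusion $D(A) \subseteq \bigcap_{B \in \mathcal{B}} B$ (and analogously $D^w(A) \subseteq \bigcap_{B \in \mathcal{B}^s} B$) is immediate from the previous proposition, since every $B \in \mathcal{B}$ is distance reducing and every $B \in \mathcal{B}^s$ is strongly distance reducing.

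For the reverse inclusion, the plan is to prove the contrapositive: given $z \in \ker(A) \setminus D(A)$, exhibit some $B \in \mathcal{B}$ with $z \notin B$. By Proposition~\ref{prop: dist irred is symmetric} we may assume $z \notin D^+(A)$, so there is a proper positive distance decomposition $z = u +_{pd} v$ with $u, v \in \ker(A) \setminus \{0\}$, $u^+ \le z^+$, and $||v|| < ||z||$. The key claim is that $G(A) \setminus \{z, -z\}$ is distance reducing. Granting this, since $G(A)$ is finite we extract a minimally distance reducing subset $B$ of $G(A) \setminus \{z, -z\}$; this $B$ is a Markov basis by Proposition~\ref{prop: B d-reducing => Markov basis}, lies in $\mathcal{B}$, and avoids $z$.

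To prove the key claim, by Theorem~\ref{thm: 1 norm reducing test via Graver basis} it suffices to check that $G(A) \setminus \{z, -z\}$ distance reduces every element of $G(A)$. For $y \in G(A) \setminus \{\pm z\}$ the element $y$ itself serves. It remains to distance reduce $y = \pm z$, and since a move distance reducing $z$ from its positive side automatically distance reduces $-z$ from its negative side (as $g^+ \le z^+ = (-z)^-$ and $||z - g|| = ||(-z) + g||$), I treat only $y = z$. Write $u = g_1 + \cdots + g_k$ as a conformal decomposition into Graver elements. By conformality $g_i^+ \le u^+ \le z^+$, so each $g_i$ is applicable to $z^+$. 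A direct coordinate-wise computation establishes the identity: for every $w \in \ker(A)$ with $w^+ \le z^+$,
\[
||z - w|| = ||z|| - ||w^+|| + ||w^-|| - 2 \sum_{j \in \supp(z^-) \cap \supp(w^-)} \min(|z_j|, |w_j|).
\]
Applying this with $w = u$ and $||z - u|| = ||v|| < ||z||$ yields $||u^-|| - ||u^+|| < 2 S_u$, where $S_u$ denotes the sum on the right. Conformality gives $||u^\pm|| = \sum_i ||g_i^\pm||$, and a short case analysis on whether $\sum_i (g_i^-)_j \le |z_j|$ yields the pointwise inequality $\min(|z_j|, \sum_i (g_i^-)_j) \le \sum_i \min(|z_j|, (g_i^-)_j)$, hence $S_u \le \sum_i S_{g_i}$. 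Combining these, $\sum_i \bigl((||g_i^-|| - ||g_i^+||) - 2 S_{g_i}\bigr) < 0$; by the identity applied to each $g_i$, this is $\sum_i (||z - g_i|| - ||z||) < 0$, so some $g_i$ satisfies $||z - g_i|| < ||z||$ and thus distance reduces $z$. Finally no $g_i$ equals $\pm z$: if $g_i = z$ were a conformal summand of $u$, then $u^+ \ge z^+$, which combined with $u^+ \le z^+$ forces the remaining summands to vanish, giving $u = z$ and $v = 0$, contradicting properness; the case $g_i = -z$ is excluded similarly, as it would force $z^- = 0$.

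The statement for $D^w(A) = \bigcap_{B \in \mathcal{B}^s} B$ follows by running the same argument on both sides: $z \notin D^w(A)$ gives proper positive \emph{and} negative distance decompositions, so the Graver argument above (applied on each side) produces elements of $G(A) \setminus \{\pm z\}$ distance reducing $z$ from the positive and negative sides, showing $G(A) \setminus \{\pm z\}$ is strongly distance reducing (note that for $y \in G(A) \setminus \{\pm z\}$, both $y$ and $-y$ lie in this set, giving strong reduction of $y$). The main technical obstacle is the coordinate-level inequality $S_u \le \sum_i S_{g_i}$ together with the careful verification that no conformal Graver summand of $u$ coincides with $\pm z$; both are essentially case analyses, but the latter is what actually produces a distance reducing set provably omitting $z$.
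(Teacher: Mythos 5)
Your proof is correct, and its skeleton---prove the reverse inclusion by exhibiting, for each $z \notin D(A)$, a minimal distance reducing set avoiding $z$, obtained by deleting $\{z,-z\}$ and extracting a minimal distance reducing subset (a Markov basis by Proposition~\ref{prop: B d-reducing => Markov basis})---is exactly the paper's. Where you diverge is in the set you puncture and, consequently, in the work needed to show the punctured set is still distance reducing. The paper deletes $\{m,-m\}$ from all of $\ker(A)$ and observes that the summand $u$ of a positive (or negative) distance decomposition $m = u +_{pd} v$ already reduces the distance of $m$ and cannot equal $\pm m$ (properness rules out $u = m$, and $u = -m$ would force $\|v\| = 2\|m\| \not< \|m\|$), so the claim is essentially a one-liner. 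You instead delete $\{z,-z\}$ from $G(A)$, which forces you to locate a reducing move \emph{inside} the Graver basis: you conformally decompose $u$ into primitive elements and run the identity-plus-subadditivity argument to show that some summand $g_i \neq \pm z$ already reduces $z$. I checked your identity for $\|z-w\|$ when $w^+ \le z^+$, the inequality $\min(a,\sum_i b_i) \le \sum_i \min(a,b_i)$ giving $S_u \le \sum_i S_{g_i}$, and the exclusion of $g_i = \pm z$; all are sound, as is the two-sided version for $D^w(A)$. Your heavier route buys two things the paper glosses over: since $G(A)\setminus\{z,-z\}$ is finite, the existence of a minimal distance reducing subset is immediate (the paper extracts one from the infinite set $\ker(A)\setminus\{m,-m\}$ without justification, though this can be repaired via Theorem~\ref{thm: 1 norm reducing test via Graver basis}), and you obtain the stronger fact that every $z \notin D(A)$ is distance reduced by a primitive element distinct from $\pm z$.
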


\begin{proof}
    We begin by showing $D(A) = \bigcap_{B \in \mathcal B} B$. For each $B \in \mathcal B$ we have that $D(A) \subseteq B$ so $D(A) \subseteq \bigcap_{B \in \mathcal B} B$. For the opposite inclusion, it suffices to show that for each $m \in \bigcup_{B \in \mathcal B} B$ such that $m \notin D(A)$, there exists $B \in \mathcal B$ such that $m \notin B$. To do this, consider the set $\ker(A) \backslash \{m, -m\}$. This set is distance reducing because the only elements it does not contain are  $m$ and $-m$, both of which admit either a positive or negative distance reducing decompositions. Let $B$ be a minimal distance reducing subset of $\ker(A) \backslash \{m, -m\}$. We have that $B \in \mathcal B$ and $m \notin B$, hence we have shown the opposite inclusion and we are done.

    The proof that $D^w(A) = \bigcap_{B \in \mathcal B^s} B$ follows similarly. Note that for each $B \in \mathcal B^s$ we have that $D^w(A) \subseteq B$ so $D^w(A) \subseteq \bigcap_{B \in \mathcal B^s} B$. For the opposite inclusion, it suffices to show that for each $m \in \bigcup_{B \in \mathcal B^s} B$ such that $m \notin D^w(A)$, there exists $B \in \mathcal B^s$ such that $m \notin B$. To do this, consider the set $\ker(A) \backslash \{m, -m\}$. This set is strongly distance reducing because the only elements it does not contain are  $m$ and $-m$, both of which admit positive and negative distance reducing decompositions. Let $B$ be a minimal distance reducing subset of $\ker(A) \backslash \{m, -m\}$. We have that $B \in \mathcal B^s$ and $m \notin B$, hence we have shown the opposite inclusion. This concludes the proof.
\end{proof}

\begin{proposition}\label{prop: dist irred chain inclusions}
    The following chain of inclusions holds:
    \[
    S(A) \subseteq D(A) \subseteq D^w(A) \subseteq G(A).
    \]
\end{proposition}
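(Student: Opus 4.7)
The plan is to take the three inclusions one at a time; each is a direct comparison of the decomposition types introduced in Definition~\ref{def: distance irreducible elements}, so none is deep. The middle inclusion $D(A) \subseteq D^w(A)$ is immediate from set theory, since $D^+(A)\cap D^-(A) \subseteq D^+(A)\cup D^-(A)$.

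For $S(A) \subseteq D(A)$, I will argue the contrapositive in two parallel cases. Suppose $z$ admits a proper positive distance decomposition $z = u +_{pd} v$, so that $u,v \neq 0$ and $u^+ \le z^+$ componentwise. For any coordinate $i$ with $(u)_i > 0$, the inequality $u^+\le z^+$ forces $(z)_i \ge (u)_i > 0$, hence $(v)_i = (z)_i - (u)_i \ge 0$. This is exactly the defining condition of a proper semi-conformal decomposition ($u_i > 0 \Rightarrow v_i \ge 0$), so $z \notin S(A)$. The negative-distance case $z = u +_{nd} v$ is symmetric: from $u^- \le z^-$ one obtains $\supp(u^-)\cap\supp(v^+) = \emptyset$, and then the same decomposition written as $z = v + u$ is a proper semi-conformal decomposition. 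Hence $S(A) \subseteq D^+(A) \cap D^-(A) = D(A)$.

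For $D^w(A) \subseteq G(A)$, I again argue the contrapositive. Suppose $z$ admits a proper conformal decomposition $z = u +_c v$ with $u,v\neq 0$. A short check shows that the support conditions from Definition~\ref{def: distance irreducible elements} upgrade to the componentwise equalities $z^+ = u^+ + v^+$ and $z^- = u^- + v^-$: on $\supp(z^+)$ neither $u$ nor $v$ has a negative entry (else the second support condition would force that index into $\supp(z^-)$, a contradiction), so both restrict to their positive parts, and symmetrically on $\supp(z^-)$. Consequently $u^+ \le z^+$, $u^- \le z^-$, and $\|z\| = \|u\| + \|v\|$. Since $v \neq 0$, $\|v\| < \|z\|$, so the single decomposition $z = u + v$ is simultaneously a proper positive distance decomposition and a proper negative distance decomposition. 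Thus $z \notin D^+(A) \cup D^-(A) = D^w(A)$.

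There is no real obstacle in this argument; the main thing to watch is that the semi-conformal condition is not symmetric in its two summands, which is why in the negative-distance case one must swap the roles of $u$ and $v$ before recognising the resulting decomposition as semi-conformal.
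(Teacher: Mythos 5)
Your proposal is correct and follows essentially the same route as the paper: both the middle inclusion by pure set theory, and the outer two inclusions by contrapositive comparisons showing that a positive (resp.\ negative) distance decomposition is a proper semi-conformal decomposition $z = u +_{sc} v$ (resp.\ $z = v +_{sc} u$), and that a proper conformal decomposition yields both a positive and a negative distance decomposition. One trivial slip: in the last step the inequality $\|v\| < \|z\| = \|u\| + \|v\|$ follows from $u \neq 0$, not $v \neq 0$ as you wrote, though both summands are nonzero in a proper decomposition so nothing breaks.
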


\begin{proof}
    Throughout the proof, for a set $B \subseteq \ker(A)$, we write $\overline B := \ker(A) \backslash B$ for its complement in $\ker(A)$. We first show $D^w(A) \subseteq G(A)$ by proving that $\overline{G(A)} \subseteq \overline{D^w(A)}$. Suppose that $z \in \ker(A)$ does not belong to $G(A)$. Then we have that $z = u +_c v$ admits a proper conformal decomposition, hence $v^+ \le z^+$ and $v^- \le z^-$. In addition, we have that $u \neq 0$ and so it follows that $||v|| < ||z||$. By the conformal decomposition, we have $u^+ \le z^+$ and $u^- \le z^-$, which gives us $z = u +_{pd} v$ and $z = u +_{nd} v$ respectively. Therefore $z \in \overline{D^+(A)} \cap \overline{D^-(A)} = \overline{D^w(A)}$ and we have shown $D^w(A) \subseteq G(A)$.

    \smallskip
    Note that the inclusion $D(A) \subseteq D^w(A)$ follows immediately from the definition. So it remains to show that $S(A) \subseteq D(A)$. To do this, we show that $\overline{D(A)} \subseteq \overline{S(A)}$. Suppose that $z \in \overline{D^+(A)}$, then $z$ admits a positive distance decomposition $z = u +_{pd} v$. In particular, we have $u^+ \le z^+$. Assume, by contradiction, that $z = u + v$ is not a proper semi-conformal decomposition. Then there exists an index $i \in [n]$ such that $u_i > 0$ and $v_i < 0$. But, this gives us $z_i = u_i + v_i < u_i$, hence $u^+ \nleq z^+$, a contradiction. So $z = u +_{sc} v$ is a semi-conformal decomposition, hence $\overline{D^+(A)} \subseteq \overline{S(A)}$. Similarly, if $z \in \overline{D^-(A)}$ then there exists a negative distance decomposition $z = u +_{nd} v$. In particular, we have $u^- \le z^-$. Assume by contradiction that $z = v+u$ is not a semi-conformal decomposition. Then, as above, there exists an index $i \in [n]$ such that $u_i < 0$ and $v_i > 0$. But, we have $-z_i = -u_i -v_i < -u_i$, hence $u^- \nleq z^-$, a contradiction. So $z = v +_{sc} u$ is a semi-conformal decomposition, hence $\overline{D^-(A)} \subseteq \overline{S(A)}$. So we have that $\overline{D(A)} = \overline{D^+(A)} \cup \overline{D^-(A)} \subseteq \overline{S(A)}$, which concludes the proof.
\end{proof}

Note that for each minimal Markov basis $M$ we have the following chain of inclusions:
\[
    S(A) \subseteq M \subseteq M(A) \subseteq G(A)
\]
where $M(A)$ is the universal Markov basis. The following examples show that there is little connection between $D^w(A)$ and $M(A)$ or between $D(A)$ and $M(A)$.

\begin{example}\label{example: A = 2_3_4 continued}
    Consider the matrix $A = \begin{pmatrix}
        2 & 3 & 4 
    \end{pmatrix}$. In this case we have
    \[
    D^+(A) = \begin{bmatrix}
        2&0&-1\\
        1&-2&1\\
        -2&0&1
    \end{bmatrix}
    \text{ and }
    D^-(A) = 
    \begin{bmatrix}
        2&0&-1\\
        -2&0&1\\
        -1&2&-1
    \end{bmatrix}.
    \]
    So we have
    \[
    D(A) = \pm 
    \begin{bmatrix}
        2&0&-1
    \end{bmatrix}
    \text{ and }
    D^w(A) = D(A) \cup \pm 
    \begin{bmatrix}
        1&-2&1
    \end{bmatrix}.
    \]
    On the other hand there are two minimal Markov bases
    \[
    M_1 = \begin{bmatrix}
        2&0&-1\\
        1&-2&1
    \end{bmatrix}
    \text{ and }
    M_2 = \begin{bmatrix}
        2&0&-1\\
        3&-2&0
    \end{bmatrix}.
    \]
    So in this example we have $D^w(A) \subsetneq M(A)$
\end{example}

\begin{example}
    Consider the matrix 
    $A = \begin{pmatrix}
        8 & 14 & 15 & 20
    \end{pmatrix}$.
    We have
    \[
    D^+(A) = 
    \begin{bsmallmatrix}
    5&0&0&-2\\
    2&1&-2&0\\
    0&0&4&-3\\
    1&-2&0&1\\
    -5&0&0&2\\
    -2&-1&2&0\\
    -1&2&0&-1
    \end{bsmallmatrix}
    \text{ and }
    D^-(A) = 
    \begin{bsmallmatrix}
    5&0&0&-2\\
    2&1&-2&0\\
    1&-2&0&1\\
    -5&0&0&2\\
    -2&-1&2&0\\
    0&0&-4&3\\
    -1&2&0&-1
    \end{bsmallmatrix}.
    \]
    Hence
    \[
    D(A) = \pm
    \begin{bmatrix}
        5&0&0&-2\\
        2&1&-2&0\\
        1&-2&0&1
    \end{bmatrix}
    \text{ and }
    D^w(A) = D(A) \cup \pm 
    \begin{bmatrix}
        0&0&4&-3
    \end{bmatrix}.
    \]
    Note that $A$ is a complete intersection with gluing type $(((8 \circ 20) \circ 14) \circ 15)$. We have that $A$ has a unique minimal Markov basis so
    \[
    S(A) = M(A) = \pm \begin{bmatrix}
        5 & 0 & 0 & -2 \\
        1 & -2 & 0 & 1 \\
        2 & 1 & -2 & 0
    \end{bmatrix}
    = D(A) \subsetneq D^w(A)
    \]
    Let us consider the submatrix $A' = \begin{pmatrix}
        8&14&20
    \end{pmatrix}$.
    We have \[
    D^-(A') = D^+(A') = \pm \begin{bmatrix}
        1&-2&1\\
        5&0&-2\\
    \end{bmatrix}.
    \]
    Moreover, we have that $A'$ has a unique minimal Markov basis that coincides with the above. So $S(A) = M(A) = D(A) = D^w(A)$.
    
\end{example}

\begin{example}\label{example: A = 8_31_33_53}
    Let $A = \begin{pmatrix}
        8 & 31 & 33 & 53 
    \end{pmatrix}$. Its minimal Markov bases are
    \[
    M_1 = \begin{bsmallmatrix}
        2&-2&3&-1\\
        3&2&-1&-1\\
        5&-3&0&1\\
        5&0&2&-2\\
        6&1&-4&1\\
        8&-1&-1&0
    \end{bsmallmatrix}
    \text{ and }
    M_2 = \begin{bsmallmatrix}
        1&4&-4&0\\
        2&-2&3&-1\\
        3&2&-1&-1\\
        5&-3&0&1\\
        5&0&2&-2\\
        8&-1&-1&0
    \end{bsmallmatrix}.
    \]
    The distance irreducible elements are given by
    \[
    D(A) = D^w(A) = \pm \begin{bsmallmatrix}
        8&-1&-1&0\\
        5&0&2&-2\\
        3&2&-1&-1\\
        2&-2&3&-1\\
        5&-3&0&1\\
        1&4&-4&0\\
        3&-1&-3&2\\
        0&3&2&-3
    \end{bsmallmatrix}.
    \]
    In particular we see that $(3, -1, -3, 2) \in D(A) \backslash M(A)$.
\end{example}

To show that $D^w(A) \subseteq G(A)$, we use the characterisation of the Graver basis $G(A)$ in terms of conformal decompositions. It may be tempting to use the characterisation of the universal Markov basis $M(A)$ in terms of \textit{strongly semi-conformal decompositions} to investigate its connections with $D(A)$ and $D^w(A)$. However, the following example shows that such comparisons are not always possible.

\begin{example}\label{example: A=3_5_8_11}
    Consider the complete intersection $A =  \begin{pmatrix} 3 & 5 & 8 & 11 \end{pmatrix}$, which has two distinct minimal Markov bases:
    \[
        M_1 = \begin{bmatrix}
            1&1&-1&0\\
            2&1&0&-1\\
            5&-3&0&0
            \end{bmatrix}
        \text{ and }
        M_2 = \begin{bmatrix}
            1&1&-1&0\\
            1&0&1&-1\\
            5&-3&0&0
            \end{bmatrix}.
    \]
    In this case we have
    \[
    D^+(A) = \begin{bsmallmatrix}
        1&1&-1&0\\
        5&-3&0&0\\
        3&-4&0&1\\
        1&0&1&-1\\
        1&-5&0&2\\
        -1&-1&1&0\\
        -5&3&0&0\\
        -1&0&-1&1
    \end{bsmallmatrix} \text{ and }
    D^-(A) = \begin{bsmallmatrix}
        1&1&-1&0\\
        5&-3&0&0\\
        1&0&1&-1\\
        -1&-1&1&0\\
        -5&3&0&0\\
        -3&4&0&-1\\
        -1&0&-1&1\\
        -1&5&0&-2
    \end{bsmallmatrix}.
    \]
    And so 
    \[
    D(A) = \pm
    \begin{bmatrix}
        1 & 1 & -1 & 0 \\
        5 & -3 & 0 & 0 \\
        1 & 0 & 1 & -1 
    \end{bmatrix} \text{ and }
    D^w(A) = D(A) \cup \pm \begin{bmatrix}
        3 & -4 & 0 & 1 \\
        1 & -5 & 0 & 2
    \end{bmatrix}.
    \]
    We observe that the sets $D^w(A)$ and $M(A)$ are incomparable with respect to inclusion. In particular we have $(3, -4, 0, 2)$ and $(1, -5, 0, 2)$ lie in $D^w(A) \backslash M(A)$.

    A \textit{strongly semi-conformal decomposition} for $(3, -4, 0, 2)$ is given by
    \[
    (3, -4, 0, 1) 
    = (-1, 0, -1, 1) + (4, -4, 1, 0) 
    = (-1, 0, -1, 1) + (-1, -1, 1, 0) + (5, -3, 0, 0).
    \]
    Note that neither of these strongly semi-conformal decompositions give rise to a positive distance decomposition.
\end{example}

We now consider the properties of elements of $\ker(A)$ that cannot be distance reduced by elements of $D(A)$.

\begin{proposition}
    Let $I(A) \subseteq G(A)$ be the set of moves that cannot be distance reduced by $D(A)$. If $z \in \ker(A) \backslash G(A)$ cannot be distance reduced by any element of $D(A)$, then for any conformal decomposition $z = g_1 + \dots + g_k$ where $g_i \in G(A)$ for each $i \in [k]$, then we have that $g_i \in I(A)$ for each $i \in [k]$.
\end{proposition}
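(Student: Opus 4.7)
The plan is to prove the contrapositive: if some $g_i$ in a conformal decomposition $z = g_1 + \cdots + g_k$ can be distance reduced by an element of $D(A)$, then so can $z$. This would directly contradict the hypothesis and establish the proposition.

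First I would fix some $i \in [k]$ and $b \in D(A)$ that distance reduces $g_i$. The key preparatory step is to show that the two-term decomposition $z = g_i + y$, where $y := \sum_{j \neq i} g_j$, is itself a conformal decomposition. This follows from the conformality of the multi-term decomposition: by definition one has $z^+ = \sum_{j} g_j^+$ and $z^- = \sum_{j} g_j^-$, which means that at every coordinate $\ell$ the values $(g_j)_\ell$ all share a common sign (or are zero). Hence no cancellation occurs when forming partial sums, so $y^+ = \sum_{j \neq i} g_j^+$ and $y^- = \sum_{j \neq i} g_j^-$, giving $z^+ = g_i^+ + y^+$ and $z^- = g_i^- + y^-$.

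With the conformal decomposition $z = g_i + y$ in hand, I would apply Lemma~\ref{lem: 1-norm reduction and conformal decomposition} with the move $b$ and the element $x = g_i$ to conclude that $b$ distance reduces $z$. Since $b \in D(A)$, this contradicts the hypothesis that no element of $D(A)$ distance reduces $z$. Hence every $g_i$ must belong to $I(A)$, as desired.

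There is no substantive obstacle here: the whole argument is a direct contrapositive application of Lemma~\ref{lem: 1-norm reduction and conformal decomposition}. The only care needed is the bookkeeping step of passing from the multi-term conformal decomposition to the two-term conformal decomposition $z = g_i + y$, and this is essentially immediate from the definition.
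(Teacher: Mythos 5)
Your proposal is correct and is essentially the paper's own argument: the paper also proves the claim by observing that if some summand of a conformal decomposition is distance reduced by an element of $D(A)$, then Lemma~\ref{lem: 1-norm reduction and conformal decomposition} forces $z$ itself to be distance reduced, contradicting the hypothesis. Your added bookkeeping step, checking that the regrouped two-term decomposition $z = g_i + y$ is still conformal, is a correct and worthwhile detail that the paper leaves implicit.
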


\begin{proof}
    Follows directly from \Cref{lem: 1-norm reduction and conformal decomposition}, i.e., if there exists a conformal decomposition $z = g + h$ where $g \in I(A)$ then, by the lemma, we have that $z$ is distance reduced by $D(A)$. 
\end{proof}

Analogously to the universal Markov basis $M(A)$, we define the universal distance-reducing Markov basis.

\begin{definition}
    The \textit{universal distance-reducing Markov basis} $\mathcal D(A)$ is the union of all minimal distance reducing Markov bases. The \textit{universal strongly distance-reducing Markov basis} $\mathcal D^s(A)$ is the union of all minimal strongly distance reducing Markov bases.
\end{definition}

From the definition, we have $D(A) \subseteq \mathcal D(A)$ and $D^w(A) \subseteq \mathcal D^s(A)$.

\begin{proposition}\label{prop: unique min Markov dist irred sets}
    Suppose that $A$ has a unique minimal Markov basis $M$. If $M$ is distance reducing, then we have
    \[
    S(A) = D(A) = M(A) = \mathcal D(A).
    \]
\end{proposition}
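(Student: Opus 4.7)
The plan is to combine several results from Section~\ref{sec: dist irred elements} to collapse all four sets to the unique minimal Markov basis $M$. First, I would observe that uniqueness of $M$ immediately gives $S(A) = M(A) = M$ (up to signs of individual generators): by definition $M(A)$ is the union of minimal Markov bases, which is $M$ by hypothesis; and every Markov basis contains a minimal Markov basis as a subset (obtained by iteratively removing redundant generators), so the intersection $S(A)$ also coincides with $M$.

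For $D(A) = M$, I would combine two ingredients. The chain of inclusions in Proposition~\ref{prop: dist irred chain inclusions} gives $M = S(A) \subseteq D(A)$. In the opposite direction, the earlier proposition stating that any distance reducing set $B \subseteq \ker(A)$ satisfies $D(A) \subseteq B$, applied with $B = M$ (which is distance reducing by hypothesis), yields $D(A) \subseteq M$. Combining the two inclusions gives $D(A) = M$.

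Finally, for $\mathcal D(A) = M$, let $B$ be any minimally distance reducing Markov basis, i.e., an arbitrary set in the union defining $\mathcal D(A)$. Since $B$ is distance reducing, Proposition~\ref{prop: B d-reducing => Markov basis} implies that $B$ is a Markov basis, and therefore $M = S(A) \subseteq B$. But $M$ is itself distance reducing, so by minimality of $B$ with respect to the distance reducing property, the inclusion $M \subseteq B$ cannot be strict; that is, $B = M$. Hence $M$ is the only minimally distance reducing Markov basis, and $\mathcal D(A) = M$.

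I do not anticipate any serious obstacle: every step is a direct application of a result already established in the paper. The only point requiring attention is the sign convention (since $z$ and $-z$ define the same binomial in $I_A$, sets such as $S(A)$ and $M(A)$ are naturally treated modulo sign), but this does not affect the overall structure of the argument. The cleanest write-up proceeds in the three short steps above, in the order $S(A) = M(A) = M$, then $D(A) = M$, then $\mathcal D(A) = M$.
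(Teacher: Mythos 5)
Your proposal is correct and follows essentially the same route as the paper's proof: both reduce everything to the chain $S(A) \subseteq D(A)$ together with the fact that $D(A)$ is contained in every (minimally) distance reducing Markov basis, and both use the minimality argument to conclude that $M = S(A)$ is the unique minimally distance reducing Markov basis, whence $\mathcal D(A) = M$. The only cosmetic difference is that you establish $D(A) \subseteq M$ directly from the containment $D(A) \subseteq B$ for distance reducing $B$, whereas the paper sandwiches $D(A)$ between $S(A)$ and $\mathcal D(A)$; the content is identical.
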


\begin{proof}
    Since $A$ has a unique minimal Markov basis it follows that $S(A) = M(A) \subseteq D(A) \subseteq \mathcal D(A)$. Suppose that $B$ is any minimally distance reducing Markov basis. Then it follows that $S(A) \subseteq D(A) \subseteq B$. By assumption $S(A) = M(A)$ is a minimal Markov basis that is distance reducing, hence any element of $B \backslash S(A)$ is redundant for the purpose of distance reduction. Hence $B = S(A)$. We have shown that there is a unique minimally distance reducing Markov basis, hence $\mathcal D(A) = S(A)$ and we are done.
\end{proof}

In the next example, we show how to compute the universal distance-reducing Markov basis.

\begin{example}
    Consider the matrix $A = \begin{pmatrix}
        3 & 5 & 11
    \end{pmatrix}$ from Example~\ref{example: 3-dim CI: unique MB does not imply reducing}. We compute the universal distance reducing Markov basis as follows. First, we have that
    \[
    D(A) = M(A) = \pm \begin{bmatrix}
        2 & 1 & -1 \\
        5 & -3 & 0 
    \end{bmatrix}
    \text{ and } 
    D^w(A) = D(A) \cup \pm \begin{bmatrix}
        3 & -4 & 1 \\
        1 & -5 & 2
    \end{bmatrix}.
    \]
    So, every distance reducing Markov basis contains $D(A)$. By Theorem~\ref{thm: 1 norm reducing test via Graver basis} we have that $D(A)$ is not distance reducing because it fails to distance reduce $(1, -5, 2)$ and $(0, 11, -5)$.

    The only elements of $\ker(A)$ that distance reduce $(1, -5, 2)$ are $(1, -5, 2)$ and $(3, -4, 1)$. Each of these elements distance reduce $(0, 11, -5)$. So, by Theorem~\ref{thm: 1 norm reducing test via Graver basis}, the minimally distance reducing Markov bases are 
    \[
    \begin{bmatrix}
        2 & 1 & -1 \\
        5 & -3 & 0 \\
        1 & -5 & 2
    \end{bmatrix}
    \text{ and }
    \begin{bmatrix}
        2 & 1 & -1 \\
        5 & -3 & 0 \\
        3 & -4 & 1
    \end{bmatrix},
    \text{ hence }
    \mathcal D(A) = D^w(A) = \begin{bmatrix}
        2 & 1 & -1 \\
        5 & -3 & 0 \\
        1 & -5 & 2 \\
        3 & -4 & 1
    \end{bmatrix} \subseteq G(A).
    \]
\end{example}

The following example shows that the universal distance reducing Markov basis need not lie inside the Graver basis. 

\begin{example}\label{example: A = 3_5_8_11 continued 2}
    Consider the matrix $A = \begin{pmatrix}
        3 & 5 & 8 & 11
    \end{pmatrix}$
    from Example~\ref{example: A=3_5_8_11}. The elements of the Graver basis that cannot by distance reduced by $D(A)$ are given by the set $I = \{(1, -5, 0, 2), (0, 11, 0, -5)\}$. To extend $D(A)$ to a minimally distance reducing Markov basis, we must add either one or two elements to distance reduce $I$.

    The only moves in $\ker(A)$ that simultaneously distance reduce both elements of $I$ are
    \[
    T_0 = \pm \begin{bmatrix}
        3 & -4 & 0 & 1 \\
        2 & -5 & 1 & 1 \\
        1 & -5 & 0 & 2
    \end{bmatrix} \subseteq G(A).
    \]
    So, there are three minimally distance reducing Markov bases given by $D(A)$ together with one element from $T_0$. 
    
    The remaining minimally distance reducing Markov bases are those that consist of $D(A)$ together with a pair of elements $z_1$ and $z_2$ such that: $z_1$ reduces the distance of $(1, -5, 0, 2)$ and does not reduce the distance of $(0, 11, 0, -5)$; and $z_2$ reduces the distance of $(0, 11, 0, -5)$ and does not reduce the distance of $(1, -5, 0, 2)$. Let $T_1$ and $T_2$ denote the set of such $z_1$ and $z_2$ respectively. These sets are given by
    \[
    T_1 = \begin{bmatrix}
        4 & -4 & 1 & 0 \\
        2 & -5 & 2 & 0 
    \end{bmatrix} 
    \text{ and }
    T_2 = \begin{bsmallmatrix}
       0&6&-1&-2\\
       1&-6&2&1\\
       2&-6&3&0\\
       1&6&0&-3\\
       0&5&1&-3\\
       0&7&-3&-1\\
       1&-7&4&0\\
       0&8&-5&0\\
       \end{bsmallmatrix}
       \cup
       \begin{bsmallmatrix}
       0&11&0&-5\\
       0&4&3&-4\\
       0&3&5&-5\\
       1&5&2&-4\\
       2&6&1&-4\\
       1&4&4&-5\\
       3&7&0&-4\\
       2&5&3&-5\\
       \end{bsmallmatrix}
       \cup
       \begin{bsmallmatrix}
       2&-10&0&4\\
       4&-9&0&3\\
       3&6&2&-5\\
       6&-8&0&2\\
       1&-11&1&4\\
       3&-10&1&3\\
       5&-9&1&2\\
       4&7&1&-5\\
       7&-8&1&1\\
    \end{bsmallmatrix}
    \cup
    \begin{bsmallmatrix}
       2&-11&2&3\\
       4&-10&2&2\\
       6&-9&2&1\\
       5&8&0&-5\\
       3&-11&3&2\\
       5&-10&3&1\\
       4&-11&4&1\\
       5&-11&5&0\\
       11&-11&0&2
    \end{bsmallmatrix}.
    \]
    So, there are sixty eight minimally distance reducing Markov bases of this form. In $T_2$, the elements: $(1, 5, 2, -4)$, $(2, 6, 1, -4)$, \dots lie outside of the Graver basis. So $\mathcal D(A) \not\subseteq G(A)$.
\end{example}

\begin{remark}\label{rmk: dist irred poset}
    The subsets $G(A)$, $M(A)$, $S(A)$, $D(A)$, $D^w(A)$, $\mathcal D(A)$, $\mathcal D^s(A)$ and Markov basis $M$ for $A$, are naturally ordered by inclusion into a poset any matrix $A$. The Hasse diagram for this poset is shown in Figure~\ref{fig: irreducible elements inclusion poset}. The dotted lines indicate pairs of sets that are incomparable for certain matrices $A$.
    
    Note that some of the inclusions are not clear. For instance, to prove that $\mathcal D(A) \subseteq \mathcal D^s(A)$, it suffices to show that any minimal distance reducing set can be extended to a minimal strongly distance reducing set. But this is not immediate. If $B$ is minimal distance reducing, then it may be impossible to extend it to a minimal strongly distance reducing Markov basis. There may exist an element $b \in B$ such that for every strongly distance reducing set $\overline B \supseteq B$ containing $B$, we may have $\overline B \backslash \{b\}$ is strongly distance reducing. In such a case, we cannot conclude that $b \in \mathcal D^s(A)$.
    
    \begin{figure}[t]
        \centering
        \includegraphics[scale = 0.8]{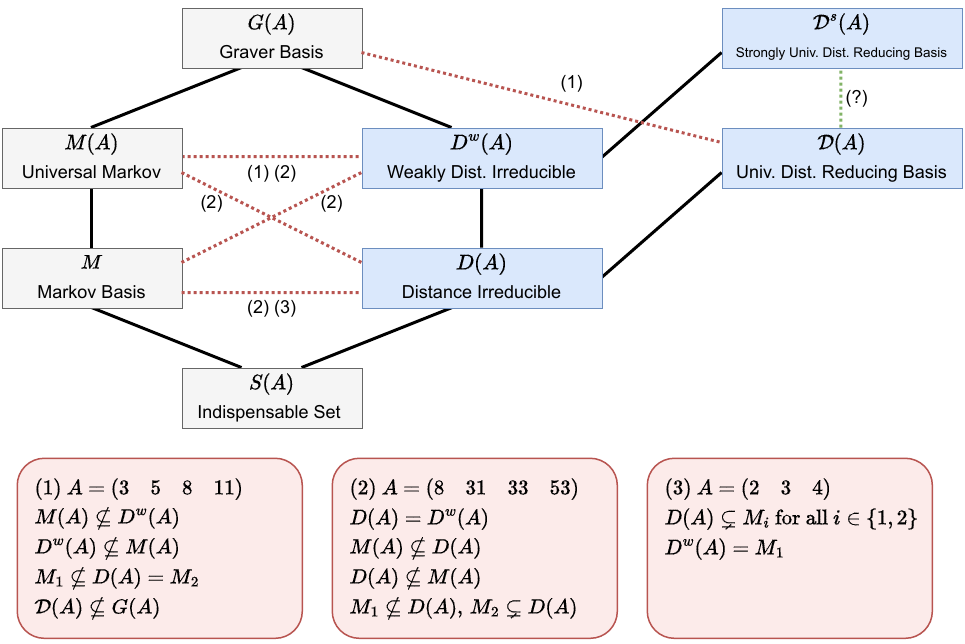}
        \caption{The Hasse diagram for a poset of subsets of $\ker(A)$. The dotted lines known incomparable elements in the poset. In particular, the examples are explained in: 
        (1) Examples~\ref{example: A=3_5_8_11} and \ref{example: A = 3_5_8_11 continued 2}, 
        (2) Example~\ref{example: A = 8_31_33_53}, 
        and (3) Example~\ref{example: A = 2_3_4 continued}.}
        \label{fig: irreducible elements inclusion poset}
    \end{figure}
\end{remark}

\begin{corollary}\label{cor: universal dist red bases are finite}
    The sets $\mathcal D(A)$ and $\mathcal D^s(A)$ are finite.
\end{corollary}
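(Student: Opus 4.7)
The plan is to reduce the problem, via Theorem~\ref{thm: 1 norm reducing test via Graver basis}, to counting moves that can (strongly) reduce the distance of a Graver basis element, and then to exhibit a finite uniform bound on the coordinates of such moves. For each $g \in G(A)$, define
\[
R(g) := \{ u \in \ker(A) : u \text{ reduces the distance of } g \}.
\]
I would first show that $R(g)$ is finite. If $u \in R(g)$, then $u$ is applicable to $g^+$ or to $g^-$, so either $u^+$ or $u^-$ is coordinate-wise bounded by $g^\pm$, giving $\min(\|u^+\|, \|u^-\|) \le \|g\|$. Moreover, distance reduction gives $\|g \pm u\| < \|g\|$, and the triangle inequality then yields $\|u\| \le \|g \pm u\| + \|g\| < 2\|g\|$. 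Hence $\|u^+\|, \|u^-\| < 2\|g\|$, and so every coordinate of $u$ lies in $[-2\|g\|, 2\|g\|]$. Since only finitely many integer vectors satisfy such bounds, $R(g)$ is finite.

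Next I would show that every minimally distance reducing set $B$ is contained in $\bigcup_{g \in G(A)} R(g)$. For any $b \in B$, minimality ensures $B \setminus \{b\}$ is not distance reducing; by Theorem~\ref{thm: 1 norm reducing test via Graver basis}, there exists $g \in G(A)$ that is not distance reduced by $B \setminus \{b\}$. Since $B$ does distance reduce $g$, the element $b$ itself must distance reduce $g$, so $b \in R(g)$. Taking the union over all minimally distance reducing $B$ yields
\[
\mathcal D(A) \;\subseteq\; \bigcup_{g \in G(A)} R(g),
\]
and the right-hand side is finite because $G(A)$ is finite (as the Graver basis of $A$) and each $R(g)$ is finite. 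This proves that $\mathcal D(A)$ is finite.

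For $\mathcal D^s(A)$ the same argument applies verbatim, using the strong half of Theorem~\ref{thm: 1 norm reducing test via Graver basis}. If $B$ is minimally strongly distance reducing and $b \in B$, then $B \setminus \{b\}$ fails to strongly reduce the distance of some $g \in G(A)$, meaning $b$ is the only element of $B$ providing the move applicable to $g^+$ (or $g^-$) that reduces the distance. Either way $b \in R(g)$, so $\mathcal D^s(A) \subseteq \bigcup_{g \in G(A)} R(g)$ is finite. The one point requiring care, and the only possible obstacle, is confirming that the conjunction of applicability and $1$-norm reduction truly confines $R(g)$ to a bounded box in $\ZZ^n$; this is exactly the two-step bound on $\|u^+\|$ and $\|u^-\|$ described above.
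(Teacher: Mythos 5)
Your proof is correct and follows essentially the same route as the paper: both arguments observe that any element of a minimal (strongly) distance reducing set must reduce the distance of some Graver basis element, and that the triangle inequality bounds the norm of any such element by twice the norm of the Graver element it reduces. The paper phrases this as a single uniform bound $N = 2\max\{\|g\| : g \in G(A)\}$ rather than via the sets $R(g)$, but the content is identical.
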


\begin{proof}
    We show that there exists $N \in \NN$ such that $\mathcal D(A), \mathcal D^s(A) \subseteq \{z \in \ker(A) \colon ||z|| \le N\}$. Let $G$ be the Graver basis of $A$ and define $\max(G) := \max\{||g|| \colon g \in G \}$. Let $N = 2\max(G)$. For any $z \in \ker(A)$ with $||z|| > N$ and $g \in G$, we have 
    \[
    \min \{||g+z||, ||g-z||\} \ge \big\lvert ||g|| - ||z||\big\rvert > |2\max(G) - \max(G)| = \max(G) \ge ||g||.
    \]
    Therefore $z$ does not distance reduce any element of the Graver basis. So, by Theorem~\ref{thm: 1 norm reducing test via Graver basis}, we have that $z$ does not belong to $\mathcal D(A)$ or $\mathcal D^s(A)$. Hence $\mathcal D(A)$ and $\mathcal D^s(A)$ are finite.
\end{proof}

\section{Discussion} \label{sec: discussion}
For monomial curves in $\mathbb A^3$ and $\mathbb A^4$, and complete intersections we have seen that the distance reducing property is characterised by the circuits. In particular, the characterisation of the distance reducing property by circuits is preserved by gluing, see Sections~\ref{sec: prelim ci and gluing}, \ref{sec: dim3 monomial curves}, \ref{sec: ci mon curves}, and \ref{sec: dim4 mon curves}.

\medskip
\noindent \textbf{Notation.}
We say that a matrix is \textit{irreducible} if it does not admit a gluing. Every non-irreducible matrix $A$ admits a gluing into two \textit{components} $A = A_1 \circ A_2$. Given a sequence of gluings $((A_1 \circ A_2) \circ A_3) \circ \dots $, the matrices $A_1, \dots, A_n$ are called its \textit{components}. A sequence of gluings is \textit{maximal} if each component is irreducible. The \textit{size} of a component is the number of columns of the component.

\smallskip

We have seen that the circuits characterise the distance reduction property: if all components have size one, see Corollary~\ref{cor: ci dist red iff dist red circuits}; and for monomial curves in $\mathbb A^4$ that are glued non complete intersections, see Theorem~\ref{thm: dim4 nci glue}. Below, we formulate generalisations about the distance reduction property.

\begin{conjecture}
    Suppose that $A$ is a $1 \times n$ matrix that admits a gluing with minimal components have size at most $3$. Then a minimal Markov basis $M$ for $A$ is distance reducing if and only if $M$ is distance reducing for the circuits of $A$.
\end{conjecture}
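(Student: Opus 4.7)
The plan is to proceed by induction on the number of irreducible components in a maximal gluing of $A$. The forward direction is immediate from the definitions, so the content is in the reverse direction: assuming $M$ reduces the distance of every circuit of $A$, show $M$ is distance reducing. The base case, where $A$ itself is irreducible of size at most $3$, is settled by Theorem~\ref{thm: dim3 all} (the sizes $n\le 2$ being trivial). For the inductive step, peel off one irreducible component and write $A = A' \circ A''$ where $A''$ has size at most $3$ and $A'$ is a gluing whose components still all have size at most $3$. Any minimal Markov basis of $A$ decomposes canonically as $M = M_{A'}\sqcup M_{A''}\sqcup\{w\}$, where $M_{A'}$ and $M_{A''}$ restrict to minimal Markov bases of $A'$ and $A''$, and $w$ is the unique gluing element with $\supp(w^+)\subseteq\supp(A')$, $\supp(w^-)\subseteq\supp(A'')$ (up to sign).

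The next step is to generalize Proposition~\ref{prop: dim4 nci reduces circuits implies reduces dim3 circuits}: after possibly swapping $A'$ and $A''$, if $M$ reduces the distance of all circuits of $A$ then $M_{A'}$ reduces the distance of all circuits of $A'$, and $M_{A''}$ reduces the distance of all circuits of $A''$. The argument proceeds as in the $\mathbb A^4$ case: given a circuit $z'$ of $A'$ that $M_{A'}$ fails to reduce, only $w$ can rescue it, and applicability of $w$ to $z'$ forces $|\supp(w^+)|=1$ (or the symmetric condition on $w^-$); reducing $z'$ then forces an inequality between the nonzero coordinate of $w^+$ and the sum of the $w^-$ coordinates. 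A companion circuit $z_{ij}$ supported on one coordinate in $A'$ and one in $A''$ can only be reduced by an element of $M_{A'}$ that is forced (by the Herzog-type classification applied to the irreducible component containing index $j$) to yield the opposite inequality, and comparing gives a contradiction. The analog of Lemma~\ref{lem: non reducible coordinate}, together with the fact that $A''$ has size at most $3$ so its minimal Markov basis is explicitly known, makes the pigeonhole run. Once this is established, the inductive hypothesis gives that $M_{A'}$ is distance reducing on all of $\ker(A')$, and the base case Theorem~\ref{thm: dim3 all} gives the same for $M_{A''}$.

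It remains to reduce arbitrary $z \in \ker(A)$. Writing $z = \sum \lambda_i u_i + \sum \mu_j v_j + \alpha w$ with $u_i\in M_{A'}$ and $v_j\in M_{A''}$, one first handles $\alpha = 0$ by projecting onto the two sides and invoking the two distance-reducing conclusions on $M_{A'}$ and $M_{A''}$. When $\alpha\ne 0$, use the relations in $\ker(A'')$ (which, if $A''$ is non-complete-intersection of size $3$, include the identity $g_1+g_2+g_3 = 0$ of Proposition~\ref{prop: 3 dim nci gi sum to zero}; if $A''$ is a complete intersection, no such relation is needed) to normalize the coefficients of the $v_j$ to be nonnegative with at least one equal to zero. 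Then the case analysis mirrors the proof of Proposition~\ref{prop: dim4 nci glue reverse}: either some $u_i$ or $v_j$ is applicable and reduces, or $w$ reduces the distance, which is ensured by the balance inequalities on $w$ inherited from the assumption that $M$ reduces the "cross" circuits supported on one coordinate of $A'$ and one of $A''$.

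The main obstacle is the final case analysis, because the number of "balance" inequalities that $w$ must satisfy grows with the number of minimal generators of $A''$ and with the number of "boundary" coordinates of $A'$ that $w$ touches. The saving grace is that $A''$ has at most $3$ minimal generators (size at most $3$ and Herzog's classification), so there are a bounded number of shapes for $w$ to analyze against $M_{A''}$, giving a finite list of analogues to conditions (ii) and (iii) of Theorem~\ref{thm: dim4 nci glue}. The delicate part is to match each of these inequalities with a specific circuit of $A$ whose reduction forces exactly that inequality, and conversely to verify the collection is sufficient for reducing arbitrary $z$; this will require a careful, mostly combinatorial but case-heavy bookkeeping very much in the style of Sections~\ref{sec: ci mon curves} and~\ref{sec: dim4 mon curves}, with the decorated gluing tree framework of Section~\ref{sec: decorated gluing trees} offering a natural language for packaging the case analysis uniformly.
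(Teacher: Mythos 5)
First, note that the statement you are proving is left as an open conjecture in the paper; there is no proof to compare against, so what matters is whether your outline closes the gaps the authors could not, and it does not. The most serious issue is that your second and third paragraphs defer exactly the content of the conjecture. You assert that the analogue of Proposition~\ref{prop: dim4 nci reduces circuits implies reduces dim3 circuits} ``proceeds as in the $\mathbb A^4$ case'', but that proof leans on Herzog's explicit description (Theorem~\ref{thm: Herzog dim 3}) of the small side: the contradiction there is obtained because the element $g_2'$ applicable to the bad circuit is \emph{minimal of its type}, a fact with no counterpart when the side in question is the large iterated gluing $A'$. Likewise, Lemma~\ref{lem: w cannot reduce circuits on one side} only guarantees that $w$ is irrelevant to the circuits of \emph{one} of the two sides; to run your induction on both $A'$ and $A''$ you need both restricted bases to reduce their own circuits, and nothing in the paper or in your sketch delivers the second half. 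Finally, you concede that matching each balance inequality on $w$ to a circuit, and verifying that the resulting finite list suffices to reduce an arbitrary $z$, ``will require a careful \dots case-heavy bookkeeping'': that bookkeeping \emph{is} the theorem. Example~\ref{example: dim4 reduces circuits but not reducing} shows the implication genuinely fails without the gluing hypothesis, so the sufficiency step cannot be waved through.

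Second, the induction is set up on the wrong object. A gluing decomposition is a binary tree, not a sequence: the conjecture's hypothesis allows shapes such as $(B_1 \circ B_2) \circ (B_3 \circ B_4)$ with each $B_i$ irreducible of size $3$, in which case $A = A' \circ A''$ forces both $A'$ and $A''$ to have size $6$, and you cannot ``peel off one irreducible component $A''$ of size at most $3$'' because gluings need not re-associate into caterpillar form. You would either need to prove that such a re-association exists under the hypotheses, or restate the induction over the gluing tree (in the spirit of Section~\ref{sec: decorated gluing trees}), in which case the cross-component analysis must handle a $w$ whose negative support meets an arbitrary glued block rather than a single Herzog-classified component. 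As it stands the proposal is a reasonable research plan, consistent in spirit with Sections~\ref{sec: ci mon curves} and~\ref{sec: dim4 mon curves}, but it is not a proof.
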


\begin{definition}
    A matrix $A$ has the \textit{circuit reduction property} if, for any minimal Markov basis $M$ of $A$, we have:
    \[
    M \text{ reduces the distance of the circuits of } A \implies
    M \text{ is distance reducing}.
    \]
    The set of all matrices with the circuit reduction property is denoted $\mathcal A$.
\end{definition}

\begin{conjecture}
    The set $\mathcal A$ is closed under gluing, i.e., if $A, B \in \mathcal A$
    admit a gluing then 
    $A \circ B \in \mathcal A$.
\end{conjecture}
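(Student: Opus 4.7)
The plan is to adapt the strategy behind the proofs of Theorem~\ref{thm: dim4 nci glue} (in particular Proposition~\ref{prop: dim4 nci glue reverse}) and Theorem~\ref{thm: dist red ci implies first kind}, leveraging the gluing structure of Markov bases together with the two circuit-reduction hypotheses on $A$ and $B$. First, I would write a given minimal Markov basis $M$ for $C = A \circ B$ (which is assumed to reduce the distance of the circuits of $C$) as $M = M_A \cup M_B \cup \{w\}$, where $M_A$ and $M_B$ project to minimal Markov bases for $A$ and $B$, and $w$ is the gluing move with $\supp(w^+) \subseteq [n]$ and $\supp(w^-) \subseteq \{n+1,\dots,n+m\}$. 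The gluing hypothesis also yields the direct sum decomposition $\ker(C) = (\ker(A) \times \{0\}) \oplus (\{0\} \times \ker(B)) \oplus \langle w \rangle_\ZZ$, so every $z \in \ker(C)$ admits a unique expression $z = x + y + kw$ with $x \in \ker(A)$, $y \in \ker(B)$, $k \in \ZZ$.

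The next step is to prove that the projected $M_A$ and $M_B$ are themselves distance reducing for $\ker(A)$ and $\ker(B)$, using the hypothesis $A, B \in \mathcal{A}$. The move $w$ is never applicable to a pure-$B$ circuit because $\supp(w^+) \subseteq [n]$ and $w^- \neq 0$, so $M_B$ alone reduces all pure-$B$ circuits, and $B \in \mathcal{A}$ then gives that $M_B$ is distance reducing for $\ker(B)$. By Lemma~\ref{lem: w cannot reduce circuits on one side}, one of (a) or (b) holds for $w$; by symmetry of the conjecture in $A$ and $B$, I may assume case (a), so that $w$ does not reduce the distance of any pure-$A$ circuit. Then $M_A$ reduces all pure-$A$ circuits, and $A \in \mathcal{A}$ gives that $M_A$ is distance reducing for $\ker(A)$.

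To conclude, by Theorem~\ref{thm: 1 norm reducing test via Graver basis} it suffices to show that $M$ reduces the distance of every Graver element $z \in G(C)$. Write $z = x + y + kw$ as above. If $k = 0$, then $z = x + y$ is a proper conformal decomposition unless $x = 0$ or $y = 0$; primitivity of $z$ forces $z \in \ker(A)$ or $z \in \ker(B)$, and the previous step produces the required reducer inside $M_A$ or $M_B$. For $k \neq 0$, one may assume $k > 0$ after replacing $z$ with $-z$. The plan here is a case analysis inspired by Proposition~\ref{prop: dim4 nci glue reverse}: aim to show that either $w$ directly reduces the distance of $z$, or a distance reducer of $x$ in $M_A$ (resp.\ of $y$ in $M_B$) lifts to a distance reducer of $z$ in $\ker(C)$.

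The main obstacle will be this last step. Even when $u \in M_A$ reduces the distance of $x$ in $\ker(A)$, the inequality $||(x-u)+kw_A|| < ||x+kw_A||$ need not follow from $||x-u|| < ||x||$, because coordinate-wise cancellations between $x$ and $kw_A$ can flip signs in positions where $u$ acts. Overcoming this likely requires either refining the generating set $\{M_A, M_B, w\}$ by replacing $w$ with a more adapted gluing move (exploiting the freedom in this choice), or an inductive combinatorial argument in the spirit of the decorated gluing trees of Section~\ref{sec: decorated gluing trees} that tracks how the support patterns of $M_A$, $M_B$, and $w$ interact with those of $z^+$ and $z^-$.
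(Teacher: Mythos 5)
The statement you are proving is presented in the paper as an open conjecture; the paper offers no proof of it, so there is nothing to match your proposal against. Taken on its own terms, what you have written is a plan rather than a proof, and it contains a concrete error in addition to the unresolved step you flag yourself. The error is the claim that $w$ is never applicable to a pure-$B$ circuit because $\supp(w^+)\subseteq[n]$ and $w^-\neq 0$. Applicability of $w$ to $z_{n+i,n+j}$ only requires $z_{n+i,n+j}^+\ge w^-$ or $z_{n+i,n+j}^-\ge w^-$, which happens whenever $\supp(w^-)$ is a singleton contained in $\{n+i,n+j\}$ with a small enough entry; in that case $w$ reduces the distance of the circuit exactly when $|w^+|<|w^-|$. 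This is precisely why Lemma~\ref{lem: w cannot reduce circuits on one side} is an either/or statement: in its case (a) you learn that $w$ reduces no pure-$A$ circuit, so $M_A$ reduces all of them and $A\in\mathcal A$ applies, but then $|\supp(w^-)|=1$ and $w$ may be the \emph{only} element of $M$ reducing some pure-$B$ circuit. You then cannot conclude that $M_B$ reduces the circuits of $B$, and the hypothesis $B\in\mathcal A$ tells you nothing about $M_B$ alone. Your symmetric reduction to the two components therefore breaks on one side, and some substitute for $w$'s contribution on that side is needed.

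The second gap is the one you name: lifting a reducer of $x$ (or $y$) to a reducer of $z=x+y+kw$. Lemma~\ref{lem: 1-norm reduction and conformal decomposition} is the paper's only transfer tool of this kind and it requires a conformal decomposition, which $z=x+y+kw$ generally is not. The paper's proofs in the glued cases (Theorem~\ref{thm: Rij implies M dist red}, Proposition~\ref{prop: dim4 nci glue reverse}) do not lift reducers abstractly: they run explicit coordinate-by-coordinate sign analyses on the coefficients of $z$ in the Markov basis, using normal forms such as $g_1+g_2+g_3=0$ or the inequalities $R_{i,j}$, and Example~\ref{example: dim4 reduces circuits but not reducing} shows the desired conclusion genuinely fails without a gluing, so any argument must use the gluing structure in an essential quantitative way. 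The heart of the conjecture is exactly the step you defer, and neither of your two suggested remedies is developed far enough to assess. As it stands the proposal does not establish the conjecture.
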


\begin{question}
    Is it possible to characterise the distance reduction property for non complete intersection monomial curves that do not admit a gluing?
\end{question}

\begin{question}
    Is there a connection between $M(A)$ and $\mathcal D(A)$ or $M(A)$ and $\mathcal D^s(A)$? Is there an algebraic or combinatorial description for the moves in $\mathcal D(A)$, which is analogous to strongly semi-conformal for universal Markov basis elements?
\end{question}

\begin{question}
    In the definition of positive (resp. negative) distance decompositions $z = u +_{pd} v$, if we assume that $u \in G(A)$ is an element of the Graver basis, then do we obtain the same set of distance irreducible elements?
\end{question}

Just as the distance reducing minimal Markov bases are characterised by certain inequalities, see Theorems~\ref{thm: dim3 ci} and \ref{thm: characterisation dist red first kind}, we expect that a characterisation of when the distance irreducible elements $D(A)$ form a Markov basis is also governed by inequalities.  

\begin{conjecture}
    Let $A = \begin{pmatrix}
    a_1 & a_2 & a_3
\end{pmatrix}$ be a complete intersection. Suppose that $A$ has a unique minimal Markov basis 
    \[
    M = \{b := (b_1, -b_2, 0), \, c := (c_1, c_2, -c_3)\} 
    \]
    with $b_1 > b_2$. If $M$ is not distance reducing, then $D(A) = M$ if and only if $2 c_1 < b_1 + b_2$.
\end{conjecture}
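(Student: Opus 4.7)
My plan mirrors the proof of Theorem~\ref{thm: dim3 ci} and exploits the rigid structure forced by uniqueness of $M$. Uniqueness requires $c_1 < b_1$ and $c_2 < b_2$ (otherwise $c \pm b$ would give another minimal type-$3$ element), and by Theorem~\ref{thm: dim3 ci}, $M$ failing to be distance reducing is equivalent to $c_1 \geq c_2 + c_3$. Uniqueness also gives $S(A) = M(A) = M$, so $M \subseteq D(A)$ is automatic by Proposition~\ref{prop: dist irred chain inclusions}; the content of the conjecture is a sharp criterion for this inclusion to be an equality.

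For the forward direction I would argue by contrapositive: assuming $2c_1 \geq b_1 + b_2$, I exhibit $z := c - b \in D(A) \setminus (M \cup -M)$. Since $c_1 < b_1$, we have $z^+ = (0, c_2 + b_2, 0)$ and $z^- = (b_1 - c_1, 0, c_3)$, giving $||z|| = b_1 + b_2 - c_1 + c_2 + c_3$. Enumerating $u = \gamma b + \delta c \in \ker(A)$ with $u^+ \leq z^+$, the inequalities $-\delta c_3 \leq 0$, $\gamma b_1 + \delta c_1 \leq 0$, and $-\gamma b_2 + \delta c_2 \leq c_2 + b_2$ combined with $c_1 < b_1$, $c_2 < b_2$, and $2c_1 > b_1$ (which follows from $2c_1 \geq b_1 + b_2$) force $(\gamma, \delta) \in \{(0,0), (-1, 0), (-1, 1)\}$. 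The candidate $u = -b$ yields $v = c$, and $||v|| < ||z||$ simplifies exactly to $2c_1 < b_1 + b_2$, contradicting the assumption; the candidate $u = z$ gives $v = 0$, not proper. Hence no PDD, so $z \in D^+(A)$. An analogous enumeration of PDDs of $-z = b - c$ using the non-distance-reducing hypothesis $c_1 \geq c_2 + c_3$ shows $-z \in D^+(A)$, so $z \in D^-(A)$ by Proposition~\ref{prop: dist irred is symmetric}, giving $z \in D(A)$.

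For the reverse direction, assume $2c_1 < b_1 + b_2$ and take $z = \alpha b + \beta c \notin \{0, \pm b, \pm c\}$; by Proposition~\ref{prop: dist irred is symmetric} we may assume $\beta \geq 0$. The cases $\beta = 0$ and $\alpha = 0$ give trivial PDDs via $u = \sign(\alpha) b$ and $u = c$ respectively. When $\alpha, \beta \geq 1$, the move $u = b$ yields a PDD because a direct computation gives $||z - b|| - ||z|| \leq -b_1 + b_2 < 0$ in every subcase of the sign of $z_2$. The remaining mixed-sign case $\alpha \leq -1, \beta \geq 1$ is the crux: one must distinguish subcases on the sign of $z_1 = \alpha b_1 + \beta c_1$ and use $u \in \{-b, c, c - b\}$ adaptively, so that the residual $v = z - u$ drops in norm by precisely the amount afforded by $2c_1 < b_1 + b_2$. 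The principal obstacle is this mixed-sign case, especially near the circuit boundary $z_1 = 0$, where several natural candidates $u$ yield only $||v|| = ||z||$ rather than a strict inequality; the proof will require a delicate combination of the inequalities $2c_1 < b_1 + b_2$, $c_1 < b_1$, and $c_1 \geq c_2 + c_3$, mirroring the case analyses used to prove Theorems~\ref{thm: dim3 nci} and~\ref{thm: Rij implies M dist red}.
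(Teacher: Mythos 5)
First, note that the statement you are proving is left as an open \emph{conjecture} in the paper, so there is no proof of record to compare against; your proposal has to stand on its own. Your forward direction essentially does. The consequences of uniqueness ($c_1 < b_1$, $c_2 < b_2$), the translation of ``not distance reducing'' into $c_1 \ge c_2 + c_3$ via Theorem~\ref{thm: dim3 ci}, and the automatic inclusion $M \subseteq D(A)$ via Proposition~\ref{prop: dist irred chain inclusions} are all correct. Your enumeration of candidate summands $u = \gamma b + \delta c$ with $u^+ \le (c-b)^+$ does force $(\gamma,\delta) \in \{(0,0),(-1,0),(-1,1)\}$, the decomposition $c-b = (-b) +_{pd} c$ is a positive distance decomposition exactly when $2c_1 < b_1+b_2$, and the analogous decomposition $b - c = (-c) +_{pd} b$ fails exactly because $c_1 \ge c_2 + c_3$; so under $2c_1 \ge b_1+b_2$ you correctly obtain $c - b \in D(A) \setminus \pm M$. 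This is consistent with Example~\ref{example: A = 4_9_37}, where the extra element of $D(A)$ is precisely $\pm(c-b)$.

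The reverse direction, however, is not a proof, and the gap you flag in the mixed-sign case $\alpha \le -1$, $\beta \ge 1$ is more than an omitted routine verification: the candidate set $\{-b,\, c,\, c-b\}$ you propose to use adaptively is demonstrably too small. Take the circuit $z = (0, z_2, -z_3) = \alpha b + \beta c$ with $\alpha < 0 < \beta$, which lies outside $\pm M$ and therefore must admit a positive or negative distance decomposition if $D(A) = M$. Here $-b$ is applicable to $z^+$ but increases the norm by $b_1 - b_2 > 0$, and $c$ is applicable to $z^-$ but changes the norm by $c_1 - c_2 - c_3 \ge 0$; the moves $b$, $-c$ and $b-c$ are not applicable at all. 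The only remaining candidate is $u = c - b$, and a direct computation gives $||z - (c-b)|| - ||z|| = (b_1 - b_2) - (c_1 + c_2 + c_3)$. The required inequality $b_1 - b_2 < c_1 + c_2 + c_3$ does \emph{not} follow from the four inequalities you have in hand ($c_1 < b_1$, $c_2 < b_2$, $c_1 \ge c_2 + c_3$, $2c_1 < b_1 + b_2$): for instance $b = (20,-1,0)$, $c = (5,0,-1)$ satisfies all four and violates it. Such numerical configurations happen to fail the uniqueness hypothesis once realised by an actual matrix $A$ in Herzog's normal form, but that is exactly the point: a correct proof must either extract the missing inequality from uniqueness and the normal form, or admit summands $u = \gamma b + \delta c$ with larger coefficients. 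The latter are genuinely needed in nearby examples --- for $A = (1,100,3)$ (which fails uniqueness) the circuit $(0,3,-100)$ is distance-decomposed only by moves such as $-b + 25c$, with every element of $\pm\{b, c, c-b\}$ failing. Until the mixed-sign case is closed along one of these lines, the ``if'' direction remains unproven.
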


\begin{example}\label{example: A = 4_9_37}
    Let $A = \begin{pmatrix}
        4 & 9 & 37
    \end{pmatrix}$ be a matrix. In this case we have that $A$ is a complete intersection with unique Markov basis
    \[
    M = \begin{bmatrix}
        9 & -4 & 0 \\
        7 & 1 & -1
    \end{bmatrix}.
    \]
    We define $b = (9, -4, 0)$ and $c = (7, 1, -1)$. Since $c_1 \ge c_2 + c_3$, by Theorem~\ref{thm: dim3 ci}, we have that $M$ is not distance reducing. Furthermore, we have that $2c_2 \ge b_1 + b_2 $ and by an explicit computation we have
    \[
    D(A) = M \cup \pm \begin{bmatrix}
        2 & -5 & 1
    \end{bmatrix}.
    \]
    So, in particular, a unique minimal Markov basis need not be equal to $D(A)$.
\end{example}

\begin{conjecture}
    Fix a matrix $A$. Then $\MD(A) \subseteq \MD^s(A)$.
\end{conjecture}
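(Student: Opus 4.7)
The plan is to show, for any $b \in \MD(A)$, that $b$ lies in some minimal strongly distance reducing Markov basis. I would begin by fixing a minimal distance reducing Markov basis $B$ with $b \in B$; by its minimality and Theorem~\ref{thm: 1 norm reducing test via Graver basis}, there is a witness $z_b \in G(A)$ for which $b$ is the unique element of $B$ reducing the distance of $z_b$. After possibly swapping the two sides of $z_b$, I may assume $b^+ \le z_b^+$, so $b$ supplies the strong reduction condition at $z_b$ on the $z_b^+$ side.

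Next, I would extend $B$ to a strongly distance reducing superset $B^s$ by sweeping over $z \in G(A)$ and adjoining, for each side where $B$ fails the strong condition, an element of $\ker(A)$ satisfying that side's condition. Such elements exist by Proposition~\ref{prop: Graver is 1 norm reducing}. Crucially, since $b$ already satisfies the strong condition at $z_b$ on the $z_b^+$ side, no addition is required there; any addition at $z_b$ takes place on the $z_b^-$ side and consists of some $u$ with $u^+ \le z_b^-$, which by disjointness of $\supp(z_b^+)$ and $\supp(z_b^-)$ cannot also satisfy $u^+ \le z_b^+$. Thus additions made at the witness $z_b$ itself cannot subsume $b$'s role.

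For additions made at $z \neq z_b$, I would argue that one can always choose a candidate that does not simultaneously reduce the distance of $z_b$ on the $z_b^+$ side; roughly, the space of admissible choices in $\ker(A)$ is rich enough (by Proposition~\ref{prop: Graver is 1 norm reducing}) to avoid interference with the witness. Once such a $B^s$ is built, $b$ remains essential (witnessed by $z_b$), and by Corollary~\ref{cor: universal dist red bases are finite} we may descend to a minimal strongly distance reducing subset $B^{ss} \subseteq B^s$ containing $b$, which gives $b \in \MD^s(A)$.

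The hard part will be the previous step: in the pathological case where every admissible addition at some $z \neq z_b$ is also applicable to $z_b^+$ and reduces the distance of $z_b$, I would need to conclude that $b$ itself admits a positive distance decomposition (placing $b$ outside $D^+(A)$) and then leverage the decomposition structure, together with the characterisations in Definition~\ref{def: distance irreducible elements}, to show directly that $b \in D^w(A) \subseteq \MD^s(A)$. This delicate case analysis, relating the support overlap between $b$, $z_b$, and the forced additions, is precisely the difficulty highlighted by the authors in Remark~\ref{rmk: dist irred poset} and forms the crux of the argument.
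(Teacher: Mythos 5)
The statement you are attempting is left as an open conjecture in the paper; there is no proof to compare against, and Remark~\ref{rmk: dist irred poset} explicitly identifies the obstruction that your proposal does not overcome. Your outline is the natural one --- extend a minimal distance reducing $B \ni b$ to a strongly distance reducing $B^s$ while keeping $b$ essential, then minimalise --- but the crux is exactly the step you defer: when adjoining elements to repair the strong condition at some $z \neq z_b$, you assert that one can always pick a candidate that does not also reduce the distance of $z_b$ on the $z_b^+$ side, citing Proposition~\ref{prop: Graver is 1 norm reducing}. That proposition only guarantees the existence of one conformal Graver summand that reduces the distance; it says nothing about the variety of admissible choices, so it cannot rule out the scenario in which \emph{every} element of $\ker(A)$ that repairs the strong condition at $z$ also covers $z_b$ on the positive side. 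In that scenario every strongly distance reducing superset of $B$ contains such an element, $b$ ceases to be the unique reducer of $z_b$, and nothing prevents every minimalisation from discarding $b$ --- which is precisely the failure mode described in Remark~\ref{rmk: dist irred poset}.

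Your fallback for this pathological case is also unsupported: from ``every forced addition at $z$ is applicable to $z_b^+$ and reduces $z_b$'' you propose to deduce that $b$ admits a positive distance decomposition and hence that $b \in D^w(A)$. Neither implication is argued. A positive distance decomposition of $b$ would place $b$ outside $D^+(A)$, and since $D^w(A) = D^+(A) \cup D^-(A)$, knowing $b \notin D^+(A)$ gives no information about membership in $D^w(A)$; moreover no mechanism is given that converts the support-interference data between $b$, $z_b$ and the forced additions into a decomposition of $b$ at all. Until this case is genuinely excluded or handled, the argument does not close. Two smaller points: the witness $z_b$ should be produced by applying Theorem~\ref{thm: 1 norm reducing test via Graver basis} to $B \setminus \{b\}$ (this part is fine), and the final descent to a minimal strongly distance reducing subset containing $b$ needs the norm-bound argument from the \emph{proof} of Corollary~\ref{cor: universal dist red bases are finite}, not merely its statement, since strongly distance reducing sets can be infinite.
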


\subsection{Distance reducing complex}\label{sec: discussion dist red complex} Throughout the paper, we have predominantly focused on the $1$-norm and its induced metric. However, the notion of distance reduction applies to all metrics so we ask how the results of this paper generalise to other metrics. In particular, to other metrics derived from norms. In this section, given a Markov basis $M$, we define the \textit{distance reducing complex}, which is a polyhedral complex whose points correspond to metrics $d$ for which $M$ is distance reducing with respect $d$. We begin by exploring a motivating example.

\medskip \noindent \textbf{Running example.} Fix the matrix 
$A = \begin{pmatrix} 2 & 3 & 4 \end{pmatrix}$, as in Example~\ref{example: A = 2_3_4 continued}. Its Graver basis is:
\[
a_1 := 
\begin{bmatrix}
3 \\ -2 \\ 0
\end{bmatrix}, \ 
a_2 := 
\begin{bmatrix}
2 \\ 0 \\ -1
\end{bmatrix}, \ 
a_3 := 
\begin{bmatrix}
1 \\ -2 \\ 1
\end{bmatrix}, \ 
a_4 := 
\begin{bmatrix}
1 \\ 2 \\ -2
\end{bmatrix}, \ 
a_5 := 
\begin{bmatrix}
0 \\ 4 \\ -3
\end{bmatrix}.
\]
Recall that $A$ has two minimal Markov bases:
\[
M_1 = \{a_1, a_2 \} \quad \text{and} \quad
M_2 = \{a_2, a_3 \}.
\]
And so its indispensable set and universal Markov basis are:
\[
    S(A) = \{a_2\} 
    \quad \text{and} \quad 
    M(A) = \{a_1, a_2, a_3\}.
\]

We now determine the norm-induced metrics $d$ for which $M_1$ is a $d$-reducing Markov basis. First we consider the space of all norm-induced metrics and the possible norms of the elements of the Graver basis.

\medskip
\noindent \textbf{Metric cone}. Given two metrics $d_1$ and $d_2$ on $\ZZ^3$ and a positive real number $\lambda$, we observe that the functions $(\lambda d_1) : (x, y) \mapsto \lambda d_1(x, y)$ and $(d_1 + d_2) : (x, y) \mapsto d_1(x,y) + d_2(x,y)$ define metrics on $\ZZ^3$. So we define the \textit{metric cone} $\MC$ to be the space of norm-induced metrics on $\ZZ^3$. We expect that any norm on the Graver basis extends to a norm on $\ZZ^3$.

\begin{conjecture}
    Suppose that $||\cdot||$ is a norm on a subset $G \subseteq \ZZ^3 \setminus \{0\}$. By this we mean that $||\cdot|| : G \rightarrow \RR_{\ge 0}$ is a function that satisfies:
    \begin{itemize}
        \item $||g|| > 0$ for all $g \in G$,
        \item If $g = \sum_{h \in G} \lambda_h h$ with finitely many nonzero $\lambda_h \in \RR$, then $||g|| \le \sum_{h \in G} |\lambda_h| \cdot ||h||$.
    \end{itemize} Then $||\cdot||$ is the restriction of a norm on $\ZZ^3$.
\end{conjecture}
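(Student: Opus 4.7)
The natural approach is via the Minkowski gauge of the convex hull of the normalised elements of $G$. Set
\[
B := \mathrm{conv}\bigl\{\pm g / ||g|| : g \in G\bigr\} \subseteq \RR^3,
\]
and define $\mu_B(x) := \inf\{t > 0 : x \in tB\}$, with the infimum over the empty set interpreted as $+\infty$. Since $B$ is a symmetric convex set containing the origin, $\mu_B$ is a seminorm on its domain of finiteness. The plan is to show that $\mu_B$ agrees with $||\cdot||$ on $G$, that $\mu_B$ is a bona fide norm on $\mathrm{span}_\RR(G)$, and finally to extend it to a norm on $\RR^3$; restricting the extension to $\ZZ^3$ gives the sought norm.

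The central step is to verify $\mu_B(g) = ||g||$ for every $g \in G$. The bound $\mu_B(g) \leq ||g||$ is immediate from $g / ||g|| \in B$. Conversely, if $g \in tB$ for some $t > 0$, then we may write $g/t = \sum_i \alpha_i \sigma_i g_i / ||g_i||$ with $g_i \in G$, $\sigma_i \in \{\pm 1\}$, $\alpha_i \geq 0$, and $\sum_i \alpha_i \leq 1$. Rearranging gives $g = \sum_i \lambda_i g_i$ with $\lambda_i := t \alpha_i \sigma_i / ||g_i||$, and the subadditivity hypothesis applied to this decomposition yields
\[
||g|| \leq \sum_i |\lambda_i| \cdot ||g_i|| = t \sum_i \alpha_i \leq t,
\]
so $||g|| \leq \mu_B(g)$ follows by taking the infimum over such $t$.

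Next, I would verify that $\mu_B$ is genuinely a norm on $\mathrm{span}_\RR(G)$. Subadditivity and positive homogeneity are standard for Minkowski gauges of symmetric convex sets. Positive definiteness demands that $B$ be bounded with $0$ in its relative interior. Both hold automatically whenever $G$ is finite and every $||g|| > 0$: the convex hull $B$ is then a symmetric convex polytope, hence bounded, and since $\{\pm g/||g|| : g \in G\}$ contains a basis of $\mathrm{span}_\RR(G)$, the origin lies in its relative interior. If $G$ fails to span $\RR^3$, choose any algebraic complement $V$ of $\mathrm{span}_\RR(G)$ together with any norm $|\cdot|_V$ on $V$; then the assignment $x_1 + x_2 \mapsto \mu_B(x_1) + |x_2|_V$ defines a norm on $\RR^3$ that restricts to $||\cdot||$ on $G$.

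The main obstacle is the case when $G$ is infinite. Then $B$ may fail to be bounded or closed, and $\mu_B$ can degenerate into a proper seminorm, yielding only a seminorm rather than a norm. For the intended applications, namely when $G$ is a subset of the Graver basis, this obstacle disappears, since the standing assumption $\ker(A) \cap \NN^n = \{0\}$ forces the Graver basis to be finite. For a general infinite $G$, a natural additional hypothesis such as $\inf_{g \in G} ||g|| / |g|_\infty > 0$ for some reference norm $|\cdot|_\infty$ on $\RR^3$ should suffice to keep $B$ bounded, after which the rest of the argument carries through unchanged.
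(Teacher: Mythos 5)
The paper states this only as a conjecture and gives no proof, so there is nothing of the authors' to compare your argument against; I can only assess it on its own terms. For \emph{finite} $G$ your argument is correct and complete. The observation driving it is that the second hypothesis is precisely the inequality $||g|| \le \mu_B(g)$, where $\mu_B$ is the gauge of $B = \mathrm{conv}\{\pm g/||g|| : g \in G\}$: membership of $g/t$ in $B$ is exactly a finite convex combination of the generators, and collecting a generator that appears with both signs only decreases $\sum_h |\lambda_h|\cdot||h||$, so the hypothesis applies; the reverse inequality $\mu_B(g) \le ||g||$ is trivial. For finite $G$ the set $B$ is a bounded symmetric polytope spanning $\mathrm{span}_\RR(G)$, so $\mu_B$ is a genuine norm there, and the direct-sum extension to $\RR^3$ finishes the job. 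Since the intended $G$ is (a subset of) the Graver basis, which is finite under the paper's standing assumptions, this settles the case the authors actually use.

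However, you present the infinite case as a technical obstacle to your particular method, when in fact the conjecture as literally stated is \emph{false} for infinite $G$, so no method can close that gap without an added hypothesis. Take $G = \{g_n := (n^2, n, 0) : n \ge 1\}$ with $||g_n|| := n$. If $g_N = \sum_n \lambda_n g_n$, then comparing second coordinates gives $N = \sum_n \lambda_n n$, whence $\sum_n |\lambda_n|\cdot||g_n|| = \sum_n |\lambda_n| n \ge \left|\sum_n \lambda_n n\right| = N = ||g_N||$, so both axioms hold. But any norm on $\RR^3$ (or any function on $\ZZ^3$ satisfying your two axioms with $G$ replaced by $\ZZ^3 \setminus \{0\}$) extending $||\cdot||$ would satisfy $N((n,1,0)) = N(g_n)/n = 1$ for all $n$, while the triangle inequality forces $N((n,1,0)) \ge n\,N((1,0,0)) - N((0,1,0)) \to \infty$, a contradiction. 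This is exactly the degeneration you warn about: here $B$ contains the entire line $\RR\,(1,0,0)$, so $\mu_B$ is only a seminorm. You should therefore either restrict the statement to finite $G$, or add your proposed hypothesis $\inf_{g \in G} ||g||/|g|_\infty > 0$, which does suffice: it bounds the generators of $B$, hence $B$ itself, and the remainder of your argument is insensitive to the cardinality of $G$.
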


Suppose $||\cdot||$ is a norm on $\ZZ^3$. Let $n_i := ||a_i|| \in \RR_{>0}$ for each $i \in [5]$. Consider the vector $n = (n_1, n_2, n_3, n_4, n_5) \in \RR^5$. 
If the above conjecture holds, then set of possible vectors $n$ naturally forms a rational polyhedral cone defined by the inequalities that arise from the triangle equality. For example
\[
a_1 = a_2 + a_3 \implies 
n_1 \le n_2 + n_3, \ 
n_2 \le n_1 + n_3, \ 
n_3 \le n_1 + n_2.
\]
The set of inequalities is derived from the set of triangles or, more generally, by the set of linear relations among the given vectors $a_1, \dots, a_5$. The complete set of triangles is given by
\begin{equation} \label{eqn: triangles 1-5}
    123, 12^24, 1^22^35, 13^24, 13^35, 14^35^2,
    234, 23^25, 24^25, 345
\end{equation}
where $\alpha^i\beta^j\gamma^k$ means a triangle whose edges are $i a_\alpha$, $j a_\beta$ and $k a_\gamma$.

As a result, we get a projection of the metric cone $\MC_5 \subseteq \RR_{>0}^5$ whose rays are given by the columns of the following matrix
\[
\begin{bsmallmatrix}
    2 & 1 & 1 & 3 & 0 \\
    1 & 1 & 0 & 2 & 1 \\
    1 & 0 & 1 & 1 & 1 \\
    0 & 1 & 1 & 1 & 2 \\
    1 & 1 & 2 & 0 & 3 
\end{bsmallmatrix}.
\]
The rows of the above matrix correspond to the values of $n_1, n_2, \dots, n_5$ respectively.

\medskip

\noindent \textbf{Distance reducing property.}
We now consider the subset of points $c \in \MC$ such that for any norm $|| \cdot ||$ in the fiber of $c$, the Markov basis $M_1$ is distance reducing with respect to $||\cdot||$. This set has a natural polyhedral-complex structure, which we now describe for this example.

Consider the triangle with sides $a_1, a_2, a_3$, which appears in suitably large fibers of $A$. Let $x$ be the vertex where $a_1$ and $a_3$ meet and $y$ be the vertex where $a_2$ and $a_3$ meet.
By assumption, the basis $M_1$ is distance reducing so either the move from $x$ by $a_1$ reduces the distance or the move from $y$ by $a_2$ reduces the distance. Therefore, either
$n_3 > n_1$ or $n_3 > n_2$.

Next consider the triangle $a_1, 2a_2, a_4$. Starting with the distance $n_4$, we can either reduce it using $a_1$ or $a_2$. If we can reduce it using $a_1$ then we have $a_4 > 2a_2$. Otherwise, if we can reduce it using $a_2$ then we have $a_4 > a_3$.

Continuing in this way, we obtain a set of conditions that must be satisfied by the metric whenever $M_1$ has the distance reducing property. For each triangle: $123$, $12^24$ and $1^22^35$ we reduce $a_3$, $a_4$ and $a_5$ by either moving along $a_1$ or $a_2$. However, for the triangle $1^22^35$, the reduction by move $a_1$ gives rise to a triangle $12^36$ where $a_6 = (3, 2, -3)^T$ does not belong to the Graver basis. The resulting set of conditions are shown in Table~\ref{table: triangle reduction}.

\begin{table}[t]
    \centering
    \begin{tabular}{ccc}
        \toprule
        \multirow{2}{*}{Triangle} & \multicolumn{2}{c}{Reduction conditions} \\
        & Reduce by $a_1$ & Reduce by $a_2$ \\
        \midrule
        $123$       & $n_3 > n_2$   & $n_3 > n_1$ \\
        $12^24$     & $n_4 > 2n_2$  & $n_4 > n_3$ \\
        $1^22^35$   & $n_5 > {\color{blue} n_6}$   & $n_5 > 2n_3$ \\
        \color{blue} $12^36$     & \color{blue} $n_6 > 3n_2$ &  \color{blue} $n_6 > n_4$ \\
        \bottomrule
    \end{tabular}
    \caption{Triangles and their reduction moves. A distance reducing metric satisfies at least one condition from each row.}
    \label{table: triangle reduction}
\end{table}

The vector $a_6$ is assumed to have norm $n_6 := ||a_6||$. The extended metric cone $\MC_6 \subseteq \RR^6_{>0}$ is derived from the triangle inequalities given by previous set of triangles (\ref{eqn: triangles 1-5}) together with:
\begin{equation} \label{eqn: triangles 6}
    12^36, 1^23^36, 14^36^2, 156,
    2^236, 246, 2^356^2,
    34^26, 3^35^26,
    4^356.
\end{equation}
The metric cone $\MC_6 \subseteq \RR^6_{>0}$ is given by the cone over the columns of the matrix:
\[
\begin{bsmallmatrix}
    2 & 1 & 1 & 3 & 0 & 3 \\
    1 & 1 & 0 & 2 & 1 & 1\\
    1 & 0 & 1 & 1 & 1 & 2\\
    0 & 1 & 1 & 1 & 2 & 1 \\
    1 & 1 & 2 & 0 & 3 & 3 \\
    1 & 2 & 1 & 3 & 3 & 0 
\end{bsmallmatrix}.
\]

\begin{remark}
    We may avoid adding $a_6$ into the computation and consider only the projection of the distance-reducing metrics onto $\MC_5$. If we do this, then stay within the cone defined by (\ref{eqn: triangles 1-5}) and we obtain weaker inequalities derived from the triangle inequality. For instance: $n_5 > |3n_2 - n_1|$, which is derived from the reduction by $a_1$, and $n_5 > 2n_3$, which is derived from the reduction by $a_2$.
\end{remark}

\begin{remark}
    The metric cone $\MC_6$ admits a surprising symmetry under a representation of the group $S_3$. Let $\sigma_1 = (1, 2)$ and $\sigma_2 = (2,3)$ be adjacent transpositions that generate $S_3$. The action is given by 
    \[
    \sigma_1 = 
    \begin{bmatrix}
        0 & 1 & 0 \\
        1 & 0 & 0 \\
        0 & 0 & 1 \\
    \end{bmatrix}
    \oplus
    \begin{bmatrix}
        1 & 0 & 0 \\
        0 & 0 & 1 \\
        0 & 1 & 0 \\
    \end{bmatrix}
    \quad \text{and} \quad
    \sigma_2 = 
    \begin{bmatrix}
        1 & 0 & 0 \\
        0 & 0 & 1 \\
        0 & 1 & 0 \\
    \end{bmatrix}
    \oplus
    \begin{bmatrix}
        0 & 0 & 1 \\
        0 & 1 & 0 \\
        1 & 0 & 0 \\
    \end{bmatrix}
    \]
    where $A \oplus B$ denotes the block diagonal matrix 
    $\begin{bmatrix}
        A & 0 \\ 0 & B
    \end{bmatrix}$. Similarly, there is a permutation action that acts on the coordinates of the cone, which suggests that the roles of $a_1$, $a_5$ and $a_6$ are interchangeable, and whenever we permute their roles there is an induced permutation on $a_2$, $a_3$ and $a_4$. One reason why this may happen is because the vectors $a_1, \dots, a_6$ can be partitioned into two triangles $a_1, a_5, a_6$ and $a_2, a_3, a_4$.
\end{remark}

\noindent 
\textbf{Distance reducing complex.}
Every distance reducing metric for $M_1$ satisfies at least one of the inequalities in each row in \Cref{table: triangle reduction}. For each row in the table, we choose one of the inequalities. For each collection of such choices, we intersect the corresponding open halfspaces and take their intersection with the metric cone $\MC_6$. This gives a collection of relatively open cones that define the \textit{distance reducing complex}. For example, if we choose the inequalities
\[
I_1 = \{n_3 > n_2, \ n_4 > 2n_2, \ n_5 > n_6, \ n_6 > 3n_2 \},
\]
which is obtained by always reducing by $a_1$,
then the intersection of the cone defined by $I_1$ and $\MC_6$ is the cone $\cone(A_1)$ over the columns of the matrix
\[
A_1 = 
\begin{bsmallmatrix}
    1 & 0 & 2 & 3 & 4 & 6 \\
    0 & 1 & 1 & 1 & 1 & 1 \\
    1 & 1 & 1 & 2 & 3 & 5 \\
    1 & 2 & 2 & 2 & 2 & 4 \\
    2 & 3 & 3 & 3 & 5 & 9 \\
    1 & 3 & 3 & 3 & 3 & 3
\end{bsmallmatrix}.
\]
On the other hand, if we choose the inequalities
\[
I_2 = \{n_3 > n_2, \ n_4 > 2n_2, \ n_5 > n_6, \ n_6 > n_4 \},
\]
then we obtain the cone $\cone(A_2)$ over the columns of the matrix
\[
A_2 = 
\begin{bsmallmatrix}
    1 & 2 & 3 & 4 & 0 & 2 & 3 & 4 & 7 \\
    0 & 1 & 1 & 1 & 1 & 1 & 1 & 1 & 2 \\
    1 & 2 & 2 & 3 & 1 & 1 & 2 & 3 & 5 \\
    1 & 2 & 2 & 2 & 2 & 2 & 2 & 2 & 4 \\
    2 & 4 & 2 & 5 & 3 & 3 & 3 & 5 & 8 \\
    1 & 2 & 2 & 2 & 3 & 3 & 3 & 3 & 4
\end{bsmallmatrix}.
\]
Note that, despite choosing different inequalities, the interiors of the cones $\cone(A_1)$ and $\cone(A_2)$ intersect. Their intersection $\cone(A_{12})$ is the cone over the columns of the matrix
\[
A_{12} = 
\begin{bsmallmatrix}
    1 & 0 & 2 & 3 & 4 & 3 & 4 & 5 & 9 \\
    0 & 1 & 1 & 1 & 1 & 1 & 1 & 1 & 2 \\
    1 & 1 & 1 & 2 & 3 & 3 & 3 & 4 & 7 \\
    1 & 2 & 2 & 2 & 2 & 3 & 3 & 3 & 6 \\
    2 & 3 & 3 & 3 & 5 & 6 & 3 & 7 & 12 \\
    1 & 3 & 3 & 3 & 3 & 3 & 3 & 3 & 6
\end{bsmallmatrix}.
\]
This cone corresponds to the choice of inequalities $I_1 \cup I_2$.

\medskip
\noindent \textbf{Reduction complex.}
The natural generalisation of the complex in the above example is constructed as follows.

\begin{definition}[$B$-reduction closed]
     Let $B$ be a Markov basis for $A \in \ZZ^{d \times n}$. We say that a set $S \subseteq \ker(A)$ is \textit{closed under $B$-reductions} if $B \subseteq S$ and if for any linear relation $\alpha_b b + \sum_{b' \in B \backslash \{b\}} \alpha_{b'} b' = \alpha_s s $ with $b \in B$, $\alpha_b \ge 1$, $s \in S$, $\alpha_s$ nonzero and $\alpha_x\in \ZZ$ for all $x$, there exists $s' \in S$ and $\alpha_{s'} \in \ZZ$ nonzero such that the linear relation $(\alpha_b - 1)b +\sum_{b' \in B \backslash \{b\}} \alpha_{b'} b' = \alpha_{s'} s'$ holds. We call the second linear relation the \textit{reduction} of the first linear relation with respect to $b$. We say that the reduction is \textit{unique} with respect to $b$, if $s'$ above is unique.
\end{definition}

If a finite set $S \supseteq B$ is not closed under $B$-reductions, then it can be made closed by adding finitely many elements of $\ker(A)$. If we assume that only primitive elements may be added, then this closure is unique and defines a closure operation is the usual sense. If $S$ contains only primitive elements, then all reductions are unique.

\begin{definition}[Reduction inequalities]
    Let $B$ be a Markov basis and fix a finite set $S \subseteq \ker(A)$ closed under $B$-reductions. For each linear relation $\alpha_b b + \sum_{b' \in B \backslash \{b\}} \alpha_{b'} b' = \alpha_s s $ with $b \in B$, $\alpha_b \ge 1$, $s \in S$, $\alpha_s$ nonzero and $\alpha_x\in \ZZ$ for all $x$ that admits a reduction to $(\alpha_b - 1)b +\sum_{b' \in B \backslash \{b\}} \alpha_{b'} b' = \alpha_{s'} s'$ with respect to $b$, we define the \textit{reduction inequality} $|\alpha_s| n_s > |\alpha_{s'}| n_{s'}$, which defines a half-space in $\RR^{S}$ whose coordinates are denoted $n_i$ for each $i \in S$.
\end{definition}

\begin{definition}[Distance reducing complex]
    Let $B$ be a Markov basis and $S \subseteq \ker(A)$ be a finite set closed under $B$-reductions. Let $\ML = \{ \sum_{b \in B} \alpha_b b = \alpha_s s \colon \alpha_s \neq 0\}$ be the set linear relations that admit reductions. We identify a relation  $\sum_{b \in B} \alpha_b b = \alpha_s s$ with its negative $\sum_{b \in B} (-\alpha_b) b = -\alpha_s s$, so we assume that any individual coefficient $\alpha_b$ is non-negative. For each relation $L \in \ML$, we obtain the set of reduction inequalities $I_L = \{|\alpha_s| n_s > |\alpha_{s'}| n_{s'}\}$ with one inequality for each $b \in B$ such that $\alpha_b \neq 0$. The \textit{distance reducing complex $\Delta(B, S) \subseteq \RR^S$} is the union $\Delta(B, S) = \bigcup_{T} \MC_T$ where $T$ runs over all \textit{transversals} of $\{I_L \colon L \in \ML\}$ and $\MC_T = \bigcap_{t \in T} H_t$ is the intersection of the half-spaces $H_t \subseteq \RR^S$ defined by the inequality $t \in T$.
\end{definition}

By taking further intersections, and recording the inequalities, it is possible to imbue a natural polyhedral complex structure to $\Delta(B, S)$. Explicitly, the maximal cones are indexed by super-sets of transversals of $\{I_L\}$.

\begin{question}
    For which metrics is the Markov basis \textit{minimally distance reducing}? 
    
\end{question}

\begin{remark}
    For other matrices, it may be possible for the reduction conditions to be satisfied but the Markov basis to not be distance reducing.
\end{remark}

\medskip \noindent
\textbf{General metric cone.}
Let $S \subseteq \ker(A)$ be a non-empty collection of vectors. This set realises a matroid $\MM(S)$ with circuits $\MC(S)$. We define the metric cone in terms of the circuits of this matroid. For example, if $123 \in \MC(S)$ is a circuit then it defines a triangle so we get three inequalities:
\[
|c_1|n_1 \le |c_2|n_2 + |c_3|n_3, \quad 
|c_2|n_2 \le |c_1|n_1 + |c_3|n_3, \quad
|c_3|n_3 \le |c_1|n_1 + |c_2|n_2.
\]
On the other hand if we have a larger circuit, like $1 \dots m$, then we get weaker inequalities:
\[
|c_i|n_i \le |c_1|n_1 + \dots + \widehat{|c_i|n_i} + \dots + |c_m|n_m \text{ for each } i \in [m]
\]
The reason why these inequalities are `weaker' is easily seen when $\MM(S)$ is a graphic matroid of a chordal graph $G$. In this case, if $C \in \MC(S)$ is a circuit, then it is cycle in $G$. Since $G$ is choral, the cycle admits a triangulation. The triangle inequalities arising from the triangles imply the inequalities derived from $C$.

So we obtain a metric cone $\MC_S \subseteq \RR^S$ defined by all the circuits of $\MM(S)$.

\begin{question}
    We ask the following questions about the distance reducing complex. 
\begin{itemize}
    \item When do two sets of vectors $S_1$ and $S_2$ define the same metric cone $\MC_{S_1} = \MC_{S_2}$?

    \item What is the structure of the distance reducing complex for monomial curve?

    \item How do the results for the $1$-norm, such as Theorems~\ref{thm: characterisation dist red first kind} and \ref{thm: dist red ci implies first kind}, generalise to other metrics?
\end{itemize}
\end{question}

{\bf Acknowledgments.} D. Kosta gratefully acknowledges funding from the Royal Society Dorothy Hodgkin Research Fellowship DHF$\backslash$R1$\backslash$201246. We thank the associated Royal Society Enhancement grant RF$\backslash$ERE$\backslash$210256  that supported  O. Clarke's postdoctoral research.

\bibliographystyle{plain}
\bibliography{references}

\end{document}